\makeatletter\pretocmd{\@seccntformat}{\S}{}{}\makeatother
\newtheorem{theorem}{Theorem}[section]
\newtheorem{corollary}[theorem]{Corollary}
\newtheorem{lemma}[theorem]{Lemma}
\newtheorem*{lemma*}{Lemma}
\newtheorem*{claim*}{Claim}
\newtheorem{proposition}[theorem]{Proposition}
\theoremstyle{definition}
\newtheorem*{definition-no}{Definition}
\DeclareMathOperator{\Mod}{Mod}
\DeclareMathOperator{\Iso}{Iso}
\begin{document}
\title[L\'opez-Escobar and the topological Vaught conjecture]{A
L\'opez-Escobar theorem for metric structures, and the topological Vaught
conjecture}
\author{Samuel Coskey}
\address{Samuel Coskey\\
Department of Mathematics\\
Boise State University\\
1910 University Dr.\\
Boise ID 83725-1555}
\email{scoskey@nylogic.org}
\urladdr{boolesrings.org/scoskey}
\author{Martino Lupini}
\address{Fakult\"{a}t f\"{u}r Mathematik, Universit\"{a}t Wien,
Oskar-Morgenstern-Platz 1, Room 02.126, 1090 Wien, Austria.}
\email{martino.lupini@univie.ac.at}
\urladdr{www.lupini.org}
\thanks{The second author is supported by the York University Elia Scholars
Program and by the York University Graduate Development Fund. Much of this
work was completed during a visit of the second author to Boise State
University in April 2014. The hospitality of the BSU Mathematics Department
is gratefully acknowledged.}
\dedicatory{}
\subjclass[2000]{Primary 03C95, 03E15; Secondary 54E50}
\keywords{Model theory for metric structures, infinitary logic, Polish group
action, Urysohn sphere}

\begin{abstract}
We show that a version of L\'{o}pez-Escobar's theorem holds in the setting
of model theory for metric structures. More precisely, let $\mathbb{U}$
denote the Urysohn sphere and let $\Mod(\mathcal{L},\mathbb{U})$ be the
space of metric $\mathcal{L}$-structures supported on $\mathbb{U}$. Then for
any $\Iso(\mathbb{U})$-invariant Borel function $f\colon \Mod(\mathcal{L}, 
\mathbb{U})\rightarrow \lbrack 0,1]$, there exists a sentence $\phi $ of $%
\mathcal{L}_{\omega _{1}\omega }$ such that for all $M\in \Mod(\mathcal{L},%
\mathbb{U})$ we have $f(M)=\phi ^{M}$. This answers a question of Ivanov and
Majcher-Iwanow. We prove several consequences, for example every orbit
equivalence relation of a Polish group action is Borel isomorphic to the
isomorphism relation on the set of models of a given $\mathcal{L }%
_{\omega_{1}\omega }$-sentence that are supported on the Urysohn sphere.
This in turn provides a model-theoretic reformulation of the topological
Vaught conjecture.
\end{abstract}

\maketitle

\section{Background and statement of main result\label{Section: background
and statement}}

A well-known theorem of L\'{o}pez-Escobar \cite%
{lopez-escobar_interpolation_1965} says roughly that every Borel class of
countable structures can be axiomatized by a sentence in the logic where
countable conjunctions and disjunctions are allowed. The theorem has been
generalized to apply to wider classes of structures, using sentences from a
variety of logics (see for example \cites
{tuuri_relative_1992,vaught_invariant_1974}).

To state L\'{o}pez-Escobar's theorem more precisely, let $\mathcal{L}$ be a
countable first-order language consisting of the relational symbols $%
\{R_{i}\}$ where each $R_{i}$ has arity $n_{i}$. The space $\Mod(\mathcal{L}%
) $ of countably infinite $\mathcal{L}$-structures is given by 
\begin{equation*}
\Mod(\mathcal{L})=\prod \mathcal{P}(\mathbb{N}^{n_{i}})\;,
\end{equation*}
and we note it is compact in the product topology. The space carries a
natural $S_{\infty }$-action by left-translation on each factor, and the $%
S_{\infty }$-orbits are precisely the isomorphism classes.

Next, recall that $\mathcal{L}_{\omega _{1}\omega}$ denotes the extension of
first-order logic in which countable conjunctions and disjunctions are
allowed (formulas are still only allowed to have finitely many free
variables). If $\phi $ is a sentence of $\mathcal{L}_{\omega _{1}\omega}$
then the subset $\Mod(\phi )\subset \Mod(\mathcal{L})$ consisting just of
the models of $\phi $ is clearly $S_{\infty }$-invariant (isomorphism
invariant), and it is easy to see that it is Borel. L\'opez-Escobar's
theorem states that the converse holds, that is if $A\subset \Mod(\mathcal{L}%
)$ is Borel and $S_{\infty }$-invariant, then there exists a sentence $\phi $
of $\mathcal{L}_{\omega _{1}\omega}$ such that $A=\Mod(\phi )$.

L\'{o}pez-Escobar's theorem has numerous applications. For instance, the 
\emph{Vaught conjecture} for $\mathcal{L}_{\omega _{1}\omega }$ states that
any set $\Mod(\phi )$ contains either just countably many nonisomorphic
structures or perfectly many nonisomorphic structures (we will make this
precise in the next section). More generally, the \emph{topological Vaught
conjecture for $S_{\infty }$} states that \emph{any} Borel action of $%
S_{\infty }$ has either countably or perfectly many orbits. It follows from L%
\'{o}pez-Escobar's theorem together with some standard facts about Polish
group actions that the topological Vaught conjecture for $S_{\infty }$ is
equivalent to the Vaught conjecture for $\mathcal{L}_{\omega_{1}\omega}$.

In \cite{ivanov_polish_2013}, the authors generalize numerous properties of
the space of countable discrete structures to spaces of separable complete
metric structures. They ask whether a version of L\'opez-Escobar's theorem
holds in the metric context. In this article we confirm that the natural
generalization of L\'{o}pez-Escobar's theorem to spaces of metric structures
supported on the Urysohn sphere holds. We use this result to derive several
corollaries, including an equivalence between the topological Vaught
conjecture and a Vaught conjecture for metric structures.

Before stating our result precisely, we begin with a brief introduction to
logic for metric structures. For a full account of this fruitful area, we
refer the reader to \cite{ben_yaacov_model_2008}. As in first-order logic,
in logic for metric structures a language $\mathcal{L}$ consists of function
symbols $f$ and relation symbols $R$, each with a finite arity $n_{f}$ or $%
n_{R}$. Additionally, to each function symbol $f$ or relation symbol $R$
there is a corresponding \emph{modulus of continuity} $\varpi _{f}$ or $%
\varpi _{R}\colon \mathbb{R}_{+}\rightarrow \mathbb{R}_{+}$ which is
continuous and vanishes at $0$. Now, an $\mathcal{L}$-structure $M$ consists
of a \emph{support}, which is a complete metric space (also denoted $M$),
together with interpretations of the function and relation symbols of $%
\mathcal{L}$. That is, for each function symbol $f$ we have a function $%
f^{M}\colon M^{n_{f}}\rightarrow M$ which is uniformly continuous with
modulus of continuity $\varpi _{f}$: 
\begin{equation*}
d\left( f^{M}\left( \bar{a}\right) ,f^{M}\left( \bar{b}\right) \right) \leq
\varpi _{f}\left( d\left( \bar{a},\bar{b}\right) \right)
\end{equation*}%
(Here, as with all finite products, we consider the maximum metric on $%
M^{n_{f}}$.) Similarly, for each relation symbol $R$ we have a function $%
R^{M}\colon M^{n_{R}}\rightarrow \left[ 0,1\right] $ which is uniformly
continuous with modulus $\varpi _{R}$. 

We now briefly discuss the syntax of logic for metric structures. Given a
language $\mathcal{L}$, we define the formulas of $\mathcal{L}$ as follows.
The terms and atomic formulas are defined in the usual way, except that
instead of the $=$ symbol, we include a binary function symbol $d$ which is
always interpreted as the metric. The \emph{connectives} are continuous
functions $h\colon[0,1]^n\to[0,1]$, so if $\phi _{0},\ldots ,\phi _{n-1}$
are formulas and $h$ is such a function then $h\left( \phi _{0},\ldots ,\phi
_{n-1}\right) $ is a formula. The \emph{quantifiers} are $\sup$ and $\inf$,
so if $\phi$ is a formula and $x$ is a variable, then $\inf_{x}\phi $ and $%
\sup_{x}\phi $ are formulas.

For our generalization of L\'{o}pez-Escobar's theorem, we will use the
infinitary language $\mathcal{L}_{\omega _{1}\omega }$ in the metric setting
as defined in \cite[Theorem~1.1]{ben_yaacov_model_2009}. (Other infinitary
logics for metric structures are studied in \cite{eagle_omitting_2014}, and 
\cite{sequeira_infinitary_2013}.) Here, if $\phi _{n}$ is a sequence of $%
\mathcal{L}_{\omega _{1}\omega }$-formulas (with finitely many free
variables among them all), then $\inf_{n}\phi _{n}$ and $\sup_{n}\phi _{n}$
are $\mathcal{L}_{\omega _{1}\omega }$-formulas provided that the sequence
of uniform continuity moduli is itself uniformly bounded. Every $\mathcal{L}%
_{\omega _{1}\omega }$-formula $\phi $ has a corresponding modulus of
continuity $\varpi _{\phi }$, defined by recursion on the complexity of $%
\phi $.

Note that if $\phi$ is a sentence of $\mathcal{L}_{\omega_1\omega}$ and $M$
is an $\mathcal{L} $-structure, then $\phi^M$ is naturally interpreted as an
element of $[0,1]$. Intuitively the value $0$ means that $\phi$ is certainly
true in $M$, and larger values give shades of grey truth. Thus the
evaluation map $M\mapsto\phi^M$ is an example of a \emph{grey set}.

Grey sets, originally named graded sets, were introduced in \cite%
{ben_yaacov_grey_2014} and used extensively in \cite{ivanov_polish_2013}. If 
$X$ is a topological space then $A$ is said to be a \emph{grey subset} of $X$%
, written $A\sqsubseteq X$, if $A$ is a function $X\rightarrow \lbrack 0,1]$%
. The sets $A_{<r}=\{x\in X\mid A(x)<r\}$ and $A_{\leq r}=\{x\in X\mid
A(x)\leq r\}$ are called the \emph{level sets} of $A$. The terminology of
grey set arises from the idea that asking whether $x\in A_{<r}$ is not a
black-and-white question but rather one which depends on the parameter $r\in
\lbrack 0,1]$.

It is possible to generalize a number of concepts from point-set topology
and descriptive set theory to grey sets. For example, $A\sqsubseteq X$ is
said to be \emph{open} if $A_{<r}$ is open for all $r$ ($A$ is upper
semicontinuous), and \emph{closed} if $A_{\leq r}$ is closed for all $r$ ($A$
is lower semicontinuous); see \cite[Definition 1.4]{ben_yaacov_grey_2014}.
More generally one can define the Borel classes $\mathbf{\Sigma }_{\alpha
}^{0}$ and $\mathbf{\Pi }_{\alpha }^{0}$ of Borel grey subsets of $X$ by
induction on $\alpha \in \omega _{1}$ as in \cite[Section 2.1]%
{ivanov_polish_2013}:

\begin{itemize}
\item $A\in \mathbf{\Sigma }_{1}^{0}$ iff $A$ is an open grey subset of $X$;

\item $A\in \mathbf{\Pi }_{\alpha }^{0}$ iff $1-A\in \mathbf{\Sigma }%
_{\alpha }^{0}$; and

\item $A\in \mathbf{\Sigma }_{\alpha }^{0}$ iff $A=\inf_{n}A_{n}$ where $%
A_{n}\in \bigcup_{\beta <\alpha }\mathbf{\Pi }_{\beta }^{0}$.
\end{itemize}

We then say $A\sqsubseteq X$ is \emph{Borel} if it belongs to $\mathbf{\
\Sigma }_{\alpha }^{0}$ for some $\alpha <\omega _{1}$, and by \cite[%
Theorem~24.3]{kechris_classical_1995} $A$ is Borel iff it is Borel as a
function $X\rightarrow \left[ 0,1\right] $. Continuing up the projective
hierarchy, a grey subset $A$ of $X$ is \emph{analytic }if there is a Borel
grey subset $B\sqsubseteq X\times Y$ for some Polish space $Y$ such that $%
A=\inf_{y}B$, \emph{i.e.},\ for every $x\in X$ 
\begin{equation*}
A(x)=\inf_{y\in Y}B(x,y)\text{.}
\end{equation*}%
It is not difficult to verify that $A$ is analytic iff the level sets $%
A_{<r} $ are analytic for all $r\in \mathbb{Q}$. Similarly, $A$ is \emph{%
coanalytic} iff $1-B$ is analytic, or equivalently $B_{\leq r}$ is
coanalytic for every $r\in \mathbb{Q}$.

We now return to our motivating example of the evaluation map for a given
sentence. Fix a separable complete metric space $Y$, and denote by $\Iso(Y)$
the group of isometries of $Y$ (it is a Polish group with respect to the
topology of pointwise convergence). As with countable discrete structures,
there is naturally a space $\Mod(\mathcal{L},Y)$ of $\mathcal{L}$-structures
having $Y$ as support: 
\begin{equation*}
\Mod(\mathcal{L},Y)=\prod \mathrm{Unif}_{\varpi _{f}}(Y^{n_{f}},Y)\times
\prod \mathrm{Unif}_{\varpi _{R}}(Y^{n_{R}},[0,1])
\end{equation*}%
Here $\mathrm{Unif}_{\varpi }(A,B)$ denotes the space of $\varpi $-uniformly
continuous functions from $A$ to $B$ with the topology of pointwise
convergence. Then $\Mod(\mathcal{L},Y)$ is easily seen to be a Polish $\Iso%
(Y)$-space with respect to the natural action of $\Iso(Y)$. Now if $\phi $
is an $\mathcal{L}_{\omega _{1}\omega }$-sentence we can define the \emph{%
evaluation map} $E_{\phi }\sqsubseteq \Mod(\mathcal{L},Y)$ by 
\begin{equation*}
E_{\phi }(M)=\phi ^{M}\text{.}
\end{equation*}%
More generally if $\phi \left( \bar{x}\right) $ is an $\mathcal{L}_{\omega
_{1}\omega }$-formula with $n$-free variables we can define the evaluation
map $E_{\phi }\sqsubseteq \Mod(\mathcal{L},Y)\times Y^{n}$ by 
\begin{equation*}
E_{\phi }\left( M,u\right) =\phi ^{M}(u)\text{.}
\end{equation*}
It is not difficult to verify that the evaluation function $E_{\phi }$ for a
formula of $\mathcal{L}_{\omega _{1}\omega }$ is always Borel (see
Proposition~\ref{Proposition: grey quantifiers}).

This brings us to our main result, which asserts that \emph{any} grey subset
of $\Mod(\mathcal{L},\mathbb{U})$ which is Borel and $\Iso(\mathbb{U})$%
-invariant arises as an evaluation $E_{\phi }$. Here $\mathbb{U}$ denotes
the \emph{Urysohn sphere}, which is the unique metric space that is
separable, complete, ultrahomogeneous, with metric bounded by $1$, and which
contains an isometric copy of any other separable metric space with metric
bounded by $1$. A survey of the remarkable properties of the Urysohn sphere
can be found in \cite{melleray_geometric_2008}.

\begin{theorem}
\label{thm:main}For every $\Iso(\mathbb{U})$-invariant Borel grey subset $A$
of $\Mod(\mathcal{L},\mathbb{U})$ there exists a sentence $\phi $ of $%
\mathcal{L}_{\omega _{1}\omega }$ such that for all $M\in \Mod(\mathcal{L},%
\mathbb{U})$ we have $A(M)=\phi ^{M}$.
\end{theorem}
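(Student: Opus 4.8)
The plan is to mimic the classical proof of L\'opez-Escobar's theorem, which proceeds by transfinite induction on the Borel rank of the invariant set (here, grey set) $A$, while at each stage replacing ordinary quantification over elements by the $\inf$ and $\sup$ quantifiers of the metric $\mathcal{L}_{\omega_1\omega}$, and crucially using the ultrahomogeneity of $\mathbb{U}$ together with the universal property that every finite (indeed separable) metric space of diameter $\le 1$ embeds into $\mathbb{U}$. The first step is to set up the right inductive hypothesis: rather than proving the statement only for sentences, one proves the stronger statement for grey subsets of $\Mod(\mathcal{L},\mathbb{U})\times \mathbb{U}^n$ that are invariant under the diagonal action of $\Iso(\mathbb{U})$, producing an $\mathcal{L}_{\omega_1\omega}$-formula $\phi(\bar x)$ with $A(M,\bar u)=\phi^M(\bar u)$ on a comeager (or all) set of tuples. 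This is the standard strengthening that makes the quantifier step go through.

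Next I would prove the base case and the closure steps. For the base case one needs that \emph{open} invariant grey sets (upper semicontinuous functions whose sublevel sets are open and invariant) are captured by formulas: here the key observation is that a basic open set in $\Mod(\mathcal{L},\mathbb{U})$ is determined by finitely many constraints of the form ``$R^M(\bar a)$ is close to some value'' or ``$f^M(\bar a)$ is close to some point'', i.e.\ by the quantifier-free type of a finite tuple $\bar a$ together with the metric data on $\bar a$; invariance lets us existentially quantify out the parameters $\bar a$ using $\inf$, and the countable basis lets us take a countable $\sup$ over the basic pieces, giving a $\mathbf{\Sigma}^0_1$ (open) grey formula. The $\mathbf{\Pi}^0_\alpha$ step is just $\phi\mapsto 1-\phi$, which is a legitimate connective. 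The $\mathbf{\Sigma}^0_\alpha$ step is the countable $\inf$ over formulas obtained from lower-rank pieces by induction, and one must check the uniform-boundedness-of-moduli side condition from the definition of metric $\mathcal{L}_{\omega_1\omega}$ — this is where the hypothesis that $A$ is Borel in the grey sense (so built from the $\mathbf{\Sigma}^0_\alpha/\mathbf{\Pi}^0_\alpha$ hierarchy with controlled moduli) must be matched against the syntactic side condition, and I expect to need a normalization lemma truncating/rescaling the $\phi_n$ so their moduli stay uniformly bounded without changing the relevant level sets.

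The main obstacle, and the step requiring the most care, is the quantifier step: passing from a formula $\psi(\bar x, y)$ representing an invariant grey set on $\Mod(\mathcal{L},\mathbb{U})\times\mathbb{U}^{n+1}$ to a formula $\inf_y\psi$ or $\sup_y\psi$ representing the corresponding projected/invariant grey set on $\Mod(\mathcal{L},\mathbb{U})\times\mathbb{U}^n$. In the discrete case this works because, given a countable structure $M$ on $\mathbb{N}$ and a tuple $\bar a$, \emph{every} element $b\in M$ is available to witness an existential, and by homogeneity of $S_\infty$ on tuples the truth value depends only on the isomorphism type. In the metric case the analogue is: given $M\in\Mod(\mathcal{L},\mathbb{U})$ and $\bar u\in\mathbb{U}^n$, the values $\psi^M(\bar u, v)$ as $v$ ranges over $\mathbb{U}$ realize, up to $\Iso(\mathbb{U})$-invariance, exactly the values determined by the quantifier-free metric types over $\bar u$ that are consistent with $M$; ultrahomogeneity of $\mathbb{U}$ ensures that any such abstract type over $\bar u$ is actually realized by a genuine point $v\in\mathbb{U}$, so that $\inf_{v\in\mathbb{U}}\psi^M(\bar u,v)$ genuinely computes the value of the projected grey set. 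I expect to phrase this via a back-and-forth / type-realization argument, using density of $\mathbb{U}$ and uniform continuity of all interpretations to reduce to finitely many parameters, and I anticipate one has to be slightly careful that the resulting value depends Borel-measurably (indeed as a grey set of the right rank) on $M$ — but this follows from Proposition~\ref{Proposition: grey quantifiers} on the Borelness of evaluation maps. Assembling these steps by transfinite recursion, with the formula for $A$ at rank $\alpha$ built from the formulas for its lower-rank constituents, completes the proof.
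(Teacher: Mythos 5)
There is a genuine gap at the heart of your induction. You propose to induct on the Borel rank of the \emph{invariant} grey set $A$ (strengthened to diagonally invariant grey subsets of $\Mod(\mathcal{L},\mathbb{U})\times\mathbb{U}^{n}$), and at the $\mathbf{\Sigma}^{0}_{\alpha}$ step to take a countable $\inf$ of formulas ``obtained from lower-rank pieces by induction.'' But if $A=\inf_{n}A_{n}$ is invariant, the constituents $A_{n}$ of lower rank need not be invariant in any sense -- not under $\Iso(\mathbb{U})$, and not under the diagonal action with parameters either -- so the inductive hypothesis simply does not apply to them. This is exactly the obstruction that Vaught's proof of the classical theorem was designed to overcome, and it is why the paper does not induct on invariant sets at all: it proves the stronger Theorem~\ref{thm:parameters}, which attaches a formula not to $A$ itself but to the Vaught-transform-like quantity $\sup^{\ast}_{g\in\Iso(\mathbb{U})}[A(gM)-Nd(g^{-1}\mathbf{p}_{|k},u)]$ for an \emph{arbitrary} Borel grey set $A$; invariance is used only at the very end, when $k=0$, to collapse the transform back to $A$. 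Closure of the class of ``good'' sets under negation, truncated linear combinations and countable $\sup/\inf$ is then established for the transforms (Lemmas~\ref{Lemma: negation transform}--\ref{Lemma: sup and inf in B}), and instead of a rank induction the paper invokes a grey functional monotone-class argument (Lemma~\ref{Lemma: class Borel grey sets}) seeded by a countable point-separating family of quantifier-free evaluations at the parameters $\mathbf{p}_{|n}$. Your proposal contains no mechanism for producing formulas from non-invariant pieces, so the recursion cannot be assembled as described.

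Two further points where your sketch gestures at real issues but lacks the mechanism the paper supplies. First, the quantifier/type-realization step you worry about is genuinely where ultrahomogeneity enters, but in the transform formulation the precise tool is the perturbation Lemma~\ref{Lemma: perturbation}: a tuple $w$ with $\tau_{k}(w,\mathbf{p}_{|k})$ small can be moved by an isometry close to the orbit $\mathcal{O}(\mathbf{p}_{|k})$, which is what lets one trade the category quantifier over a basic grey open subset $[u]$ of $\Iso(\mathbb{U})$ for an honest $\inf_{\bar y}$ together with the penalty terms $m\tau_{\widetilde k}(\bar y,\mathbf{p}_{|\widetilde k})+Nd(\bar y_{|k},\bar x)$; merely saying ``types over $\bar u$ consistent with $M$ are realized'' does not produce a formula, because consistency with $M$ is not itself expressible without the transform bookkeeping. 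Second, you correctly flag the uniform-modulus side condition for infinitary $\inf/\sup$, but the fix is not an ad hoc truncation lemma: the Lipschitz penalty $Nd(g^{-1}\mathbf{p}_{|k},u)$ built into Theorem~\ref{thm:parameters} forces every formula produced at every stage to be $N$-Lipschitz, so all countable $\inf$s and $\sup$s automatically satisfy the uniform-continuity requirement of metric $\mathcal{L}_{\omega_{1}\omega}$. Without some analogue of both devices your plan fails already at the first $\mathbf{\Sigma}^{0}_{2}$ invariant grey set.
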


As an immediate consequence of Theorem~\ref{thm:main}, we obtain a L\'{o}%
pez-Escobar theorem for black-and-white sets as well. Let us say that an $%
\mathcal{L}_{\omega _{1}\omega }$-sentence $\phi $ is \emph{$\left\{
0,1\right\} $-valued} if $\phi ^{M}\in \left\{ 0,1\right\} $ for every $M\in %
\Mod(\mathcal{L},\mathbb{U})$. For such sentences $\phi $ we define $\Mod(%
\mathcal{L},\mathbb{U},\phi )$ to be the set of $M\in \Mod(\mathcal{L},%
\mathbb{U})$ such that $\phi ^{M}=0$.

\begin{corollary}
\label{Corollary: main}For every $\Iso(\mathbb{U})$-invariant Borel subset $%
A $ of $\Mod(\mathcal{L},\mathbb{U})$ there exists a sentence $\phi $ of $%
\mathcal{L}_{\omega _{1}\omega }$ such that $\phi $ is $\left\{ 0,1\right\} $%
-valued and $A=\Mod(\mathcal{L},\mathbb{U},\phi )$.
\end{corollary}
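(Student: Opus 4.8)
The plan is to derive the corollary directly from Theorem~\ref{thm:main}, applied not to (the indicator of) $A$ itself but to the indicator of its complement. Recall the convention that $\phi^{M}=0$ is read as ``$\phi$ holds in $M$'', so that $\Mod(\mathcal{L},\mathbb{U},\phi)$ is the zero set of the evaluation map $E_{\phi}$. Accordingly, given an $\Iso(\mathbb{U})$-invariant Borel subset $A\subseteq\Mod(\mathcal{L},\mathbb{U})$, I would define $B\sqsubseteq\Mod(\mathcal{L},\mathbb{U})$ by
\begin{equation*}
B(M)=\begin{cases}0 & \text{if }M\in A,\\ 1 & \text{if }M\notin A,\end{cases}
\end{equation*}
that is, $B=1-\chi_{A}$ is the characteristic function of $\Mod(\mathcal{L},\mathbb{U})\setminus A$.

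First I would verify that $B$ is an $\Iso(\mathbb{U})$-invariant Borel grey subset. Invariance is immediate, since $A$, and hence its complement, is $\Iso(\mathbb{U})$-invariant, so $\chi_{A^{c}}$ is a fortiori invariant. For Borelness, note that $B\colon\Mod(\mathcal{L},\mathbb{U})\to[0,1]$ is a Borel function because $A$ is a Borel set; by the equivalence recorded just before the statement of Theorem~\ref{thm:main} (via \cite[Theorem~24.3]{kechris_classical_1995}), being a Borel function into $[0,1]$ is exactly what it means to be a Borel grey subset. Then I would apply Theorem~\ref{thm:main} to $B$ to obtain a sentence $\phi$ of $\mathcal{L}_{\omega_{1}\omega}$ with $\phi^{M}=B(M)$ for every $M\in\Mod(\mathcal{L},\mathbb{U})$. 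Since $B$ takes only the values $0$ and $1$, the same is true of $M\mapsto\phi^{M}$, so $\phi$ is $\{0,1\}$-valued; and $\phi^{M}=0$ if and only if $B(M)=0$ if and only if $M\in A$, so $A=\Mod(\mathcal{L},\mathbb{U},\phi)$, as required.

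There is essentially no substantive obstacle here, since the corollary is a formal specialization of the main theorem; the only points that call for any attention are the bookkeeping of the ``true $=0$'' convention, which is precisely why one passes to the complement of $A$, and the observation that the indicator function of an invariant Borel set is genuinely a Borel grey set in the sense of the inductively defined $\mathbf{\Sigma}^{0}_{\alpha}$/$\mathbf{\Pi}^{0}_{\alpha}$ hierarchy, which is exactly the content of the cited classical fact.
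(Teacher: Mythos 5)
Your proof is correct and is exactly the argument the paper intends (the paper simply calls the corollary an immediate consequence of Theorem~\ref{thm:main}): apply the theorem to the zero-indicator of $A$, i.e.\ the invariant Borel grey set equal to $0$ on $A$ and $1$ off $A$, and read off that the resulting sentence is $\{0,1\}$-valued with zero set $A$.
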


It is natural to ask whether the results hold with the Urysohn sphere
replaced by another space $Y$. We remark that our proof applies if $Y$ is
any approximately ultrahomogeneous, complete, separable metric space with a
dense sequence $p_{n}$ satisfying the property: For every $n$, the $\Iso(Y)$%
-orbit of $\left( p_{0},\ldots ,p_{n-1}\right) $ is definable in $Y^{n}$ in
the sense of \cite[Definition~9.16]{ben_yaacov_model_2008}. To see this,
note that one can use \cite[Proposition~9.19]{ben_yaacov_model_2008} to
prove a suitable modification of Lemma~\ref{Lemma: perturbation}.

After the first version of this paper had been posted on the Arxiv, a L\'{o}%
pez-Escobar theorem for metric structures structures was also announced by
Ben Yaacov, Nies, and Tsankov; see \cite{ben_yaacov_lopez-escobar_2014}.
While we work in the parametrization of $\mathcal{L}$-structures supported
on $\mathbb{U}$ considered in \cite{ivanov_polish_2013}, and for which the
question of Ivanov and Majcher-Iwanow was formulated, the authors of \cite%
{ben_yaacov_lopez-escobar_2014} consider a different parametrization of
arbitrary separable $\mathcal{L}$-structures with a distinguished countable
dense subset, which are coded by the sequences of values of all the
predicates on such a subset.

This article is organized as follows. In Section~\ref{Section: consequences}
we present several consequences of Theorem~\ref{thm:main} and Corollary~\ref%
{Corollary: main}. For example, we show that the topological Vaught
conjecture is equivalent to the natural formulation of the model-theoretic
Vaught Conjecture in the context of model theory for metric structures. In
Section~\ref{Section: stronger statement} we introduce some technical
components of the proof and state a theorem that is stronger than the main
result. Finally, in Section~\ref{Section: proof} we prove this stronger
theorem.

\textbf{Acknowledgement}. We would like to thank Ita\"{\i} Ben Yaacov,
Ilijas Farah, Bradd Hart, Luca Motto Ros, and Todor Tsankov for their
comments and suggestions on earlier drafts of this paper.

\section{Consequences of the main result\label{Section: consequences}}

In this section we show that several standard applications of L\'{o}%
pez-Escobar's theorem can be generalized to the setting of logic and model
theory for metric structures.

Our first corollary is the existence of a Scott sentence that axiomatizes a
single isomorphism class of structures (see for instance \cite[Theorem~12.1.8%
]{gao_invariant_2009}, or \cite[Theorem 4.2]{sequeira_infinitary_2013} for a
metric version). Since the orbits of a Polish group action are always Borel
(see \cite[Proposition~3.1.10]{gao_invariant_2009}) the following result is
an immediate consequence of Corollary~\ref{Corollary: main}.

\begin{corollary}
For every $\mathcal{L}$-structure $M$ in $\Mod(\mathcal{L},\mathbb{U})$
there is a sentence $\phi $ of $\mathcal{L}_{\omega _{1}\omega }$ such that $%
\phi $ is $\left\{ 0,1\right\} $-valued, and for any $N\in \Mod(\mathcal{L},%
\mathbb{\ U})$ we have: 
\begin{equation*}
\phi ^{N}=0\iff M\cong N
\end{equation*}
\end{corollary}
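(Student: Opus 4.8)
The plan is to express the isomorphism class of $M$ as an $\Iso(\mathbb{U})$-invariant Borel subset of $\Mod(\mathcal{L},\mathbb{U})$ and then to apply Corollary~\ref{Corollary: main}.

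First I would check that for $M,N\in\Mod(\mathcal{L},\mathbb{U})$ one has $M\cong N$ exactly when $N$ lies in the $\Iso(\mathbb{U})$-orbit of $M$. This is essentially a matter of unwinding definitions: since $M$ and $N$ have the same support $\mathbb{U}$, an isomorphism $M\to N$ of $\mathcal{L}$-structures is the same thing as a surjective isometry $g$ of $\mathbb{U}$ that intertwines the interpretations of all function and relation symbols of $\mathcal{L}$, and this last condition says precisely that $g\cdot M=N$ for the natural action of $\Iso(\mathbb{U})$ on $\Mod(\mathcal{L},\mathbb{U})$. Thus the isomorphism class $A=\{N\in\Mod(\mathcal{L},\mathbb{U}):N\cong M\}$ equals the orbit $\Iso(\mathbb{U})\cdot M$, so in particular $A$ is $\Iso(\mathbb{U})$-invariant. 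Next, recalling from Section~\ref{Section: background and statement} that $\Mod(\mathcal{L},\mathbb{U})$ is a Polish $\Iso(\mathbb{U})$-space, and that every orbit of a continuous Polish group action on a Polish space is Borel by \cite[Proposition~3.1.10]{gao_invariant_2009}, we conclude that $A$ is Borel.

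Finally I would feed the $\Iso(\mathbb{U})$-invariant Borel set $A$ into Corollary~\ref{Corollary: main} to obtain a $\{0,1\}$-valued sentence $\phi$ of $\mathcal{L}_{\omega_1\omega}$ with $A=\Mod(\mathcal{L},\mathbb{U},\phi)$; for such $\phi$ and any $N\in\Mod(\mathcal{L},\mathbb{U})$ we then have $\phi^N=0\iff N\in A\iff N\cong M$, which is the claim. Every step here is either a definitional check or an invocation of an already established result, so no genuine obstacle remains; the only place warranting a moment's attention is the identification of the isomorphism class with the $\Iso(\mathbb{U})$-orbit, which hinges on all the structures involved literally sharing the support $\mathbb{U}$.
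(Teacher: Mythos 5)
Your proposal is correct and matches the paper's own argument: the paper likewise identifies the isomorphism class of $M$ with its $\Iso(\mathbb{U})$-orbit, invokes \cite[Proposition~3.1.10]{gao_invariant_2009} to see that this orbit is Borel, and then applies Corollary~\ref{Corollary: main}. Your explicit check that isomorphisms between structures sharing the support $\mathbb{U}$ are exactly elements of $\Iso(\mathbb{U})$ is the same point the paper leaves implicit.
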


Next, recall that in Section~\ref{Section: background and statement} we
observed that if $\phi $ is an $\mathcal{L}_{\omega _{1}\omega }$-sentence
then the evaluation function $E_{\phi }$ is an $\Iso(\mathbb{U})$-invariant
Borel grey subset of $\Mod(\mathcal{L},\mathbb{U})$. In particular the
subspace $\Mod(\mathcal{L},\mathbb{U},\phi )$ of $\Mod(\mathcal{L},\mathbb{U}%
)$ consisting of just those $M$ with $\phi ^{M}=0$ is a standard Borel $\Iso(%
\mathbb{U})$-space. The next theorem will say that any standard Borel $\Iso(%
\mathbb{U})$-space is isomorphic to an $\Iso(\mathbb{U})$-space of this form.

First recall that if $E,F$ are equivalence relations on standard Borel
spaces $X,Y$, then $E$ is \emph{Borel reducible} to $F$ if there is a Borel
function $f\colon X\rightarrow Y$ such that for $x,x^{\prime }\in X$, 
\begin{equation*}
x\mathrel{E}x^{\prime }\iff f(x)\mathrel{F}f(x^{\prime })\text{.}
\end{equation*}%
If moreover such an $f$ can be taken to be a Borel isomorphism from $X$ to $%
Y $, then the equivalence relations $E$ and $F$ are said to be \emph{Borel
isomorphic}.

The following result implies that every orbit equivalence relation of a
Polish group action is Borel isomorphic to the isomorphism relation on some $%
\Mod(\mathcal{L},\mathbb{U},\phi )$. In the statement, we say that a
functional or relational symbol is \emph{$1$-Lipschitz} if its modulus of
continuity is (bounded above by) the function $f(t)=t$.

\begin{theorem}
\label{thm:equivariant}Let $\mathcal{L}$ be a relational countable language
for continuous logic containing $1$-Lipschitz symbols of unbounded arity.
Suppose that $G$ is a Polish group. If $X$ is a standard Borel $G$-space
then there exists an $\mathcal{L}_{\omega _{1}\omega }$-sentence $\phi $, a
continuous group monomorphism $\Phi \colon G\rightarrow \Iso(\mathbb{U})$,
and a Borel injection $f\colon X\rightarrow \Mod(\mathcal{L},\mathbb{U})$
such that:

\begin{itemize}
\item $\phi $ is $\left\{ 0,1\right\} $-valued;

\item $\mathop{\mathrm{rng}}(f)=\Mod(\mathcal{L},\mathbb{U},\phi )$;

\item $f$ maps distinct $G$-orbits into distinct $\Iso(\mathbb{U})$-orbits;
and

\item $f$ is $\Phi $-equivariant, that is, for all $x\in X$ and $g\in G$ we
have $f(gx)=\Phi (g)f(x)$.
\end{itemize}
\end{theorem}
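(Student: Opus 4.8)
The plan is to build the structures on $\mathbb{U}$ by fattening the given $G$-space $X$ into a metric structure so that the point stabilizers collapse to the $G$-orbit equivalence relation, then to invoke Corollary~\ref{Corollary: main} to axiomatize the range. First I would use a classical result on Polish group actions (Becker--Kechris): every Polish group $G$ embeds as a closed subgroup of $\Iso(\mathbb{U})$ via a continuous monomorphism $\Phi$, and moreover every standard Borel $G$-space $X$ admits a Borel $G$-embedding into a Polish $G$-space; combining this with the universality of the Urysohn sphere, one gets a continuous action of $G$ on $\mathbb{U}$ and a Borel $\Phi$-equivariant injection of $X$ into some Polish $\Iso(\mathbb{U})$-space. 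The subtle point is that a $\Phi$-equivariant map only guarantees that $G$-orbits land inside single $\Iso(\mathbb{U})$-orbits; to get the third bullet (distinct $G$-orbits go to \emph{distinct} $\Iso(\mathbb{U})$-orbits) I would need the image points to have $\Iso(\mathbb{U})$-orbits that meet the image in exactly one $G$-orbit. This is where the hypothesis on $\mathcal{L}$ — relational, with $1$-Lipschitz symbols of unbounded arity — enters: using predicates of high arity one can encode, into the structure $f(x)$, the entire orbit map $g\mapsto gx$ in a way that is invariant under $G$ but rigid enough that an isometry of $\mathbb{U}$ carrying $f(x)$ to $f(x')$ must arise from some $g\in G$ with $gx=x'$.

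Concretely, I would fix a countable dense sequence $(p_n)$ in $\mathbb{U}$ and a Borel map $x\mapsto (a^x_n)$ picking, for each $x\in X$, a dense sequence in the orbit closure $\overline{Gx}\subseteq\mathbb{U}$ (after transporting $X$ into $\mathbb{U}$ via the equivariant embedding above), arranged so that $a^{gx}_n = \Phi(g)\cdot a^x_n$ up to reindexing. Then for each finite tuple of indices I introduce a $1$-Lipschitz predicate whose interpretation in $f(x)$ records the distances $d(a^x_{i_1},a^x_{i_2},\dots)$ and, crucially, a "type code" for the finite configuration, chosen so that the full collection of predicate values determines the isometry type of the marked sequence $(a^x_n)$ inside $\mathbb{U}$ and hence determines $\overline{Gx}$ with its marked dense sequence. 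Because $\mathbb{U}$ is ultrahomogeneous, two such structures $f(x)$, $f(x')$ are isomorphic iff there is a surjective isometry $\overline{Gx}\to\overline{Gx'}$ matching the marked sequences, and by the equivariance of the construction this forces an element of $G$ witnessing $x\mathbin{E_G}x'$. The unbounded arity is needed so that arbitrarily long finite sub-configurations can be encoded, which is what pins down the infinite marked sequence (and not merely finite approximations).

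Having produced the Borel $\Phi$-equivariant injection $f\colon X\to\Mod(\mathcal{L},\mathbb{U})$ with the orbit-separation property, the set $A=\mathrm{rng}(f)$ is $\Iso(\mathbb{U})$-invariant (by $\Phi$-equivariance together with the fact that off the image no structure of the prescribed "code" form can sit, which one arranges by also recording in the predicates a flag identifying genuine codes) and Borel (the image of a Borel injection between standard Borel spaces is Borel, by Lusin--Souslin). Then Corollary~\ref{Corollary: main} supplies a $\{0,1\}$-valued $\mathcal{L}_{\omega_1\omega}$-sentence $\phi$ with $A=\Mod(\mathcal{L},\mathbb{U},\phi)$, giving all four bullets.

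The main obstacle I anticipate is the third bullet: ensuring that the encoding is simultaneously $G$-invariant as a \emph{function of $x$} and rigid enough to recover $g$ from an isometry of the ambient space. One has to be careful that enlarging the orbit $Gx$ to its closure $\overline{Gx}$ does not create "extra" isometries between different closures, and that the Borel choice of the marked dense sequence $(a^x_n)$ can be made uniformly; handling this cleanly is likely to require a perturbation/back-and-forth argument in $\mathbb{U}$ in the spirit of Lemma~\ref{Lemma: perturbation}, exploiting that $\Iso(\mathbb{U})$-orbits of finite tuples are definable. The remaining steps — verifying Borelness, invoking Lusin--Souslin, and applying Corollary~\ref{Corollary: main} — are routine.
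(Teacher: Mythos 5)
Your scaffolding (Uspenskij embedding, a Borel coding into structures on $\mathbb{U}$, Lusin--Souslin for Borelness of the range, then Corollary~\ref{Corollary: main}) matches the paper's, but the heart of your argument --- the claim that a high-arity coding of a marked dense sequence in the orbit closure is ``rigid enough that an isometry of $\mathbb{U}$ carrying $f(x)$ to $f(x')$ must arise from some $g\in G$'' --- is a genuine gap, and it cannot be repaired within your framework. Whatever $1$-Lipschitz predicates you attach to the marked points $(a^x_n)$, an isomorphism $f(x)\cong f(x')$ is witnessed by an arbitrary $\psi\in\Iso(\mathbb{U})$ matching the marked configurations, and nothing forces $\psi\in\Phi(G)$. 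Concretely, let $G=\mathbb{Z}$ act on a circle $C\subset\mathbb{U}$ by an irrational rotation (extended to $\mathbb{U}$ via $\Phi$): every orbit closure equals $C$, the marked sequences $(x+n\alpha)_n$ and $(x'+n\alpha)_n$ have identical distance configurations and types, and the rotation by $x'-x$ --- which extends to an isometry of $\mathbb{U}$ but in general lies outside $\Phi(\mathbb{Z})$ --- matches them index by index, so $f(x)\cong f(x')$ even for $x,x'$ in different $G$-orbits. Your ``type codes'' cannot see this, since the two configurations are isometric. Relatedly, your second bullet fails: $\Mod(\mathcal{L},\mathbb{U},\phi)$ is isomorphism-invariant, so $\mathrm{rng}(f)$ must be closed under the \emph{full} $\Iso(\mathbb{U})$-action, which a merely $\Phi$-equivariant injection of an abstract $G$-space cannot provide; a ``flag'' predicate does not help, because flags are isomorphism-invariant and hence also hold of $hf(x)$ for $h\notin\Phi(G)$, while $hf(x)$ need not lie in the range. (Also, there is in general no $\Phi$-equivariant Borel injection of $X$ into $\mathbb{U}$ itself, so ``$\overline{Gx}\subseteq\mathbb{U}$'' is not available as stated.)

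The missing idea is the paper's reduction to the case $G=\Iso(\mathbb{U})$ \emph{before} any coding, which makes both problematic bullets automatic. Namely, combine Uspenskij's theorem with the Mackey--Hjorth extension theorem \cite[Theorem~3.5.2]{gao_invariant_2009} (a Polish $G$-space extends to a Polish $H$-space in which each $H$-orbit contains exactly one $G$-orbit, for $G$ closed in $H$) and the Becker--Kechris universality of $F(G)^{\omega}$ \cite[Theorem~3.3.4]{gao_invariant_2009}, to assume $G=\Iso(\mathbb{U})$ and $X=F(\mathbb{U}^{\omega})^{\omega}$ with the full isometry group acting. Then the coding $F\mapsto M_F$, interpreting $R_i^n$ as $\bar y\mapsto d(\bar y,A_i^n)$ where the closed sets $A_i^n\subset\mathbb{U}^n$ are the tree levels of $F_i$, is an injective $\Iso(\mathbb{U})$-equivariant Borel map; injectivity plus equivariance under the \emph{same} group gives orbit separation for free, and equivariance from the whole space gives an $\Iso(\mathbb{U})$-invariant range, so Corollary~\ref{Corollary: main} applies to it. The unbounded-arity $1$-Lipschitz symbols are used exactly for these tree levels; the rigidity you tried to engineer is replaced by enlarging the acting group so that no rigidity is needed.
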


\begin{proof}
Arguing as in the proof of \cite[Theorem 2.7.1(a)]{becker_descriptive_1996}
we can assume without loss of generality that $\mathcal{L}$ is the language
containing, for every $n\in \omega $, infinitely many $1$-Lipschitz symbols $%
\left( R_{i}^{n}\right) _{i\in \omega }$ of arity $n$. (This can be done by
replacing some $1$-Lipschitz symbols with $1$-Lipschitz symbols of higher
arity that do not depend on the extra coordinates.)

We now claim that we can suppose without loss of generality that $G=\Iso(%
\mathbb{U})$ and $X=F(G)^{\omega }$. Here, $F(G)$ denotes the space of
closed subsets of $G$ endowed with the Effros Borel structure \cite[Section
12.C]{kechris_classical_1995}, and $G$ acts coordinatewise on $X$ by the
left-shift. This claim follows from the following well-known facts:

\begin{itemize}
\item (Uspenskij \cite{uspenskij_universal_1986, uspenskij_group_1990}) $G$
is isomorphic to a closed subgroup of $\Iso(\mathbb{U})$.

\item (Mackey--Hjorth \cite[Theorem~3.5.2]{gao_invariant_2009}) If $G$ is a
closed subgroup of the Polish group $H$ then every Polish $G$-space $X$ can
be extended to a Polish $H$-space $\widetilde{X}$ in such a way that every $%
H $-orbit of $\widetilde{X}$ contains exactly one $G$-orbit of $X$.

\item (Becker--Kechris \cite[Theorem~3.3.4]{gao_invariant_2009}) If $X$ is a
Polish $G$-space then there is an equivariant embedding from $X$ into $%
F(G)^{\omega }$.
\end{itemize}

Next note that we can regard $G=\Iso(\mathbb{U})$ as a subspace of $\mathbb{U%
}^{\omega }$ by fixing a countable dense subset $\left( d_{n}\right) _{n\in
\omega }$ in $\mathbb{U}$ and identifying each $g$ with the sequence $%
(g(d_{n}))_{n\in \omega }$. Then it is easy to check that the map that sends
a closed subset $F\subset \Iso(\mathbb{U})$ to its closure $\overline{F}%
\subset \mathbb{U}^{\omega }$ is a Borel embedding of $\Iso(\mathbb{U})$%
-spaces. Hence we can suppose without loss of generality that $X=F(\mathbb{U}%
^{\omega })^{\omega }$. For each sequence $F=(F_{i})_{i\in \omega }\in X$ we
will construct an element $M_{F}\in \Mod(\mathcal{L},\mathbb{U})$ that codes 
$(F_{i})_{i\in \omega }$ as follows. First for each $i$ we define a sequence
of sets $A_{i}^{n}\subset \mathbb{U}^{n}$ by 
\begin{equation*}
A_{i}^{n}=\left\{ \,\bar{y}\in \mathbb{U}^{n}\mid \text{for every nbd $W $
of }\bar{y}\text{ we have $(W\times \mathbb{U}^{\omega })\cap F_{i}\neq
\emptyset $}\,\right\}
\end{equation*}%
It is easy to see the sets $A_{i}^{n}$ are closed. Moreover for each $i$ the
sets $A_{i}^{n}$ form the levels of a tree which codes $F_{i}$ in the sense
that $x\in F_{i}$ iff for all $n$ we have $x_{|n}\in A_{i}^{n}$. Now we
define the structure $M_{F}$ by interpreting the symbol $R_{i}^{n}$ as the
function 
\begin{equation*}
(R_{i}^{n})^{M_{F}}(\,\bar{y})=d(\,\bar{y},A_{i}^{n})\;.
\end{equation*}

It is now straightforward to verify, as in the proof of \cite[Theorem~3.6.1]%
{gao_invariant_2009}, that the function $f\colon F\mapsto M_{F}$ is a Borel
embedding of $\Iso(\mathbb{U})$-spaces from $X$ to $\Mod(\mathcal{L},\mathbb{%
\ U})$. By \cite[Corollary~15.2]{kechris_classical_1995} the range of $f$ is
a Borel subset of $\Mod(\mathcal{L},\mathbb{U})$. It therefore follows from
Corollary~\ref{Corollary: main} that there is an $\mathcal{L}_{\omega
_{1}\omega }$-sentence $\phi $ with the desired properties.
\end{proof}

A similar construction has been carried out with different methods in \cite[%
Proposition~1.3]{ivanov_polish_2013}. Theorem \ref{thm:equivariant} gives
further confirmation for the intuition that $\mathbb{U}$ and $\Iso(\mathbb{U}%
)$ play the same roles in model theory for metric structures as $\omega $
and $S_{\infty }$ play in first order model theory (for more examples see
for instance the main results of \cite{elliott_isomorphism_2013} and \cite%
{ivanov_polish_2013}).

We now give an application of Theorem~\ref{thm:equivariant} to the \emph{%
topological Vaught conjecture}, which is the assertion that for every Polish
group $G$ and standard Borel $G$-space $X$, either $X$ has just countably
many orbits or it has perfectly many orbits (see \cite[Section 6.2]%
{becker_descriptive_1996}). Here, $X$ is said to have perfectly many orbits
if there is an injective Borel reduction from the equality relation of $%
\mathbb{R}$ to the orbit equivalence relation of $G $ on $X$. In the
following result, the implication (1)$\Rightarrow $(2) is obvious, and (2)$%
\Rightarrow $(1) is an immediate consequence of Theorem~\ref{thm:equivariant}%
.

\begin{corollary}
\label{Corollary: Vaught conjecture}Let $\mathcal{L}$ denote a relational
countable language for continuous logic containing $1$-Lipschitz symbols of
unbounded arity. The following statements are equivalent:

\begin{enumerate}
\item The topological Vaught conjecture holds;

\item If $\phi $ is any $\mathcal{L}_{\omega _{1}\omega }$-sentence then $%
\Mod(\mathcal{L},\mathbb{U},\phi )$ has either countably many or perfectly
many isomorphism classes.
\end{enumerate}
\end{corollary}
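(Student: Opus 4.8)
The plan is to establish the two implications separately, as indicated after the statement. For (1)$\Rightarrow$(2), I would observe that for any $\mathcal{L}_{\omega_1\omega}$-sentence $\phi$ the set $\Mod(\mathcal{L},\mathbb{U},\phi)$ is the level set $(E_\phi)_{\le 0}$ of the Borel grey set $E_\phi$, hence is a Borel $\Iso(\mathbb{U})$-invariant subset of the Polish $\Iso(\mathbb{U})$-space $\Mod(\mathcal{L},\mathbb{U})$, and therefore a standard Borel $\Iso(\mathbb{U})$-space in its own right. Since two members of $\Mod(\mathcal{L},\mathbb{U})$ are isomorphic exactly when they lie in a common $\Iso(\mathbb{U})$-orbit, the orbits of this action are precisely the isomorphism classes occurring in $\Mod(\mathcal{L},\mathbb{U},\phi)$. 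Applying the topological Vaught conjecture to the Polish group $\Iso(\mathbb{U})$ acting on $\Mod(\mathcal{L},\mathbb{U},\phi)$ then yields (2) at once.

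For (2)$\Rightarrow$(1), I would start with an arbitrary Polish group $G$ and standard Borel $G$-space $X$ and feed them into Theorem~\ref{thm:equivariant}, obtaining a $\{0,1\}$-valued $\mathcal{L}_{\omega_1\omega}$-sentence $\phi$, a continuous monomorphism $\Phi\colon G\to\Iso(\mathbb{U})$, and a Borel injection $f\colon X\to\Mod(\mathcal{L},\mathbb{U})$ with $\mathop{\mathrm{rng}}(f)=\Mod(\mathcal{L},\mathbb{U},\phi)$ that is $\Phi$-equivariant and carries distinct $G$-orbits into distinct $\Iso(\mathbb{U})$-orbits. The key point is that $f$ then induces a bijection between the set of $G$-orbits of $X$ and the set of isomorphism classes of $\Mod(\mathcal{L},\mathbb{U},\phi)$: it is well defined and injective on the level of orbits by $\Phi$-equivariance together with the third bullet of Theorem~\ref{thm:equivariant}, and it is surjective onto isomorphism classes because its range is all of $\Mod(\mathcal{L},\mathbb{U},\phi)$. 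In particular $X$ has countably many orbits if and only if $\Mod(\mathcal{L},\mathbb{U},\phi)$ has countably many isomorphism classes.

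To transfer the ``perfectly many'' alternative in both directions, I would note that, $f$ being an injective Borel map between standard Borel spaces, standard facts about Borel injections (see \cite[Section~15]{kechris_classical_1995}) show that $\mathop{\mathrm{rng}}(f)$ is Borel and that $f$ is a Borel isomorphism of $X$ onto $\Mod(\mathcal{L},\mathbb{U},\phi)$ with Borel inverse; combined with equivariance, $f$ and $f^{-1}$ are Borel reductions in both directions between the $G$-orbit equivalence relation on $X$ and the isomorphism relation on $\Mod(\mathcal{L},\mathbb{U},\phi)$. Hence an injective Borel reduction of $=_{\mathbb{R}}$ into the isomorphism relation (a perfect set of pairwise non-isomorphic structures) composes with $f^{-1}$ to an injective Borel reduction of $=_{\mathbb{R}}$ into the orbit relation, and vice versa. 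So whichever alternative of (2) holds for $\phi$, the corresponding alternative of the dichotomy holds for $X$; as $G$ and $X$ were arbitrary, (1) follows.

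I do not expect a genuine obstacle here, since essentially all the content has already been absorbed into Theorem~\ref{thm:equivariant}; what remains is the bookkeeping of how a Borel orbit isomorphism preserves the cardinality of the orbit space and the property of having perfectly many classes. The only point demanding a little care is confirming that $f$ restricts to a Borel isomorphism onto its (Borel) range with Borel, orbit-preserving inverse, so that both alternatives of (2) genuinely transfer to $X$.
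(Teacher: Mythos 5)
Your proposal is correct and follows essentially the same route as the paper, which disposes of (1)$\Rightarrow$(2) by noting that $\Mod(\mathcal{L},\mathbb{U},\phi)$ is a standard Borel $\Iso(\mathbb{U})$-space whose orbits are exactly the isomorphism classes, and obtains (2)$\Rightarrow$(1) directly from Theorem~\ref{thm:equivariant}. Your additional bookkeeping (the induced bijection on orbits and the transfer of the perfectly-many alternative through the Borel isomorphism $f$ and its inverse) simply spells out what the paper declares obvious and immediate.
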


Finally, we consider an application to infinitary logic. In L\'{o}%
pez-Escobar's original work, he was interested foremost in establishing an
interpolation property for the logic $\mathcal{L}_{\omega _{1}\omega }$.
What we have called L\'{o}pez-Escobar's theorem is in fact equivalent to
this interpolation result. We will now show that a similar phenomenon holds
in the setting of logic for metric structures. First we need an analog of
the Luzin separation theorem \cite[Theorem~1.6.1]{gao_invariant_2009} for
grey sets.

\begin{proposition}
\label{Proposition: luzin}

\begin{enumerate}
\item Let $X$ be a Polish space, and suppose that $A,B$ are grey subsets of $%
X$, $A$ is analytic, $B$ is coanalytic, and $A\geq B$. Then there is a Borel
grey subset $C\sqsubseteq X$ such that $A\geq C\geq B$.

\item Let $X$ be a Polish $G$-space, $A,B$ as above, and suppose
additionally that $A,B$ are $G$-invariant. Then there is a $G$-invariant
Borel grey subset $C\sqsubseteq X$ such that $A\geq C\geq B$.
\end{enumerate}
\end{proposition}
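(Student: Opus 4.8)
The plan is to reduce the grey version of Luzin separation to the classical black-and-white version applied to the level sets, together with a bookkeeping step to glue the resulting Borel level sets into a single Borel grey set.

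\medskip

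First I would recall the characterizations noted earlier in the paper: a grey subset $A$ of $X$ is analytic iff each level set $A_{<r}$ ($r\in\mathbb{Q}$) is analytic, and a grey subset $B$ is coanalytic iff each $B_{\le r}$ is coanalytic. The hypothesis $A\ge B$ means $A(x)\ge B(x)$ for every $x$, which is equivalent to saying that for every rational $r$ we have $A_{<r}\subseteq B_{<r}$, and also $B_{\le r}\subseteq A_{\le r}$; the cleanest formulation is: for all rationals $r<s$, $A_{<s}\supseteq B_{\le r}$ is false in general, so I want the right pair. Precisely, $A\ge B$ iff for all rationals $r$, $\{x: B(x)\le r\}\subseteq\{x: A(x)\le r\}$, equivalently $\{x:A(x)<r\}\subseteq\{x:B(x)<r\}$. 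For each rational $r$ I would then apply the classical Luzin separation theorem to the analytic set $A_{<r}$ and the coanalytic set $X\setminus B_{<r}=B_{\ge r}$, which are disjoint precisely because $A_{<r}\subseteq B_{<r}$; this yields a Borel set $C_r$ with $A_{<r}\subseteq C_r$ and $C_r\cap B_{\ge r}=\emptyset$, i.e.\ $A_{<r}\subseteq C_r\subseteq B_{<r}$.

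\medskip

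The main obstacle is that the family $(C_r)_{r\in\mathbb{Q}\cap[0,1]}$ produced by separate applications of Luzin will not in general be monotone in $r$, so it need not arise as the level sets of any function. To fix this I would first intersect downward: replace $C_r$ by $C_r':=\bigcap_{s\ge r,\,s\in\mathbb{Q}}C_s$. Since the $A_{<r}$ and $B_{<r}$ are themselves monotone increasing in $r$, one checks $A_{<r}\subseteq C_r'$ (because $A_{<r}\subseteq A_{<s}\subseteq C_s$ for $s\ge r$) and $C_r'\subseteq C_r\subseteq B_{<r}$, and now $C_r'$ is monotone increasing in $r$; each $C_r'$ is still Borel, being a countable intersection. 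I would then define $C\sqsubseteq X$ by $C(x)=\inf\{r\in\mathbb{Q}\cap[0,1]: x\in C_r'\}$ (with the convention that the infimum of the empty set is $1$). Monotonicity gives $C_{<r}=\bigcup_{s<r}C_s'$, which is Borel for each $r$, so $C$ is a Borel grey set; and from $A_{<r}\subseteq C_r'\subseteq B_{<r}$ for all rational $r$ one reads off $A\ge C\ge B$. This proves part (1).

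\medskip

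For part (2), I would run the identical argument but invoke the \emph{invariant} Luzin separation theorem (the $G$-invariant refinement of \cite[Theorem~1.6.1]{gao_invariant_2009}, e.g.\ via the Becker--Kechris machinery) at each rational level: since $A$ and $B$ are $G$-invariant, so are all the level sets $A_{<r}$ and $B_{\ge r}$, and the separating Borel set $C_r$ can be chosen $G$-invariant. Invariance is preserved under the countable intersection defining $C_r'$ and under the infimum defining $C$, so the resulting Borel grey set $C$ is $G$-invariant and still satisfies $A\ge C\ge B$. The only points requiring care are the routine verifications that the level-set characterizations of analytic/coanalytic/Borel grey sets interact correctly with the monotonization step, and that the convention for $\inf\emptyset$ does not create a measurability problem at $r=1$; neither is a serious difficulty.
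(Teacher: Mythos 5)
Your part (1) is essentially the paper's argument: apply classical Luzin separation at each rational level and glue the separators with an infimum. Two small remarks. First, the monotonization step $C_r':=\bigcap_{s\ge r}C_s$ is not needed: even for a non-monotone family the function $C(x)=\inf\{r\in\mathbb{Q}\cap[0,1]\mid x\in C_r\}$ has level sets $C_{<r}=\bigcup_{s<r,\,s\in\mathbb{Q}}C_s$, hence is Borel, and $A\ge C\ge B$ is checked exactly as you do; this is what the paper does, separating the analytic $A_{<r}$ inside the coanalytic $B_{\le r}$ (which is the level set the paper's definition of a coanalytic grey set hands you directly). Second, $X\smallsetminus B_{<r}$ is analytic rather than coanalytic (it is the complement of $B_{<r}=\bigcup_{s<r}B_{\le s}$, a countable union of coanalytic sets), but since you then apply the two-disjoint-analytic-sets form of Luzin separation this is only a labeling slip, not a gap. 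For part (2) your route genuinely differs from the paper's: you invoke the invariant Luzin separation theorem levelwise, so that each $C_r$ is already $G$-invariant, and invariance passes through the gluing. That is correct, provided one accepts the invariant separation theorem for Polish $G$-spaces as a black box; its standard proof is itself a Vaught-transform argument. The paper instead invariantizes in one stroke at the grey level: it takes the (possibly non-invariant) Borel grey separator $D$ from part (1) and sets $C(x)\le r\iff\forall^{\ast}g\in G\ D(gx)\le r$, i.e.\ applies the grey category quantifier, with Borelness and invariance coming from (the proof method of) Proposition~\ref{Proposition: grey quantifiers} and $A\ge C\ge B$ from the invariance of $A$ and $B$. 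Your version outsources the dynamical content to a classical theorem applied $\omega$ many times; the paper's version is self-contained within its grey-set machinery and uses a single transform, which also fits the tools it needs later anyway. Either way the proposition is proved.
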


\begin{proof}
(a) Fix $r\in \mathbb{Q}\cap \left[ 0,1\right] $ and observe that $%
A_{<r}\subset B_{\leq r}$, where $A_{<r}$ is analytic and $B_{\leq r}$ is
coanalytic. Therefore by \cite[Theorem~1.6.1]{gao_invariant_2009} there is $%
P^{\left( r\right) }\subset X$ Borel such that $A_{<r}\subset P^{\left(
r\right) }\subset B_{\leq r}$. Now let $C\sqsubset X$ be the grey subset
defined by 
\begin{equation*}
C(x)=\inf \left\{ r\in \mathbb{Q}\cap \left[ 0,1\right] \mid x\in P^{\left(
r\right) }\right\} \text{.}
\end{equation*}
It is straightforward to verify that $C$ is Borel and $A\geq C\geq B$.

(b) By part (a) there is a Borel grey subset $D$ of $X$ such that $A\geq
D\geq B$. Define the grey subset $C\ $of $X$ by $C(x)\leq r$ if and only if $%
\forall ^{\ast }g\in G$, $D(gx)\leq r$. It is not difficult to verify by
induction on the Borel rank of $D$ that $C$ is a Borel $G$-invariant subset
of $X$ (see also Proposition \ref{Proposition: grey quantifiers}). It is
clear that $A\geq C\geq B$, which concludes the proof.
\end{proof}

We are now ready to prove the interpolation theorem for $\mathcal{L}_{\omega
_{1}\omega }$. In the following if $\mathcal{L}$ and $\mathcal{R}$ are
possibly distinct languages, and $\phi $ is an $\mathcal{L}_{\omega
_{1}\omega }$-sentence, and $\rho $ is an $\mathcal{R}_{\omega _{1}\omega }$%
-sentence, then we write $\phi \models \rho $ iff $\phi ^{M}\geq \rho ^{M}$
for every $M\in \Mod(\mathcal{L}\cup \mathcal{R},\mathbb{U})$.

\begin{corollary}
\label{Corollary: interpolation}The logic $\mathcal{L}_{\omega _{1}\omega }$
has the following interpolation property: Suppose that $\mathcal{L},\mathcal{%
R},\mathcal{S}$ are pairwise disjoint countable languages, $\phi $ is a
sentence in $\left( \mathcal{L}\cup \mathcal{R}\right) _{\omega _{1}\omega }$
and $\rho $ is a sentence in $\left( \mathcal{L}\cup \mathcal{S}\right)
_{\omega _{1}\omega }$. If $\phi \models \rho $, then there is an $\mathcal{L%
}_{\omega _{1}\omega }$-sentence $\tau $ such that $\phi \models \tau $ and $%
\tau \models \rho $.
\end{corollary}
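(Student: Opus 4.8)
The plan is to mimic the classical Craig-interpolation-from-L\'opez-Escobar argument, using Corollary~\ref{Corollary: main} together with the invariant Luzin separation theorem (Proposition~\ref{Proposition: luzin}). First I would work inside the combined language $\mathcal{L}\cup\mathcal{R}\cup\mathcal{S}$, and consider the space $\Mod(\mathcal{L}\cup\mathcal{R}\cup\mathcal{S},\mathbb{U})$. There is a natural $\Iso(\mathbb{U})$-equivariant Borel projection $\pi_{\mathcal{L}}\colon \Mod(\mathcal{L}\cup\mathcal{R}\cup\mathcal{S},\mathbb{U})\to\Mod(\mathcal{L},\mathbb{U})$ that simply forgets the interpretations of the symbols in $\mathcal{R}$ and $\mathcal{S}$, and similarly projections $\pi_{\mathcal{L}\cup\mathcal{R}}$ and $\pi_{\mathcal{L}\cup\mathcal{S}}$. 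Using these I would define two grey subsets of $\Mod(\mathcal{L},\mathbb{U})$: an analytic one $A$ obtained by pushing $E_{\phi}$ down along $\pi_{\mathcal{L}}$ via an infimum over the $\mathcal{R}$- and $\mathcal{S}$-coordinates, and a coanalytic one $B$ obtained by pushing $E_{\rho}$ down via a supremum. Concretely, for $M\in\Mod(\mathcal{L},\mathbb{U})$ set
\[
A(M)=\inf\{\phi^{N}\mid N\in\Mod(\mathcal{L}\cup\mathcal{R},\mathbb{U}),\ \pi_{\mathcal{L}}(N)=M\},\qquad
B(M)=\sup\{\rho^{N}\mid N\in\Mod(\mathcal{L}\cup\mathcal{S},\mathbb{U}),\ \pi_{\mathcal{L}}(N)=M\}.
\]
The point of restricting to the Urysohn sphere as the common support is exactly that any $\mathcal{L}$-structure on $\mathbb{U}$ can be expanded to an $(\mathcal{L}\cup\mathcal{R})$- or $(\mathcal{L}\cup\mathcal{S})$-structure on $\mathbb{U}$, and conversely every such expansion restricts to one on $\mathbb{U}$; so the fibers of the projections are always nonempty and the infima/suprema are taken over genuine sets of structures.

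Next I would check the two properties that make Proposition~\ref{Proposition: luzin} applicable: that $A$ is analytic, $B$ coanalytic, both $\Iso(\mathbb{U})$-invariant, and that $A\geq B$. Invariance is immediate because the projections are equivariant and $E_{\phi},E_{\rho}$ are $\Iso(\mathbb{U})$-invariant grey subsets of the respective model spaces (each $E_{\psi}$ for an $\mathcal{L}_{\omega_1\omega}$-sentence is Borel and isomorphism-invariant). That $A$ is analytic and $B$ coanalytic follows from the definition of analytic/coanalytic grey sets given in the introduction: $A$ is an infimum over a Polish-space parameter (the fiber of a Borel projection, which one realizes as a Borel subset of $\Mod(\mathcal{L}\cup\mathcal{R},\mathbb{U})$) of the Borel grey set $E_{\phi}\circ(\text{inclusion})$, so $A=\inf_{y}B'$ for a Borel grey $B'$; dually for $B$. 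For $A\geq B$: given $M$, pick near-optimal expansions $N_{1}\supseteq M$ into $\mathcal{L}\cup\mathcal{R}$ and $N_{2}\supseteq M$ into $\mathcal{L}\cup\mathcal{S}$; amalgamate them over the common reduct $M$ into a single $(\mathcal{L}\cup\mathcal{R}\cup\mathcal{S})$-structure $N$ on $\mathbb{U}$ (the languages $\mathcal{R}$ and $\mathcal{S}$ are disjoint, so there is no conflict, and again $\mathbb{U}$ supports the amalgam); then $\phi^{N}=\phi^{N_{1}}$ and $\rho^{N}=\rho^{N_{2}}$ since $\phi,\rho$ only mention symbols present in $N_{1}$, $N_{2}$ respectively, and the hypothesis $\phi\models\rho$ gives $\phi^{N}\geq\rho^{N}$; letting the choices approach the infimum and supremum yields $A(M)\geq B(M)$.

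Having established these, Proposition~\ref{Proposition: luzin}(b) produces an $\Iso(\mathbb{U})$-invariant Borel grey subset $C$ of $\Mod(\mathcal{L},\mathbb{U})$ with $A\geq C\geq B$. By Theorem~\ref{thm:main} there is an $\mathcal{L}_{\omega_1\omega}$-sentence $\tau$ with $\tau^{M}=C(M)$ for all $M\in\Mod(\mathcal{L},\mathbb{U})$. It remains to verify $\phi\models\tau$ and $\tau\models\rho$, i.e.\ $\phi^{N}\geq\tau^{N}$ for every $N\in\Mod(\mathcal{L}\cup\mathcal{R},\mathbb{U})$ and $\tau^{N}\geq\rho^{N}$ for every $N\in\Mod(\mathcal{L}\cup\mathcal{S},\mathbb{U})$. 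For the first: if $N\in\Mod(\mathcal{L}\cup\mathcal{R},\mathbb{U})$ and $M=\pi_{\mathcal{L}}(N)$ is its $\mathcal{L}$-reduct, then $\tau^{N}=\tau^{M}=C(M)\leq A(M)\leq\phi^{N}$, the last inequality because $N$ is one of the expansions over which the infimum defining $A(M)$ is taken. The second is dual, using $B$ and the supremum. This finishes the proof.

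The step I expect to be the main obstacle is verifying the analyticity/coanalyticity and the inequality $A\geq B$ rigorously at the level of grey sets, in particular making sure the ``fiber of a Borel projection'' is presented as (or replaced by) an honest Polish space so that the definition of analytic grey set applies verbatim, and that the amalgamation of the two expansions is carried out within $\Mod(\cdot,\mathbb{U})$ rather than on some larger support. The amalgamation is where the ultrahomogeneity/universality of $\mathbb{U}$ is used essentially: because all structures live on the fixed support $\mathbb{U}$, combining the $\mathcal{R}$-part of $N_{1}$ with the $\mathcal{S}$-part of $N_{2}$ over the shared $\mathcal{L}$-reduct $M$ is literally just forming the union of the interpretation data, with no need to enlarge or reselect the underlying metric space; one only has to note that the resulting relation and function symbols still satisfy their prescribed moduli of continuity, which they do since each was inherited unchanged from $N_{1}$ or $N_{2}$. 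Everything else is a routine transcription of the discrete argument.
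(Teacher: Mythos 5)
Your proposal is correct and is essentially the paper's own argument: the same analytic grey set $A=\inf$ of $E_{\phi}$ over expansions, coanalytic $B=\sup$ of $E_{\rho}$, the invariant separation of Proposition~\ref{Proposition: luzin}(2), Theorem~\ref{thm:main} to realize the separating $C$ as $E_{\tau}$, and the same final verification $\tau^{N}=C(M)\leq A(M)\leq\phi^{N}$ (and dually). The only difference is presentational: the paper identifies $\Mod(\mathcal{L}\cup\mathcal{R},\mathbb{U})$ with $\Mod(\mathcal{L},\mathbb{U})\times\Mod(\mathcal{R},\mathbb{U})$ (and likewise for $\mathcal{S}$), so the infimum and supremum range over a fixed Polish factor and the ``fiber of a projection'' and amalgamation issues you flag become literally the product structure, requiring no further argument.
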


\begin{proof}
We can canonically identify $\Mod(\mathcal{L}\cup \mathcal{S},\mathbb{U})$
with $\Mod(\mathcal{L},\mathbb{U})\times \Mod(\mathcal{S},\mathbb{U})$ and $%
\Mod(\mathcal{L}\cup \mathcal{R},\mathbb{U})$ with $\Mod(\mathcal{L},\mathbb{%
\ U})\times \Mod(\mathcal{R},\mathbb{U})$. Define $A$ to be the analytic
subset of $\Mod(\mathcal{L},\mathbb{U})$ 
\begin{equation*}
A=\inf\nolimits_{M\in \Mod(\mathcal{S},\mathbb{U})}E_{\phi }
\end{equation*}%
where $E_{\phi }\sqsubseteq \Mod(\mathcal{L},\mathbb{U})\times \Mod(\mathcal{%
\ S},\mathbb{U})$. Similarly define $B$ to be the coanalytic subset of $\Mod(%
\mathcal{L},\mathbb{U})$ 
\begin{equation*}
B=\sup\nolimits_{M\in \Mod(\mathcal{S},\mathbb{U})}E_{\rho }\text{.}
\end{equation*}%
Observe that $A\geq B$ since $\phi \models \rho $. Therefore by Proposition~%
\ref{Proposition: luzin} there is a $\Iso(\mathbb{U})$-invariant Borel grey
subset $C$ of $\Mod(\mathcal{L},\mathbb{U})$ such that $A\geq C\geq B$. By
Theorem \ref{thm:main} there is an $\mathcal{L}_{\omega _{1}\omega }$%
-sentence $\tau $ such that $C=E_{\tau }$. It is immediate to verify that $%
A\geq E_{\tau }\geq B$ implies that $\phi \models \tau $ and $\tau \models
\rho $.
\end{proof}

\section{Further notions and a strengthening of the main result\label%
{Section: stronger statement}}

In this section we formulate a statement that is stronger than Theorem~\ref%
{thm:main} and handles the case when $A$ is a grey subset that is not
invariant. Our motivation for this proof strategy comes from Vaught's
dynamical proof of L\'{o}pez-Escobar's theorem (see \cite%
{vaught_invariant_1974} or \cite[Theorem 7.8]{kechris_classical_1995}).

In order to state the stronger result, we will need to introduce the
following category quantifiers for grey sets. These generalize the classical
category quantifiers $\exists ^{\ast }$ and $\forall ^{\ast }$ as defined
for instance in \cite[Section~8.J]{kechris_classical_1995}. If $X,Y$ are
Polish spaces, $U\subset Y$ is open, and $B$ is a grey subset of $X\times Y$%
, then we define the grey subsets $\inf_{y\in U}^*B$ and $\sup_{y\in U}^*B$
of $X$ by the properties: 
\begin{align*}
\left( \inf\nolimits_{y\in U}^{\ast }B\right) (x)<r& \iff \exists ^{\ast
}y\in U\text{ such that }B(x,y)<r\text{,} \\
\left( \sup\nolimits_{y\in U}^{\ast }B\right) (x)>r& \iff \exists ^{\ast
}y\in U\text{ such that }B(x,y)>r\text{.}
\end{align*}

The next proposition lists some of the basic properties of these
set-theoretic category quantifiers. They can be proved with the same
arguments as Propositions~3.2.5, 3.2.6, and Theorem~3.2.7 of \cite%
{gao_invariant_2009} (in \cite[3.2.5-3.2.7]{gao_invariant_2009} the space $Y$
of Proposition \ref{Proposition: grey quantifiers} appears as a Polish group 
$G$ acting on $X$). Note that in the statement, as in the rest of this
article, all the usual arithmetic operations in fact denote their \emph{%
truncated} versions to the interval $[0,1]$. For example if $a,b\in \mathbb{R%
}$ then $a+b$ stands for 
\begin{equation*}
\max \left\{ 0,\min \left\{ 1,a+b\right\} \right\}
\end{equation*}%
and similarly for the other operations.

\begin{proposition}
\label{Proposition: grey quantifiers}Let $X,Y$ be Polish, $U\subset Y$ open,
and $B$ a grey subset of $X\times Y$.

\begin{enumerate}
\item $\inf_{y\in U}^{\ast }\left( q-B\right) =q-\sup_{y\in U}^{\ast }B$ for
any $q\in \left[ 0,1\right] $;\label{Clause: negation grey quantifier}


\item $\sup_{y\in U}^{\ast }B=\sup_{n\in \omega }\inf_{y\in W_{n}\cap
U}^{\ast }B$, where $W_{n}$ enumerates a basis for $Y$;

\item If $B_{n}$ is a sequence of grey subsets of $X\times Y$, then $%
\inf_{n}\inf_{y}^{\ast }B_{n}=\inf_{y}^{\ast }\inf_{n}B_{n}$ and $\sup_{y\in
U}^{\ast }\sup_{n}B_{n}=\sup_{n}\sup_{y\in U}^{\ast }B_{n}$;

\item If $B$ is open then $\inf_{y\in U}^{\ast }B$ is open;

\item If $B$ is $\mathbf{\Sigma }_{\alpha }^{0}$ then $\inf_{y\in U}^{\ast
}B $ is $\mathbf{\Sigma }_{\alpha }^{0}$;

\item If $B$ is $\mathbf{\Pi }_{\alpha }^{0}$ then $\sup_{y\in U}^{\ast }B$
is $\mathbf{\Pi }_{\alpha }^{0}$.
\end{enumerate}
\end{proposition}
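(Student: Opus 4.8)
The plan is to prove each clause by reducing it to the corresponding black-and-white statement about the classical category quantifiers $\exists^\ast$ and $\forall^\ast$, applied to the level sets, exactly as suggested by the parenthetical reference to \cite[3.2.5--3.2.7]{gao_invariant_2009}. The first step is to record the defining equivalences in their dual forms: from the definition, $\left(\inf_{y\in U}^\ast B\right)(x)<r$ iff $\exists^\ast y\in U\,(B(x,y)<r)$, and taking complements in the Baire-category sense, $\left(\inf_{y\in U}^\ast B\right)(x)\ge r$ iff $\forall^\ast y\in U\,(B(x,y)\ge r)$; symmetrically for $\sup^\ast$ with the inequalities reversed. One must first check these genuinely define grey subsets, i.e.\ that the prescribed collections of level sets $\{x:(\inf^\ast_{y\in U}B)(x)<r\}$ are monotone and ``continuous from below'' in $r$ in the way needed to come from a single function $X\to[0,1]$; this follows because $B(x,\cdot)<r$ is increasing in $r$ and $\{B(x,\cdot)<r\}=\bigcup_{q<r}\{B(x,\cdot)\le q\}=\bigcup_{q<r}\{B(x,\cdot)<q\}$ (over rationals $q$), so $\exists^\ast$ commutes with the countable increasing union, giving the required $r\mapsto(\inf^\ast B)(x)$ left-continuity.

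Given that, clause (1) is just De Morgan for $\exists^\ast/\forall^\ast$: $\left(\inf^\ast_{y\in U}(q-B)\right)(x)<r$ iff $\exists^\ast y\,(q-B(x,y)<r)$ iff $\exists^\ast y\,(B(x,y)>q-r)$ iff $\left(\sup^\ast_{y\in U}B\right)(x)>q-r$ iff $\left(q-\sup^\ast_{y\in U}B\right)(x)<r$, using that truncated subtraction makes $q-B(x,y)<r \Leftrightarrow B(x,y)>q-r$ hold on the relevant range; one checks the boundary cases $r=0$ and $r=1$ separately. Clause (2) reflects the fact that $\exists^\ast y\in U$ iff $\exists^\ast y\in W_n\cap U$ for some $n$ (a standard property of the Baire-category quantifier over a basis $\{W_n\}$): at the level of level sets, $\{B(x,\cdot)<r\}$ is nonmeager in $U$ iff it is nonmeager in some basic $W_n\cap U$, and this passes through the $r$-infimum to give the stated identity $\sup^\ast_{y\in U}B=\sup_n\inf^\ast_{y\in W_n\cap U}B$ (note the outer $\sup_n$ is a pointwise supremum of grey sets, matching the existential ``for some $n$''). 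Clause (3) similarly uses that $\exists^\ast$ commutes with countable unions and $\forall^\ast$ with countable intersections: $\inf_n\inf^\ast_y B_n(x)<r$ iff $\exists n\,\exists^\ast y\,(B_n(x,y)<r)$ iff $\exists^\ast y\,\exists n\,(B_n(x,y)<r)$ iff $\exists^\ast y\,((\inf_n B_n)(x,y)<r)$, using $\{(\inf_n B_n)(x,\cdot)<r\}=\bigcup_n\{B_n(x,\cdot)<r\}$; the $\sup$ half is dual.

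For clauses (4), (5), (6) the strategy is to transfer the classical results on the definability/Borel-complexity-preservation of the category quantifier. If $B$ is open as a grey set, then each $B_{<r}$ is open in $X\times Y$, so by the classical fact (\cite[Section~8.J]{kechris_classical_1995}) that $\exists^\ast y\in U$ applied to an open set yields an open set, $\{(\inf^\ast_{y\in U}B)_{<r}\}$ is open in $X$ for every $r$; hence $\inf^\ast_{y\in U}B$ is open, giving (4). Clause (5) is then proved by induction on $\alpha$: for $\alpha=1$ it is (4); at a general stage, writing a $\mathbf\Sigma^0_\alpha$ grey set as $B=\inf_n B_n$ with $B_n\in\bigcup_{\beta<\alpha}\mathbf\Pi^0_\beta$, clause (3) gives $\inf^\ast_{y\in U}B=\inf^\ast_{y\in U}\inf_n B_n=\inf_n\inf^\ast_{y\in U}B_n$, and one reduces the $\mathbf\Pi^0_\beta$ pieces via clause (1) plus clause (6) at lower ranks — so (5) and (6) are proved by a simultaneous induction, with (6) the dual of (5) under (1). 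The main technical point, and the place where care is genuinely needed, is clause (4) together with the well-definedness discussion: one is taking $\exists^\ast$ of a \emph{non-open} ambient set $\{(x,y):B(x,y)<r\}$ that is itself only a union of opens after fixing $x$, and one must ensure the $x$-sections behave uniformly enough that ``$\{x:\{y\in U:B(x,y)<r\}\text{ nonmeager}\}$'' is open; this is exactly where the hypothesis that $B$ is an \emph{open grey set} (every $B_{<r}$ open in the product) rather than merely ``$B_{<r}$ open in each section'' is used, and it is the analogue of the corresponding step in \cite[Proposition~3.2.5]{gao_invariant_2009}. The rest is bookkeeping with rationals and the truncated arithmetic.
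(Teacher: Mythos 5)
Your overall strategy (reduce each clause to classical facts about $\exists^\ast/\forall^\ast$ applied to the level sets, as in \cite[Propositions 3.2.5--3.2.7]{gao_invariant_2009}) is exactly what the paper intends, and your treatment of well-definedness, (1), (3) and (4) is fine. But your argument for clause (2) proves the wrong identity. The fact you invoke --- a set is nonmeager in $U$ iff it is nonmeager in some $W_n\cap U$, which is just ``a countable union of meager sets is meager'' --- yields $\inf^\ast_{y\in U}B=\inf_n\inf^\ast_{y\in W_n\cap U}B$ (equivalently $\sup^\ast_{y\in U}B=\sup_n\sup^\ast_{y\in W_n\cap U}B$), whereas clause (2) mixes the two quantifiers. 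On level sets it says: $\exists^\ast y\in U\,(B(x,y)>r)$ iff there are $n$ and a rational $s>r$ with $\forall^\ast y\in W_n\cap U\,(B(x,y)\geq s)$; that is, it is the grey analogue of ``a set with the Baire property is nonmeager in $U$ iff it is comeager in some nonempty basic open subset of $U$.'' This genuinely requires the Baire property of the level sets of the sections $B(x,\cdot)$ (it fails for the zero-indicator of a Bernstein set), together with the Baire category theorem for the converse direction, and your parenthetical that ``the outer $\sup_n$ matches the existential'' has the polarity backwards for the $<r$ level sets you chose to work with. You should also flag that (2) therefore needs a Baire-property (e.g.\ Borel) hypothesis on $B$, which is how Gao's statements are phrased and how the proposition is used in the paper, and, as a minor point, that the supremum must be restricted to those $n$ with $W_n\cap U\neq\varnothing$, since $\inf^\ast$ over the empty set is identically $1$.

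This is not a cosmetic slip, because your induction for (5)--(6) founders at exactly the spot where (2) is needed. After writing $B=\inf_n B_n$ with $B_n\in\mathbf{\Pi}^0_{\beta_n}$ and applying (3), you must control $\inf^\ast_{y\in U}B_n$ for a $\mathbf{\Pi}^0_{\beta_n}$ grey set $B_n$; ``clause (1) plus clause (6) at lower ranks'' only controls $\sup^\ast$ of $\mathbf{\Pi}^0_\beta$ sets and, dually, $\inf^\ast$ of $\mathbf{\Sigma}^0_\beta$ sets, and $\inf^\ast$ of a $\mathbf{\Pi}^0_\beta$ set is in general of strictly higher complexity (already for a closed grey set it is only $\mathbf{\Sigma}^0_2$). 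The missing step is the dual form of (2), namely $\inf^\ast_{y\in U}B_n=\inf_m\sup^\ast_{y\in W_m\cap U}B_n$, which combined with (6) at rank $\beta_n$ places $\inf^\ast_{y\in U}B_n$ in $\mathbf{\Sigma}^0_{\beta_n+1}\subseteq\mathbf{\Sigma}^0_\alpha$; this is precisely how the classical computation in \cite[Theorem 3.2.7]{gao_invariant_2009} runs and is where the Baire property enters the induction. So as written, your proofs of (2), (5) and (6) have a genuine gap, even though the intended route is the paper's.
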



Although we will refrain from using the notation in our proof, it is worth
remarking that the category quantifiers can be used to define a version of
the Vaught transforms in the grey setting. (The grey Vaught transforms were
first introduced in \cite[Section~2.1]{ivanov_polish_2013}.) If $X$ is a
Polish $G$-space, $A\sqsubseteq X$ is Borel, and $U\sqsubseteq G$ is open,
then 
\begin{align*}
A^{\ast U}(x)& =\sup\nolimits_{g\in G}^{\ast }\left( A(gx)-U(g)\right) \text{%
, and} \\
A^{\triangle U}(x)& =\inf\nolimits_{g\in G}^{\ast }\left( A(gx)+U(g)\right) 
\text{.}
\end{align*}%
The basic properties of the Vaught transforms listed in \cite[Lemma 2.4]%
{ivanov_polish_2013} can easily be obtained as a consequence of Proposition %
\ref{Proposition: grey quantifiers}.


We will also need some notation for a family of \textquotedblleft
basic\textquotedblright\ open graded subsets of $\Iso(\mathbb{U})$. We fix
once and for all an enumeration $\mathbf{p}=\left( p_{n}\right) _{n\in
\omega }$ of a dense subset of $\mathbb{U}$. For any $u\in \mathbb{U}^{k}$
we define the open grey subset $[u]$ of $\Iso(\mathbb{U})$ by 
\begin{equation*}
\left[ u\right] (g)=d(g^{-1}\mathbf{p}_{|k},u)\text{.}
\end{equation*}%
(Here as usual $d$ denotes the maximum metric on $\mathbb{U}^{k}$.) We also
let $\mathcal{O}(\mathbf{p}_{|k})$ be the orbit of $\mathbf{p}_{|k}$ under
the action of $\mathrm{Iso}(\mathbb{U})$, which coincides with the set of
realizations of the type of the $k$-tuple $\mathbf{p}_{|k}$. The level sets $%
[u]_{<r}$, where $u\in \mathcal{O}(\mathbf{p}_{|k})$ and $r>0$, form an open
basis for the topology of $\Iso(\mathbb{U})$.

We are now ready to state our strengthening of Theorem~\ref{thm:main}.
Roughly speaking, the result accommodates Borel graded sets that are not
invariant, at the cost of taking a Vaught transform and allowing parameters
in the formula $\phi $. In the statement, we say that a formula $\phi$ is 
\emph{$N$-Lipschitz} if its modulus of continuity is bounded above by the
function $f(t)=Nt$.

\begin{theorem}
\label{thm:parameters}Suppose that $\mathcal{L}$ is a countable language for
continuous logic, $\mathbf{p}$ is as above, and $k\in \mathbb{N}$. For any
Borel grey subset $A\sqsubseteq \Mod(\mathcal{L},\mathbb{U})$ and for any $%
N\in \mathbb{N}$ there exists an $N$-Lipschitz $\mathcal{L}_{\omega
_{1}\omega }$-formula $\phi $ with $k$ free variables such that for every $%
M\in \Mod(\mathcal{L},\mathbb{U})$ and $u\in \mathbb{U}^{k}$, we have 
\begin{equation*}
\sup\nolimits_{g\in \Iso(\mathbb{U})}^{\ast }[A(gM)-Nd(g^{-1}\mathbf{p}%
_{|k},u)]=\phi ^{M}(u)\text{.}
\end{equation*}
\end{theorem}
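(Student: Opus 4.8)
The plan is to prove Theorem~\ref{thm:parameters} by induction on the Borel rank $\alpha$ of the grey subset $A \sqsubseteq \Mod(\mathcal{L}, \mathbb{U})$, simultaneously for all $k$ and all Lipschitz constants $N$. Since the statement involves the Vaught-type transform $\sup^*_{g}[A(gM) - N d(g^{-1}\mathbf{p}_{|k}, u)]$, which by the notation introduced above is essentially $A^{*U}$ for $U$ the grey set $N\cdot[u]$ (up to the scaling that keeps things $N$-Lipschitz), the engine of the proof will be Proposition~\ref{Proposition: grey quantifiers}, which tells us exactly how the category quantifier $\sup^*_{y \in U}$ interacts with the Borel hierarchy, with negation ($q - \cdot$), with countable $\inf$ and $\sup$, and how it decomposes over a basis. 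The key geometric input specific to $\mathbb{U}$ will be a perturbation lemma (referred to in the excerpt as Lemma~\ref{Lemma: perturbation}, built on definability of orbits of finite tuples, \cite[Proposition~9.19]{ben_yaacov_model_2008}), which lets us replace a supremum/infimum over isometries moving $\mathbf{p}_{|k}$ into a neighborhood of $u$ by a genuine metric quantifier $\inf_x/\sup_x$ over points of $\mathbb{U}$ — this is what converts a category quantifier over $\Iso(\mathbb{U})$ into a logical quantifier in the formula $\phi$.

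\textbf{Base case.} For $\alpha = 1$, $A$ is an open grey subset of $\Mod(\mathcal{L}, \mathbb{U})$, i.e.\ upper semicontinuous. A basic open grey set is determined by finitely many conditions of the form ``$(R^M(\bar q), f^M(\bar q))$ lies near prescribed values'' for $\bar q$ among the dense points $\mathbf{p}$; each such condition is the evaluation of an atomic $\mathcal{L}$-formula (with parameters from $\mathbf{p}$) composed with a continuous connective. Writing $A$ as a countable $\sup$ (union) of such basic pieces over all finite patterns, and then applying the $\sup^*_g$ transform using clause~(2) and clause~(3) of Proposition~\ref{Proposition: grey quantifiers} to commute the transform past the countable $\sup$ and reduce to basic open $U$'s of the form $[u']_{<r}$ with $u' \in \mathcal{O}(\mathbf{p}_{|k})$, one reduces the transform over $\Iso(\mathbb{U})$ to a situation where the perturbation lemma applies: moving $\mathbf{p}_{|k}$ by an isometry to within $r$ of a tuple $\bar q$ is, up to $\varepsilon$, the same as the existence of a point-tuple realizing the relevant approximate type near $\bar q$, which is expressible by a block of $\inf$ quantifiers. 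The $N$-Lipschitz bookkeeping is handled because the term $-Nd(g^{-1}\mathbf{p}_{|k}, u)$ contributes an $N$-Lipschitz dependence on $u$, and all the connectives used are $1$-Lipschitz in the remaining coordinates.

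\textbf{Inductive step.} Suppose the claim holds for all ranks $\beta < \alpha$. If $A \in \mathbf{\Pi}^0_\alpha$, write $A = 1 - A'$ with $A' \in \mathbf{\Sigma}^0_\alpha$ (or reduce to the $\mathbf{\Sigma}$ case directly); using clause~(1) of Proposition~\ref{Proposition: grey quantifiers}, $\sup^*_g[A(gM) - Nd(\cdots)]$ is expressed via an $\inf^*_g$ of a translate of $A'$, and one applies the dual form of the induction. If $A = \inf_n A_n$ with each $A_n \in \mathbf{\Pi}^0_{\beta_n}$, $\beta_n < \alpha$, the main work is to commute $\sup^*_g$ past the $\inf_n$. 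This does \emph{not} commute on the nose (that is exactly the asymmetry in clause~(3): $\sup^*$ commutes with $\sup_n$ but not with $\inf_n$). The standard device, following Vaught's dynamical proof as the authors signal, is to use clause~(2) to write $\sup^*_{g}$ as $\sup_m \inf^*_{g \in W_m}$ over a countable basis $\{W_m\}$ of $\Iso(\mathbb{U})$, and then on each basic piece $W_m$ exploit that $\inf^*$ does commute with $\inf_n$ (clause~(3) again) to pull the induction hypothesis through the $A_n$; reassembling gives a countable $\sup_m$ of countable $\inf_n$ of formulas produced by induction, which is again an $\mathcal{L}_{\omega_1\omega}$-formula provided the moduli of continuity stay uniformly bounded — and they do, because we have fixed the global Lipschitz constant $N$ throughout (this is precisely why $N$ is a parameter of the statement rather than something that grows with $\alpha$). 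Finally one checks that the resulting formula is $N$-Lipschitz, using that the category quantifiers and countable connectives preserve the modulus bound.

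\textbf{Main obstacle.} The hard part is the $\inf_n$ clause of the inductive step: getting the category quantifier $\sup^*_g$ to interact correctly with a countable infimum while (i) keeping everything inside $\mathcal{L}_{\omega_1\omega}$ (uniform boundedness of moduli — controlled by the fixed $N$), and (ii) correctly handling the ``$\exists^*$'' semantics so that the identity holds with equality, not merely inequality, for \emph{every} $M$ and \emph{every} $u$ (not just generic ones). The decomposition $\sup^*_g = \sup_m \inf^*_{g\in W_m}$ together with the perturbation lemma — which must be applied uniformly over the basic neighborhoods $[u']_{<r}$, $u' \in \mathcal{O}(\mathbf{p}_{|k})$, and must respect the $-Nd(g^{-1}\mathbf{p}_{|k},u)$ term — is where the real content lies, and where the specific homogeneity/definability properties of $\mathbb{U}$ are used; the rest is bookkeeping with Proposition~\ref{Proposition: grey quantifiers}. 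Theorem~\ref{thm:main} then follows by taking $A$ invariant and $k = 0$, since for invariant $A$ the Vaught transform collapses to $A$ itself.
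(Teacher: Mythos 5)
There is a genuine gap, and it sits exactly where you locate the ``main obstacle.'' Your inductive step for $A=\inf_n A_n$ proposes to write $\sup_{g}^{\ast}$ as $\sup_m\inf_{g\in W_m}^{\ast}$ over a countable basis and then commute $\inf^{\ast}$ with $\inf_n$, ``pulling the induction hypothesis through the $A_n$.'' But after that commutation you are holding expressions of the form $\inf_{g\in W_m}^{\ast}[A_n(gM)+\cdots]$, localized at crisp basic open sets $W_m\subset\Iso(\mathbb{U})$, whereas your induction hypothesis only supplies formulas for the weighted transforms $\sup_{g}^{\ast}[A_n(gM)-N'd(g^{-1}\mathbf{p}_{|k'},u')]$. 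Converting one into the other is not a formal consequence of Proposition~\ref{Proposition: grey quantifiers}; it is the real content of the proof. In the paper this is Lemma~\ref{Lemma: negation transform} together with Lemma~\ref{Lemma: inf}, and it requires two ideas your sketch never isolates: (i) one must pass to \emph{longer} tuples $\widetilde{k}\geq k$ and \emph{larger} Lipschitz constants $\widetilde{N}\geq N$, because the basic neighborhoods $[\widetilde{u}]_{<\widetilde{N}^{-1}}$ with $\widetilde{u}\in\mathcal{O}(\mathbf{p}_{|\widetilde{k}})$ are recovered from the weighted transforms only by letting $\widetilde{N},\widetilde{k}$ grow; and (ii) quantification over the orbit $\mathcal{O}(\mathbf{p}_{|\widetilde{k}})$ is not itself a formula, so Lemma~\ref{Lemma: perturbation} is needed to trade it for quantification over all of $\mathbb{U}^{\widetilde{k}}$ penalized by $m\tau_{\widetilde{k}}(\bar y,\mathbf{p}_{|\widetilde{k}})$, with the three-way equivalences of Lemma~\ref{Lemma: negation transform} giving exact equality for \emph{every} $M$ and $u$, not just generically. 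You gesture at the perturbation lemma for the base case, but the place it is indispensable is precisely this negation/$\inf_n$ step.

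Relatedly, your bookkeeping claim about $N$ is backwards. You assert the moduli stay uniformly bounded ``because we have fixed the global Lipschitz constant $N$ throughout,'' and that this is why $N$ is a parameter of the statement. In fact the reason the statement quantifies over all $N$ and $k$ is that the $\inf_n$/negation step \emph{consumes} the hypothesis for arbitrarily large $\widetilde{N}\geq N$ and $\widetilde{k}\geq k$: the interpolating formula is of the shape $\inf_{\widetilde{N}\geq N}\inf_{\widetilde{k}\geq k}\sup_{m\geq 1}\inf_{\bar y}\,[\,m\tau_{\widetilde{k}}(\bar y,\mathbf{p}_{|\widetilde{k}})+Nd(\bar y_{|k},\bar x)+\psi_{\widetilde{k},\widetilde{N}}(\bar y)\,]$, where the inner formulas $\psi_{\widetilde{k},\widetilde{N}}$ are only $\widetilde{N}$-Lipschitz; the result is $N$-Lipschitz (and the countable infima are legitimate $\mathcal{L}_{\omega_1\omega}$ operations) only because those high-Lipschitz variables are bound, not because $N$ was held fixed through the recursion. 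Without this mechanism your inductive step does not close. As a secondary remark, the paper does not induct on Borel rank at all: it shows the family $\mathcal{B}$ of grey sets satisfying the conclusion for all $k,N$ is closed under $q-A$, truncated linear combinations, and countable $\inf/\sup$, contains the quantifier-free evaluations at $\mathbf{p}_{|n}$, and then applies the grey monotone-class Lemma~\ref{Lemma: class Borel grey sets}; your base case and your $\sup_n$ direction are essentially fine and correspond to Lemma~\ref{Lemma: base case} and Lemma~\ref{Lemma: sup and inf in B}, but the rank induction could only be repaired by importing the same negation-transform machinery you have omitted.
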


Theorem~\ref{thm:main} follows as the special case when $k=0$ and $N=1$.
Indeed, if $A$ is an $\Iso(\mathbb{U})$-invariant grey subset of $\Mod(%
\mathcal{L},\mathbb{U})$, then%
\begin{equation*}
A(M)=\sup\nolimits_{g\in \Iso(\mathbb{U})}^{\ast }A(gM)\text{.}
\end{equation*}%
Therefore Theorem~\ref{thm:parameters} yields a sentence $\phi $ such that $%
A(M)=\phi ^{M}$ for $M\in \Mod(\mathcal{L},\mathbb{U})$.

In the proof of the theorem we will need the following perturbation result,
which is similar to \cite[Lemma 2.3]{ivanov_polish_2013}. In the statement,
we denote by $\tau _{k}\left( \bar{x},\bar{y}\right) $ the quantifier-free
formula with $2k$ free variables given by 
\begin{equation*}
\max_{i,j\in k}\left\vert d\left( x_{i},x_{j}\right) -d\left(
y_{i},y_{j}\right) \right\vert \text{.}
\end{equation*}%
Observe that $\tau (\bar{x},\mathbf{p}_{|k})$ can be regarded as a
quantifier-free formula with $k$ variables.

\begin{lemma}
\label{Lemma: perturbation} For all $\varepsilon >0$, if $u,w\in \mathbb{U}%
^{k}$ are such that $\tau _{k}\left( u,w\right) <\varepsilon $, then there
is $g\in \Iso(Y)$ such that $d(u,gw)<3\varepsilon $.
\end{lemma}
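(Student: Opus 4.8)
The statement is a purely geometric fact about the Urysohn sphere, so the natural strategy is to exploit the two defining features of $\mathbb{U}$: it is \emph{ultrahomogeneous} (any isometry between finite subsets extends to a global isometry) and \emph{universal} (it contains an isometric copy of every separable metric space of diameter $\le 1$). The plan is to build, from the hypothesis $\tau_k(u,w)<\varepsilon$, an auxiliary finite metric space $Z$ of diameter $\le 1$ together with two isometric embeddings of $\{u_0,\dots,u_{k-1}\}$ and $\{w_0,\dots,w_{k-1}\}$ into $Z$ whose images are pointwise close (within $3\varepsilon$, or whatever constant the triangle-inequality bookkeeping produces), then use universality to realize $Z$ inside $\mathbb{U}$ and ultrahomogeneity to move $w$ onto its image in that copy. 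The isometry $g$ obtained this way will send $w$ to a tuple within the prescribed distance of $u$.

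First I would make the numerology precise. The condition $\tau_k(u,w)<\varepsilon$ says $|d(u_i,u_j)-d(w_i,w_j)|<\varepsilon$ for all $i,j<k$. The obstacle to a direct application of ultrahomogeneity is that $u$ and $w$ are only \emph{approximately} isometric, not isometric, so no global isometry can send one exactly to the other. The remedy is to pass through an intermediate finite metric space. Concretely, I would consider the abstract set $\{a_0,\dots,a_{k-1}\}\cup\{b_0,\dots,b_{k-1}\}$ with $d(a_i,a_j):=d(u_i,u_j)$, $d(b_i,b_j):=d(w_i,w_j)$, and $d(a_i,b_j):=$ some suitable quantity (e.g.\ $\min_\ell(d(u_i,u_\ell)+\varepsilon/2+\ldots)$, or more simply using that $d(a_i,b_i)$ should be small and then fixing the cross distances by an infimum-of-paths formula) chosen so that (i) the triangle inequality holds, (ii) $d(a_i,b_i)\le c\varepsilon$ for an explicit constant $c$, and (iii) all distances stay in $[0,1]$ (here one uses that $\mathbb{U}$ has diameter $1$ and the original tuples do too, with a little slack). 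The standard trick is to define the cross-distances by the shortest-path (metric closure) formula, which automatically enforces the triangle inequality; one then checks that $d(a_i,b_i)$ comes out at most $\varepsilon$ or so and that the bound $3\varepsilon$ absorbs the accumulated error. This metric-amalgamation computation is the main obstacle — not because it is deep, but because the constant and the need to keep everything below $1$ require care; this is presumably also where the hypothesis is used that the metric on $\mathbb{U}$ is bounded by $1$ (as opposed to the unbounded Urysohn space).

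Once $Z$ is constructed, the rest is routine. By universality of $\mathbb{U}$ there is an isometric embedding $\iota\colon Z\to\mathbb{U}$. The composite $i\mapsto \iota(a_i)$ is an isometric copy of $u$ inside $\mathbb{U}$; by ultrahomogeneity of $\mathbb{U}$ there is $h\in\Iso(\mathbb{U})$ with $h(\iota(a_i))=u_i$ for all $i<k$. Likewise $i\mapsto\iota(b_i)$ is an isometric copy of $w$, so there is $h'\in\Iso(\mathbb{U})$ with $h'(w_i)=\iota(b_i)$. Set $g:=h\circ h'$. Then $d(u_i,g(w_i))=d(h(\iota(a_i)),h(\iota(b_i)))=d(\iota(a_i),\iota(b_i))=d(a_i,b_i)\le 3\varepsilon$, giving $d(u,gw)<3\varepsilon$ in the maximum metric and completing the proof. (The lemma as stated writes $\Iso(Y)$; since the running hypothesis is $Y=\mathbb{U}$, this is exactly $\Iso(\mathbb{U})$, and the remark after Theorem~\ref{thm:main} indicates the proof generalizes to approximately ultrahomogeneous $Y$ via \cite[Proposition~9.19]{ben_yaacov_model_2008} in place of exact ultrahomogeneity.)
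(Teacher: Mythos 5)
Your proposal is correct and is essentially the paper's own argument: the paper builds exactly the amalgam you describe, with cross-distances $d(u_i,w_j)=\min_{n}\bigl(d(u_i,u_n)+\varepsilon+d(w_n,w_j)\bigr)$, embeds it into $\mathbb{U}$, and invokes ultrahomogeneity to produce $g$. The only (immaterial) difference is packaging: the paper extends the given embedding of $\{u_i\}$ to all of $Z$ via finite injectivity of $\mathbb{U}$ and then applies ultrahomogeneity once, whereas you embed $Z$ by universality and apply ultrahomogeneity twice.
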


\begin{proof}
Consider the metric space $Z$ obtained from the disjoint union of $\left\{
u_{i}:i\in k\right\} $ and $\left\{ w_{i}:i\in k\right\} $ as in \cite[%
Example 56]{petersen_riemannian_2006}, where%
\begin{equation*}
d\left( u_{i},w_{j}\right) =\min_{n\in k}\left( d(u_{i},u_{n})+\varepsilon
+d(w_{n},w_{j})\right) \text{.}
\end{equation*}%
By the finite injectivity of Urysohn space \cite{melleray_geometry_2007} the
isometric embedding of $\left\{ u_{i}:i\in k\right\} $ in $\mathbb{U}$
extends to an isometric embedding of $Z$ into $\mathbb{U}$. This gives $%
\widetilde{w}=\left( \widetilde{w}_{j}\right) _{j\in k}\in \mathbb{U}^{k}$
such that%
\begin{align*}
d(\widetilde{w},u)& <3\varepsilon \text{, and} \\
d(\widetilde{w}_{i},\widetilde{w}_{j})& =d(w_{i},w_{j})
\end{align*}%
for $i,j\in k$. Since $\mathbb{U}$ is ultrahomogeneous, there is an isometry 
$g\in \Iso(\mathbb{U})$ such that $gw=\widetilde{w}$ and hence $%
d(gw,u)<3\varepsilon $.
\end{proof}


We remark that Lemma~\ref{Lemma: perturbation} together with \cite[Proposition~9.19]{ben_yaacov_model_2008} implies that $\mathcal{O}(\mathbf{p}_{|k})$ is a definable subset of $\mathbb{U}^{k}$ in the sense of \cite[Definition~9.16]{ben_yaacov_model_2008}.

\section{The proof\label{Section: proof}}

\label{Section: the proof}

In this section, we give the proof of Theorem~\ref{thm:parameters}. To
begin, we let $\mathcal{B}$ denote the family of Borel grey subsets of $\Mod(%
\mathcal{L},\mathbb{U})$ which satisfy the conclusion of Theorem~\ref%
{thm:parameters} for all $k\in \mathbb{N}$. Our strategy will be to show
that $\mathcal{B}$ has the following properties.

\begin{enumerate}
\item \label{Clause: negation}If $A\in \mathcal{B}$ then $q-A\in \mathcal{B}$
for every $q\in \left[ 0,1\right] $ (Section~\ref{Subsection: negation});

\item \label{Clause: base case}For every $n\in \mathbb{N}$ and every
quantifier-free $\mathcal{L}$$_{\omega \omega }$-formula $\phi \left( \bar{x}%
\right) $ with $n$ free variables the grey subset $E_{\phi ,\mathbf{p}_{|n}}$
of $\Mod\left( \mathcal{L},Y\right) $ defined by 
\begin{equation*}
E_{\phi ,\mathbf{p}_{|n}}(M)=\phi ^{M}(\mathbf{p}_{|n})
\end{equation*}%
is in $\mathcal{B}$ (Section~\ref{Subsection: base case});

\item \label{Clause: linear combination}If $A,B\in \mathcal{B}$ and $\lambda
,\mu \in \left[ 0,1\right] $ then $\lambda A+\mu B\in \mathcal{B}$ (Section~%
\ref{Subsection: linear combinations});

\item \label{Clause: sup and inf}If $A_{n}\in \mathcal{B}$ for every $n\in
\omega $, then $\inf_{n}A_{n}\in \mathcal{B}$ and $\sup_{n}A_{n}\in \mathcal{%
\ B}$ (Section~\ref{Subsection: infima and suprema}).
\end{enumerate}

We once again remind the reader that in (3), as everywhere, the arithmetic
operations denote their truncated versions.

We now show that these facts ensure that the family $\mathcal{B}$ contains
all Borel grey subsets of $\Mod(\mathcal{L},\mathbb{U})$. For this we need
the following lemma. In the statement, recall that a family of functions 
\emph{separates the points} of $X$ if for every distinct $x,y\in X$ there is 
$f$ in the family such that $f(x)\neq f(y)$.

\begin{lemma}
\label{Lemma: class Borel grey sets}Suppose that $X$ is a standard Borel
space, $\mathcal{F}$ is a family of Borel grey sets of $X$, and $\mathcal{F}%
_{0}\subset\mathcal{F}$ is a countable subfamily that separates the points
of $X$. Assume further that $\mathcal{F}$ satisfies the following closure
properties:

\begin{enumerate}
\item \label{Clause: negation F}If $A\in \mathcal{F}$ then $q-A\in \mathcal{F%
}$ for every $q\in \left[ 0,1\right] $;

\item \label{Clause: constants F}Every constant function belongs to $%
\mathcal{F}$.

\item \label{Clause: linear combination F}If $A,B\in \mathcal{F}$ and $%
\lambda ,\mu \in \left[ 0,1\right] $ then $\lambda A+\mu B\in \mathcal{F}$;

\item \label{Clause: sup and inf F}If $A_{n}\in \mathcal{F}$ for every $n\in
\omega $, then $\inf_{n}A_{n}\in \mathcal{F}$ and $\sup_{n}A_{n}\in \mathcal{%
\ F}$;
\end{enumerate}

Then $\mathcal{F}$ contains all Borel grey sets.
\end{lemma}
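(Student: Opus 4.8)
The plan is to show that $\mathcal{F}$ contains every Borel grey set by a transfinite induction on the Borel rank, following the classical pattern (as in the proof that a $\sigma$-algebra generated by a point-separating family of Borel sets is the whole Borel $\sigma$-algebra, e.g. \cite[Theorem~12.1.8]{gao_invariant_2009} or the Borel isomorphism theorem). The key observation is that the level-set operation translates statements about grey sets into statements about ordinary Borel sets. First I would record the reductions one needs: for a grey set $A \in \mathcal{F}$ and $r \in \mathbb{Q} \cap [0,1]$, one wants to know that $A_{<r}$ and $A_{\le r}$ are Borel in the usual sense; this is clear since $A$ is Borel as a function. Conversely, I want to produce, for a suitable generating family of Borel subsets $P \subseteq X$, a grey set in $\mathcal{F}$ whose level sets recover $P$ — typically the $\{0,1\}$-valued indicator-type grey set $\mathbf{1}_{X\setminus P}$ (value $0$ on $P$, value $1$ off $P$), since $(\mathbf{1}_{X\setminus P})_{<r} = P$ for $0 < r \le 1$.

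The core of the argument is to first establish the \textbf{black-and-white} case: let $\mathcal{A}$ be the collection of Borel subsets $P \subseteq X$ such that the grey set $g_P$ defined by $g_P(x) = 0$ if $x \in P$ and $g_P(x) = 1$ otherwise belongs to $\mathcal{F}$. Using closure property~(\ref{Clause: negation F}) with $q = 1$ we get that $\mathcal{A}$ is closed under complements (since $1 - g_P = g_{X \setminus P}$). Using closure under countable infima from~(\ref{Clause: sup and inf F}), if $P_n \in \mathcal{A}$ then $\inf_n g_{P_n} = g_{\bigcup_n P_n}$, so $\mathcal{A}$ is closed under countable unions; hence $\mathcal{A}$ is a $\sigma$-algebra. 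Now I claim $\mathcal{A}$ contains a countable separating family of Borel sets: from the countable point-separating subfamily $\mathcal{F}_0 \subseteq \mathcal{F}$, for each $A \in \mathcal{F}_0$ and each rational $r$, the level set $A_{<r}$ is Borel, and the family $\{A_{<r} : A \in \mathcal{F}_0,\ r \in \mathbb{Q}\}$ separates points of $X$ (if $A(x) \ne A(y)$ then some rational $r$ lies strictly between them). The issue is whether these level sets lie in $\mathcal{A}$. This is where closure property~(\ref{Clause: linear combination F}) together with~(\ref{Clause: constants F}) enters: from $A \in \mathcal{F}$ one can build, by taking an appropriate affine rescaling $\min\{1, \max\{0, N(A - r)\}\}$ — obtained via the truncated arithmetic allowed in $\mathcal{F}$ — and then a countable supremum over $N \in \omega$, a $\{0,1\}$-valued grey set equal to $g_{A_{\le r}}$ or $g_{A_{<r}}$; closure~(\ref{Clause: sup and inf F}) finishes this. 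So $\mathcal{A}$ is a $\sigma$-algebra containing a countable separating family, hence by the standard fact (a countably generated separating $\sigma$-algebra on a standard Borel space is the full Borel $\sigma$-algebra; cf.\ \cite[Theorem~14.12]{kechris_classical_1995} or \cite[Proposition~12.1.7]{gao_invariant_2009}) $\mathcal{A}$ contains all Borel subsets of $X$, i.e.\ $g_P \in \mathcal{F}$ for every Borel $P$.

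Finally I would lift this to arbitrary Borel grey sets. Given a Borel grey set $A \sqsubseteq X$, for each rational $r \in (0,1]$ the set $P_r := A_{<r}$ is Borel, so $g_{P_r} \in \mathcal{F}$. One checks the pointwise identity $A(x) = \inf\{ r \in \mathbb{Q} \cap (0,1] : x \in P_r \} = \inf_{r} \big( \max\{ r, g_{P_r}(x) \} \big)$, more carefully $A = \inf_{r \in \mathbb{Q}\cap(0,1]} \big( g_{P_r} + r \big)$ using truncated addition — since $g_{P_r}(x) + r$ equals $r$ when $A(x) < r$ and equals $1 \ge A(x)$ otherwise, and $\inf$ over all rationals $r > A(x)$ of $r$ is exactly $A(x)$. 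Now $r$ is a constant function, hence in $\mathcal{F}$ by~(\ref{Clause: constants F}); by~(\ref{Clause: linear combination F}) (with $\lambda = \mu = 1$) $g_{P_r} + r \in \mathcal{F}$; and by the countable-infimum half of~(\ref{Clause: sup and inf F}) the infimum over the countably many rationals is in $\mathcal{F}$. Therefore $A \in \mathcal{F}$.

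The main obstacle — and the one step deserving genuine care rather than routine verification — is the passage in the black-and-white step from a grey set $A \in \mathcal{F}$ to the two-valued grey sets $g_{A_{<r}}$ and $g_{A_{\le r}}$: one must check that the affine rescalings $\min\{1,\max\{0,N(A-r)\}\}$ are actually expressible using only the operations guaranteed by~(\ref{Clause: negation F})–(\ref{Clause: linear combination F}) under the truncation convention (multiplication by the integer $N$ is an $N$-fold truncated sum $A + \cdots + A$, and subtracting the constant $r$ uses~(\ref{Clause: negation F}) or~(\ref{Clause: constants F}) plus~(\ref{Clause: linear combination F})), and that $\sup_N$ of these recovers exactly the indicator of $\{A > r\}$ while $\inf$-type combinations recover $\{A \ge r\}$; one should be slightly careful about whether one lands on the $<$ or $\le$ level set, but since the family $\{A_{<r}\}_{A \in \mathcal{F}_0, r \in \mathbb{Q}}$ already separates points, either version suffices to conclude $\mathcal{A}$ is the full Borel $\sigma$-algebra.
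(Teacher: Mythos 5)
Your proof is correct and follows essentially the same route as the paper's: reduce to $\{0,1\}$-valued indicators of Borel sets, show the family of Borel sets whose indicators lie in $\mathcal{F}$ is a $\sigma$-algebra containing the countable point-separating family of rational level sets of members of $\mathcal{F}_0$ (obtained as $\sup_m$ of truncated affine rescalings of $A$), and conclude by the Blackwell--Mackey theorem. The only divergence is minor: where the paper passes from indicators to general Borel grey sets by approximating any such set as a pointwise limit of linear combinations of $\{0,1\}$-valued grey sets (as in Kechris, Theorem 11.6), you use the explicit identity $A=\inf_{r\in\mathbb{Q}\cap(0,1]}\bigl(g_{A_{<r}}+r\bigr)$ with truncated addition, which works equally well.
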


\begin{proof}
By induction it follows from \eqref{Clause: linear combination F} that $%
\mathcal{F}$ is closed under arbitrary finite linear combinations with
coefficients in $\left[ 0,1\right] $. Moreover one can deduce from %
\eqref{Clause: sup and inf F} that $\mathcal{F}$ is closed under pointwise
limits. Arguing as in the proof of \cite[Theorem 11.6]%
{kechris_classical_1995} one can show that any Borel grey set is a pointwise
limit of linear combinations of $\left\{ 0,1\right\} $-valued Borel grey
sets. Therefore it is enough to show that for every Borel subset $U$ of $X$,
the zero-indicator $\mathbf{0}_{U}$ of $U$ lies in $\mathcal{F}$. (Here the
zero indicator $\mathbf{0}_{U}$ is the function constantly equal to $0$ on $%
U $ and constantly equal to $1$ on $X\smallsetminus U$; see \cite[%
Notation~1.2]{ben_yaacov_grey_2014}.)

For this, let $\mathcal{U}$ denote the family of Borel subsets $U$ of $X$
such that $\mathbf{0}_{U}\in \mathcal{F}$. Also let $\mathcal{U}_{0}$ denote
the family of level sets $A_{\leq q}$ for $A\in \mathcal{F}_{0}$ and $q\in 
\mathbb{Q}\cap \left[ 0,1\right] $. It follows from 
\eqref{Clause: negation
F} and \eqref{Clause: sup and inf F} that $\mathcal{U}$ is a $\sigma $%
-algebra of Borel subsets of $X$. Moreover since $\mathcal{F}_{0}$ separates
the points of $X$, $\mathcal{U}_{0}$ is a countable family of Borel sets
that separate the points of $X$. By \cite[Theorem 3.3]{mackey_borel_1957} in
order to show that $\mathcal{U}$ contains all Borel sets it is enough to
prove that $\mathcal{U}_{0}$ is contained in $\mathcal{U}$. For this,
observe that for each $A\in \mathcal{F}_{0}$ and $q\in \mathbb{Q}\cap \left[
0,1\right] $ the indicator function $\mathbf{0}_{A_{\leq q}}$ of the level
set $A_{\leq q}$ is $\sup_{m\in \mathbb{N}}m\left( A-q\right) $. By %
\eqref{Clause: constants F}, and \eqref{Clause: linear combination F} we
have $m\left( A-q\right) \in \mathcal{F}$ for every $m\in \mathbb{N}$ and
hence $\mathbf{0}_{A_{\leq q}}\in \mathcal{F}$ by \eqref{Clause: sup and inf}%
. Therefore $A_{\leq q}\in \mathcal{U}$, as claimed.
\end{proof}

We may now give the conclusion of the proof of the main theorem.

\begin{proof}[Proof of Theorem~\protect\ref{thm:parameters}]
By Lemmas~\ref{Lemma: negation in B}, \ref{Lemma: base case}, \ref{Lemma:
linear combinations in B}, and \ref{Lemma: sup and inf in B} below, the
family $\mathcal{B}$ of grey sets satisfying the conclusion of the theorem
satisfies hypotheses (1), (2), (3), (4) of Lemma~\ref{Lemma: class Borel
grey sets}. Let $\mathcal{B}_{0}$ denote the family of grey subsets of $\Mod(%
\mathcal{L},\mathbb{U})$ of the form 
\begin{align*}
M& \mapsto R^{M}\left( p_{i_{0}},\ldots ,p_{i_{n-1}}\right) \text{, or} \\
M& \mapsto d\left( f^{M}\left( p_{i_{0}},\ldots ,p_{i_{n-1}}\right)
,p_{i_{n}}\right)
\end{align*}%
where $i_{0},\ldots ,i_{n}\in \mathbb{N}$ and $f,R$ are $n$-ary symbols of $%
\mathcal{L}$. It is straightforward to verify that $\mathcal{B}_{0}$
separates the points of $\Mod(\mathcal{L},\mathbb{U})$. Moreover, by Lemma~%
\ref{Lemma: base case}, $\mathcal{B}_{0}$ is contained in $\mathcal{B}$. It
therefore follows from Lemma \ref{Lemma: class Borel grey sets} that $%
\mathcal{B}$ contains all Borel grey sets, as desired.
\end{proof}

We now proceed to verify each of the closure properties outlined at the
beginning of this section.

\subsection{Negation\label{Subsection: negation}}

Recall that $\mathcal{O}(\mathbf{p}_{|k})$ for $k\in \omega $ denotes the
orbit of $\mathbf{p}_{|k}$ under the action $\Iso(\mathbb{U}%
)\curvearrowright \mathbb{U}^{k}$.

\begin{lemma}
\label{Lemma: negation transform}Suppose that $A$ is a grey subset of $\Mod(%
\mathcal{L},\mathbb{U})$, $k,N\in \omega $ with $N\geq 1$, and $u\in \mathbb{%
U}^{k}$. For any $t\in \left[ 0,1\right] $, the following statements are
equivalent:

\begin{enumerate}
\item $\inf\nolimits_{g\in \Iso(\mathbb{U})}^{\ast }[A(gM)+Nd(u,g^{-1}%
\mathbf{p}_{|k})]<t$;

\item there are $\widetilde{k}\geq k$, $\widetilde{N}\geq N$, and $%
\widetilde{u}\in \mathcal{O}(\mathbf{p}_{|\widetilde{k}})$ such that%
\begin{equation*}
Nd(\widetilde{u}_{|k},u)+\sup\nolimits_{g\in \Iso(\mathbb{U})}^{\ast }[A(gM)-%
\widetilde{N}d(\widetilde{u},g^{-1}\mathbf{p}_{|\widetilde{k}})]<t\text{;%
\label{Eqn: star and triangle}}
\end{equation*}

\item there are $t_{0}<t$, $\widetilde{k}\geq k$, $\widetilde{N}\geq N$,
such that for every $m\geq 1$ there is $\widetilde{u}\in \mathbb{U}^{%
\widetilde{k}}$ such that%
\begin{equation*}
\qquad m\tau _{\widetilde{k}}(\widetilde{u},\mathbf{p}_{|\widetilde{k}%
})+Nd\left( \widetilde{u}_{|k},u\right) +\sup\nolimits_{g\in \Iso(\mathbb{U}%
)}^{\ast }[A(gM)-\widetilde{N}d(\widetilde{u},g^{-1}\mathbf{p}_{|\widetilde{k%
}})]<t_{0}\text{.}
\end{equation*}
\end{enumerate}
\end{lemma}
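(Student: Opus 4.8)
The plan is to prove Lemma~\ref{Lemma: negation transform} by establishing the cycle of implications $(3)\Rightarrow(2)\Rightarrow(1)\Rightarrow(3)$, using the perturbation result Lemma~\ref{Lemma: perturbation} as the key geometric input and the basic identities for the grey category quantifiers from Proposition~\ref{Proposition: grey quantifiers}. The underlying idea is that $\inf^\ast$ and $\sup^\ast$ are related by negation on the truncated interval, and that the orbit $\mathcal{O}(\mathbf{p}_{|\widetilde k})$ is ``dense enough'' inside $\mathbb{U}^{\widetilde k}$ (as captured by Lemma~\ref{Lemma: perturbation}) that one may pass freely between infima over the orbit and infima over all of $\mathbb{U}^{\widetilde k}$ at the cost of adding a small multiple of $\tau_{\widetilde k}$.

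\textbf{$(1)\Rightarrow(3)$.} Suppose $\inf_{g}^{\ast}[A(gM)+Nd(u,g^{-1}\mathbf p_{|k})]<t$. By definition of the grey quantifier $\inf^\ast$ this means $\exists^\ast g$ with $A(gM)+Nd(u,g^{-1}\mathbf p_{|k})<t$; fix $t_0<t$ so that comeagerly many $g$ satisfy the strict inequality with $t_0$ in place of $t$. Now pick any $\widetilde k\ge k$ and any $\widetilde N\ge N$ (for concreteness one can take $\widetilde k=k$, $\widetilde N=N$, though the extra room will be convenient). Given $m\ge 1$, the key point is to produce $\widetilde u\in\mathbb{U}^{\widetilde k}$ witnessing clause (3). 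The natural candidate is obtained from a generic $g$ in the relevant comeager set: set $\widetilde u$ to be (an approximation of) $g^{-1}\mathbf p_{|\widetilde k}$, so that $\tau_{\widetilde k}(\widetilde u,\mathbf p_{|\widetilde k})$ can be made as small as we like (it is $0$ on the nose if $\widetilde u=g^{-1}\mathbf p_{|\widetilde k}$, since $g^{-1}$ is an isometry), $d(\widetilde u_{|k},u)$ is close to $d(g^{-1}\mathbf p_{|k},u)$, and $\sup_{h}^{\ast}[A(hM)-\widetilde N d(\widetilde u,h^{-1}\mathbf p_{|\widetilde k})]$ is controlled because the competing term vanishes near $h=g$. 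Here one uses that $[u]_{<r}$ for $u$ in the orbit forms a basis, so the relevant category-quantifier statements localize.

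\textbf{$(3)\Rightarrow(2)$.} Fix $t_0<t$, $\widetilde k$, $\widetilde N$ as in (3) and apply the hypothesis with $m$ large. The term $m\,\tau_{\widetilde k}(\widetilde u,\mathbf p_{|\widetilde k})<t_0$ forces $\tau_{\widetilde k}(\widetilde u,\mathbf p_{|\widetilde k})<t_0/m$, which by Lemma~\ref{Lemma: perturbation} yields an isometry $g_0\in\Iso(\mathbb U)$ with $d(\widetilde u,g_0\mathbf p_{|\widetilde k})<3t_0/m$; equivalently $d(g_0^{-1}\widetilde u',\mathbf p_{|\widetilde k})$ is small, so $\widetilde u':=g_0^{-1}(\text{something})\in\mathcal{O}(\mathbf p_{|\widetilde k})$ is close to $\widetilde u$. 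One then replaces $\widetilde u$ by this nearby orbit element: by the uniform continuity of the map $v\mapsto \sup_h^{\ast}[A(hM)-\widetilde N d(v,h^{-1}\mathbf p_{|\widetilde k})]$ (which is $\widetilde N$-Lipschitz in $v$, since $d(v,\cdot)$ is $1$-Lipschitz in $v$ and the grey $\sup^\ast$ preserves such estimates by Proposition~\ref{Proposition: grey quantifiers}) and of $v\mapsto Nd(v_{|k},u)$, the quantity in (2) differs from the $m$-free part of the quantity in (3) by at most $C\widetilde N t_0/m$; taking $m$ large enough this stays $<t$. The main subtlety is bookkeeping the constants so the final bound is genuinely $<t$ rather than merely $\le t$ — one absorbs the slack into the gap $t-t_0$.

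\textbf{$(2)\Rightarrow(1)$.} This is the most computational direction. Suppose we have $\widetilde k\ge k$, $\widetilde N\ge N$, $\widetilde u\in\mathcal{O}(\mathbf p_{|\widetilde k})$ with $Nd(\widetilde u_{|k},u)+\sup_{g}^{\ast}[A(gM)-\widetilde N d(\widetilde u,g^{-1}\mathbf p_{|\widetilde k})]<t$. Since $\widetilde u$ lies in the orbit, pick $h_0$ with $h_0^{-1}\mathbf p_{|\widetilde k}=\widetilde u$, equivalently $[\widetilde u](h_0)=0$; then $[\widetilde u]_{<\delta}$ is a nonempty open subset of $\Iso(\mathbb U)$ for every $\delta>0$. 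On this set the term $\widetilde N d(\widetilde u,g^{-1}\mathbf p_{|\widetilde k})<\widetilde N\delta$ is small, so the $\sup^\ast$ hypothesis gives, for comeagerly many $g$ in (a further open subset of) $[\widetilde u]_{<\delta}$, the estimate $A(gM)<\widetilde N\delta+\big(\sup^\ast[\cdots]\big)$. For such $g$ one has $g^{-1}\mathbf p_{|k}$ within $\delta$ of $\widetilde u_{|k}$, hence $Nd(u,g^{-1}\mathbf p_{|k})\le Nd(u,\widetilde u_{|k})+N\delta$. Adding the two estimates, $A(gM)+Nd(u,g^{-1}\mathbf p_{|k})< Nd(u,\widetilde u_{|k})+\sup^\ast[\cdots]+(\widetilde N+N)\delta<t$ once $\delta$ is small, and since this holds on a nonempty open-modulo-meager set we conclude $\exists^\ast g$, i.e. (1). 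Throughout, the identities of Proposition~\ref{Proposition: grey quantifiers}(1),(2) let one rewrite the $\sup^\ast$ over all of $\Iso(\mathbb U)$ as a supremum of localized $\inf^\ast$'s over basic open sets $[w]_{<r}$, which is what makes the ``comeagerly many $g$ in a basic neighborhood'' arguments legitimate.

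I expect the main obstacle to be $(1)\Rightarrow(3)$: one must manufacture, for each $m$, a single tuple $\widetilde u$ that simultaneously makes $m\,\tau_{\widetilde k}$ negligible, keeps $Nd(\widetilde u_{|k},u)$ essentially equal to the value coming from a generic $g$, and controls a $\sup^\ast$ expression. The resolution is to take $\widetilde u$ literally of the form $g^{-1}\mathbf p_{|\widetilde k}$ for a generic $g$ in the comeager set supplied by (1), so that $\tau_{\widetilde k}(\widetilde u,\mathbf p_{|\widetilde k})=0$ identically (isometry invariance of the distance pattern), and then argue that the $\sup^\ast$ term is small because for $h$ near $g$ the penalty $\widetilde N d(g^{-1}\mathbf p_{|\widetilde k},h^{-1}\mathbf p_{|\widetilde k})$ dominates and suppresses any large value of $A(hM)$ — this is exactly where one needs $\widetilde N\ge N$ and the freedom to take $\widetilde N$ large. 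Everything else is a routine combination of the Lipschitz estimates for grey category quantifiers and the perturbation lemma.
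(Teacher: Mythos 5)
Your cycle $(3)\Rightarrow(2)\Rightarrow(1)\Rightarrow(3)$ matches the paper's structure up to relabeling: your $(3)\Rightarrow(2)$ (perturb $\widetilde{u}$ to an orbit element via Lemma~\ref{Lemma: perturbation} and absorb the $O(\widetilde{N}/m)$ error into $t-t_{0}$) and your $(2)\Rightarrow(1)$ (use that $[\widetilde{u}]_{<\delta}$ is nonempty open and let $\delta\to 0$) are essentially the arguments in the paper and are fine in outline. The genuine gap is in your $(1)\Rightarrow(3)$, which is the substantive direction (the paper's $(1)\Rightarrow(2)$), and as written it does not go through.

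Concretely: from (1) you only get \emph{nonmeagerly} many $g$ with $A(gM)+Nd(u,g^{-1}\mathbf{p}_{|k})<t$, not comeagerly many, so your opening reduction is already wrong; the needed step is to split $r+s<t$, use the Baire property of $\{g: A(gM)<r\}$ to find a nonempty open $U\subset [u]_{<sN^{-1}}$ on which $A(gM)<r$ holds comeagerly, and then --- this is the point your sketch omits --- choose $\widetilde{k}\geq k$ and $\widetilde{N}\geq N$ \emph{so large} that the basic neighborhood $\{g: \widetilde{N}d(g^{-1}\mathbf{p}_{|\widetilde{k}},g_{0}^{-1}\mathbf{p}_{|\widetilde{k}})<1\}$ of a chosen $g_{0}\in U$ is contained in $U$. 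Only with that containment does one get $\sup^{\ast}_{g}[A(gM)-\widetilde{N}d(\widetilde{u},g^{-1}\mathbf{p}_{|\widetilde{k}})]\leq r$ with $\widetilde{u}=g_{0}^{-1}\mathbf{p}_{|\widetilde{k}}$: outside that neighborhood the (truncated) penalty saturates and kills the difference, while inside it $A(gM)<r$ comeagerly. Your remark that ``for concreteness one can take $\widetilde{k}=k$, $\widetilde{N}=N$'' is false in general: if $A(\cdot\,M)$ is close to $1$ at points just outside $U$ where the penalty computed from only the first $k$ points with constant $N$ is still small, the $\sup^{\ast}$ term is close to $1$ and (2)/(3) fail with those parameters --- indeed the whole reason the lemma quantifies over $\widetilde{k},\widetilde{N}$ is that they must grow with $U$. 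Your stated mechanism is also reversed: for $h$ near $g_{0}$ the penalty is \emph{small} (no matter how large $\widetilde{N}$ is), so it cannot ``suppress'' $A(hM)$ there; what controls $A$ near $g_{0}$ is the comeager bound on $U$, and the large $\widetilde{N}$ and, crucially, the large $\widetilde{k}$ (which you never mention --- increasing $\widetilde{N}$ alone does not give a neighborhood basis) serve only to discard contributions from outside $U$. Without this localization-plus-containment argument, the key bound on the $\sup^{\ast}$ term is unjustified.
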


\begin{proof}
(1)$\Rightarrow $(2) Suppose that 
\begin{equation*}
\inf\nolimits_{g\in \Iso(\mathbb{U})}^{\ast }[A(gM)+Nd(u,g^{-1}\mathbf{p}%
_{|k})]<t.
\end{equation*}%
Thus there are $s,r\in \left[ 0,1\right] $ such that $s+r<t$ and $\exists
^{\ast }g\in \Iso(\mathbb{U})$ such that $A(gM)<r$ and $Nd(u,g^{-1}\mathbf{p}%
_{|k})<s$. In particular there is a nonempty open $U\subset \left[ u\right]
_{<sN^{-1}}$ such that $\forall ^{\ast }g\in U$, $A(gM)<r$. Pick $g_{0}\in U$
and observe that $Nd(u,g_{0}^{-1}\mathbf{p}_{|k})<s$. Define $\widetilde{k}%
\geq k$ and $\widetilde{N}\geq N$ such that if $g\in \Iso(\mathbb{U})$ is
such that $\widetilde{N}d(g^{-1}\mathbf{p}_{|\widetilde{k}},g_{0}^{-1}%
\mathbf{p}_{|\widetilde{k}})<1$ then $g\in U$. Define $\widetilde{u}%
=g_{0}^{-1}\mathbf{p}_{|\widetilde{k}}\in \mathcal{O}(\mathbf{p}_{|%
\widetilde{k}})$. Observe that%
\begin{equation*}
Nd\left( \widetilde{u}_{|k},u\right) =Nd(g_{0}^{-1}\mathbf{p}_{|k},u)<s\text{%
.}
\end{equation*}%
Moreover $\forall ^{\ast }g\in \left[ \widetilde{u}\right] _{<\widetilde{N}%
^{-1}}$, 
\begin{equation*}
A(gM)<r\leq r+\widetilde{N}d(\widetilde{u},g^{-1}\mathbf{p}_{|\widetilde{k}})
\end{equation*}%
Therefore%
\begin{equation*}
\sup\nolimits_{g\in \Iso(\mathbb{U})}^{\ast }[A(gM)-\widetilde{N}d(%
\widetilde{u},g^{-1}\mathbf{p}_{|\widetilde{k}})]\leq r
\end{equation*}%
and hence%
\begin{equation*}
Nd\left( \widetilde{u}_{|k},u\right) +\sup\nolimits_{g\in \Iso(\mathbb{U}%
)}^{\ast }[A(gM)-\widetilde{N}d(\widetilde{u},g^{-1}\mathbf{p}_{|\widetilde{k%
}})]\leq r+s<t\text{.}
\end{equation*}

(2)$\Rightarrow $(3) This is obvious, since $\widetilde{u}\in \mathcal{O}(%
\mathbf{p}_{|k})$ implies $\tau _{\widetilde{k}}(\widetilde{u},\mathbf{p}_{|%
\widetilde{k}})=0$.

(3)$\Rightarrow $(2) By hypothesis there are $t_{0}<t$, $\widetilde{k}\geq k$%
, $\widetilde{N}\geq N$, such that for every $m\geq 1$ there is $\widetilde{u%
}\in \mathbb{U}^{\widetilde{k}}$ such that%
\begin{equation*}
m\tau _{\widetilde{k}}\left( \widetilde{u},\mathbf{p}_{|\widetilde{k}%
}\right) +Nd\left( \widetilde{u}_{|k},u\right) +\sup\nolimits_{g\in \Iso(%
\mathbb{U})}^{\ast }[A(gM)-\widetilde{N}d(\widetilde{u},g^{-1}\mathbf{p}_{|%
\widetilde{k}})]<t_{0}\text{.}
\end{equation*}%
Fix $m\in \mathbb{N}$ such that $t_{0}+\frac{6\widetilde{N}}{m}<t$. Let $%
\widetilde{u}\in \mathbb{U}^{\widetilde{k}}$ be such that%
\begin{equation*}
m\tau _{\widetilde{k}}(\widetilde{u},\mathbf{p}_{|\widetilde{k}})+Nd\left( 
\widetilde{u}_{|k},u\right) +\sup\nolimits_{g\in \Iso(\mathbb{U})}^{\ast
}[A(gM)-\widetilde{N}d(\widetilde{u},g^{-1}\mathbf{p}_{|\widetilde{k}%
})]<t_{0}\text{.}
\end{equation*}%
Since $\tau _{\widetilde{k}}(\widetilde{u},\mathbf{p}_{|\widetilde{k}})<1/m$%
, by Lemma \ref{Lemma: perturbation} there is $g_{0}\in \Iso(\mathbb{U})$
such that $d(\widetilde{v},\widetilde{u})<3/m$, where $\widetilde{v}%
=g_{0}^{-1}\mathbf{p}_{|\widetilde{k}}\in \mathcal{O}(\mathbf{p}_{|%
\widetilde{k}})$. Observe now that%
\begin{eqnarray*}
&&Nd\left( \widetilde{v}_{|k},u\right) +\sup\nolimits_{g\in \Iso\mathbb{(U}%
)}^{\ast }[A(gM)-\widetilde{N}d(\widetilde{v},g^{-1}\mathbf{p}_{|\widetilde{k%
}})] \\
&\leq &Nd\left( \widetilde{u}_{|k},u\right) +\sup\nolimits_{g\in \Iso(%
\mathbb{U})}^{\ast }[A(gM)-\widetilde{N}d(\widetilde{u},g^{-1}\mathbf{p}_{|%
\widetilde{k}})]+2\widetilde{N}d\left( \widetilde{v},\widetilde{u}\right) \\
&\leq &t_{0}+\frac{6\widetilde{N}}{m}<t\text{.}
\end{eqnarray*}

(2)$\Rightarrow $(1) By hypothesis there are $\widetilde{k}\geq k$, $%
\widetilde{N}\geq N$, and $\widetilde{u}\in \mathcal{O}(\mathbf{p}_{|%
\widetilde{k}})$ such that%
\begin{equation*}
Nd\left( \widetilde{u}_{|k},u\right) +\sup\nolimits_{g\in \Iso(\mathbb{U}%
)}^{\ast }[A(gM)-\widetilde{N}d(\widetilde{u},g^{-1}\mathbf{p}_{|\widetilde{k%
}})]<t\text{.}
\end{equation*}%
Define $s=Nd\left( \widetilde{u}_{|k},u\right) $ and 
\begin{equation*}
r=\sup\nolimits_{g\in \Iso(\mathbb{U})}^{\ast }[A(gM)-\widetilde{N}d(%
\widetilde{u},g^{-1}\mathbf{p}_{|\widetilde{k}})]\text{.}
\end{equation*}%
Fix $\delta >0$ such that $s+r+2\delta <t$. Observe that since $\widetilde{u}%
\in \mathcal{O}(\mathbf{p}_{\widetilde{k}})$ we have that $\left[ \widetilde{%
u}\right] _{<\delta \widetilde{N}^{-1}}\neq \varnothing $. Moreover $\left[ 
\widetilde{u}\right] _{<\delta \widetilde{N}^{-1}}\subset \left[ u\right]
_{<\left( s+\delta \right) N^{-1}}$. In fact suppose that $g\in \left[ 
\widetilde{u}\right] _{<\delta \widetilde{N}^{-1}}$ and hence $\widetilde{N}%
d(g^{-1}\mathbf{p}_{|\widetilde{k}},\widetilde{u})<\delta $. Thus we have%
\begin{eqnarray*}
Nd(g^{-1}\mathbf{p}_{|k},u) &\leq &Nd\left( \widetilde{u}_{|k},u\right) +Nd(%
\widetilde{u}_{|k},g^{-1}\mathbf{p}_{|k}) \\
&\leq &s+\widetilde{N}d(\widetilde{u},g^{-1}\mathbf{p}_{|\widetilde{k}}) \\
&\leq &s+\delta \text{.}
\end{eqnarray*}%
Moreover have that $\forall ^{\ast }g\in \left[ \widetilde{u}\right]
_{<\delta \widetilde{N}^{-1}}$,%
\begin{equation*}
A\left( gM\right) \leq r+\widetilde{N}d(\widetilde{u},g^{-1}\mathbf{p}_{|%
\widetilde{k}})<r+\delta \text{.}
\end{equation*}%
It follows that $\left[ u\right] _{<\left( s+\delta \right) N^{-1}}\neq
\varnothing $ and $\exists ^{\ast }g\in \left[ u\right] _{<\left( s+\delta
\right) N^{-1}}$ such that $A\left( gM\right) <r+\delta $. Therefore%
\begin{equation*}
\inf\nolimits_{g\in \Iso(\mathbb{U})}^{\ast }[A(gM)+Nd(u,g^{-1}\mathbf{p}%
_{|k})]\leq s+\delta +r+\delta <t\text{.}
\end{equation*}%
This concludes the proof.
\end{proof}

\begin{lemma}
\label{Lemma: inf}If $A$ is a grey subset of $\Mod(\mathcal{L},\mathbb{U})$,
then $A\in \mathcal{B}$ if and only if for every $k,N\in \omega $ with $%
N\geq 1$ there is an $N$-Lipschitz formula $\varphi $ such that, for every $%
M\in \Mod(\mathcal{L},\mathbb{U})$,%
\begin{equation}
\inf\nolimits_{g\in \Iso(\mathbb{U})}^{\ast }[A(gM)+Nd(g^{-1}\mathbf{p}%
_{|k},u)]=\varphi ^{M}(u)\text{.\label{Equation:inf}}
\end{equation}
\end{lemma}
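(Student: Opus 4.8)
\emph{Strategy.} The plan is to extract both directions from Lemma~\ref{Lemma: negation transform}, which already contains all of the geometric work (through Lemma~\ref{Lemma: perturbation} and the definability of $\mathcal{O}(\mathbf{p}_{|\widetilde{k}})$), together with the negation clause Proposition~\ref{Proposition: grey quantifiers}\eqref{Clause: negation grey quantifier}. Throughout I read $\tau_{\widetilde{k}}(\widetilde{x},\mathbf{p}_{|\widetilde{k}})$ as the quantifier-free formula in the $\widetilde{k}$ variables $\widetilde{x}$ noted after its definition, and $Nd(\widetilde{x}_{|k},\bar{y})$ as the quantifier-free formula $N\cdot\max_{i<k}d(\widetilde{x}_{i},y_{i})$, which is $N$-Lipschitz in $\bar{y}$. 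I also use the trivial reductions: ``$\exists\,\widetilde{x}\ (\rho(\widetilde{x})<t_{0})$'' is equivalent to ``$\inf_{\widetilde{x}}\rho<t_{0}$'', and ``$\exists\,t_{0}<t\ \forall m\ (h(m)<t_{0})$'' is equivalent to ``$\sup_{m}h(m)<t$''.

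\emph{The only-if direction.} Assume $A\in\mathcal{B}$ and fix $k,N$. For each $\widetilde{k}\geq k$ and $\widetilde{N}\geq N$ let $\psi_{\widetilde{k},\widetilde{N}}$ be the $\widetilde{N}$-Lipschitz formula with $\widetilde{k}$ free variables supplied by $A\in\mathcal{B}$, so that $\psi_{\widetilde{k},\widetilde{N}}^{M}(\widetilde{x})=\sup\nolimits_{g}^{\ast}[A(gM)-\widetilde{N}d(g^{-1}\mathbf{p}_{|\widetilde{k}},\widetilde{x})]$. Substituting $\psi_{\widetilde{k},\widetilde{N}}^{M}(\widetilde{x})$ for the supremum appearing in clause~(3) of Lemma~\ref{Lemma: negation transform} and applying the reductions above, the equivalence (1)$\Leftrightarrow$(3) of that lemma becomes, for every $t\in[0,1]$,
\[
\inf\nolimits_{g}^{\ast}[A(gM)+Nd(g^{-1}\mathbf{p}_{|k},u)]<t
\iff
\inf_{\widetilde{k}\geq k}\inf_{\widetilde{N}\geq N}\sup_{m\geq 1}\inf_{\widetilde{x}}\,\theta_{\widetilde{k},\widetilde{N},m}^{M}(\widetilde{x},u)<t,
\]
where $\theta_{\widetilde{k},\widetilde{N},m}(\widetilde{x},\bar{y}):=m\,\tau_{\widetilde{k}}(\widetilde{x},\mathbf{p}_{|\widetilde{k}})+Nd(\widetilde{x}_{|k},\bar{y})+\psi_{\widetilde{k},\widetilde{N}}(\widetilde{x})$. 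As this holds for all $t$, the two sides coincide as functions of $(M,u)$, so it only remains to verify that $\varphi(\bar{y}):=\inf_{\widetilde{k}\geq k}\inf_{\widetilde{N}\geq N}\sup_{m\geq 1}\inf_{\widetilde{x}}\theta_{\widetilde{k},\widetilde{N},m}(\widetilde{x},\bar{y})$ is a genuine $N$-Lipschitz $\mathcal{L}_{\omega_{1}\omega}$-formula with $k$ free variables; then $\varphi$ witnesses~\eqref{Equation:inf}.

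\emph{Legitimacy of $\varphi$, and the if direction.} Each $\theta_{\widetilde{k},\widetilde{N},m}$ is an $\mathcal{L}_{\omega_{1}\omega}$-formula with $\widetilde{k}+k$ free variables; the only subformula in which $\bar{y}$ occurs is $Nd(\widetilde{x}_{|k},\bar{y})$, and it is $N$-Lipschitz in $\bar{y}$. Hence once the $\widetilde{k}$ variables $\widetilde{x}$ have been bound by the (always admissible) iterated quantifier $\inf_{\widetilde{x}}$, the formula $\inf_{\widetilde{x}}\theta_{\widetilde{k},\widetilde{N},m}$ has free variables $\bar{y}$ only and is $N$-Lipschitz \emph{uniformly in $m$, $\widetilde{k}$ and $\widetilde{N}$}. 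Thus each of the three countable infima/suprema defining $\varphi$ is applied to a family of formulas with uniformly bounded (by $Nt$) moduli of continuity, so $\varphi$ is indeed a legitimate $N$-Lipschitz $\mathcal{L}_{\omega_{1}\omega}$-formula with $k$ free variables. For the converse, assume~\eqref{Equation:inf} holds for all $k,N$ and fix $k,N$. Apply Lemma~\ref{Lemma: negation transform} to the grey subset $1-A$, and translate each occurrence using Proposition~\ref{Proposition: grey quantifiers}\eqref{Clause: negation grey quantifier} and the truncated identities $(1-a)+b=1-(a-b)$, $(1-a)-b=1-(a+b)$ for $a,b\in[0,1]$: the left side of clause~(1) becomes $1-\sup\nolimits_{g}^{\ast}[A(gM)-Nd(g^{-1}\mathbf{p}_{|k},u)]$, while the supremum in clause~(3) becomes $1-\inf\nolimits_{g}^{\ast}[A(gM)+\widetilde{N}d(g^{-1}\mathbf{p}_{|\widetilde{k}},\widetilde{x})]=1-\varphi_{\widetilde{k},\widetilde{N}}^{M}(\widetilde{x})$, where $\varphi_{\widetilde{k},\widetilde{N}}$ is the formula given by~\eqref{Equation:inf} for the parameters $\widetilde{k},\widetilde{N}$. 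Repeating the same manipulation yields $\sup\nolimits_{g}^{\ast}[A(gM)-Nd(g^{-1}\mathbf{p}_{|k},u)]=1-\Psi^{M}(u)$, where $\Psi$ is defined exactly as $\varphi$ above but with $\psi_{\widetilde{k},\widetilde{N}}$ replaced by $1-\varphi_{\widetilde{k},\widetilde{N}}$; by the same computation $\Psi$, and therefore $\phi:=1-\Psi$, is a legitimate $N$-Lipschitz $\mathcal{L}_{\omega_{1}\omega}$-formula with $k$ free variables, and $\phi$ witnesses $A\in\mathcal{B}$.

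\emph{Main obstacle.} All the real content is packaged in Lemma~\ref{Lemma: negation transform}, so the only delicate point in the present argument is that $\varphi$ and $\phi$ are in fact $\mathcal{L}_{\omega_{1}\omega}$-formulas. The apparent difficulty is that the penalty coefficients $m$ blow up the moduli of continuity of the $\theta_{\widetilde{k},\widetilde{N},m}$, which would forbid the countable connective $\sup_{m}$; the resolution, which has to be made explicit, is that $m$ (and likewise the unbounded Lipschitz constant of $\psi_{\widetilde{k},\widetilde{N}}$) affects only subformulas in the bound variables $\widetilde{x}$, so that after the quantifier $\inf_{\widetilde{x}}$ the modulus in the genuinely free variables $\bar{y}$ is the harmless one, $Nt$. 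A secondary bookkeeping hazard is keeping straight the two distinct negation moves — one through Proposition~\ref{Proposition: grey quantifiers}\eqref{Clause: negation grey quantifier}, the other through the passage from $A$ to $1-A$ — together with the associated truncated-arithmetic identities.
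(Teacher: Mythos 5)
Your proof is correct and takes essentially the same route as the paper's: the forward direction constructs the same formula $\inf_{\widetilde{N}\geq N}\inf_{\widetilde{k}\geq k}\sup_{m\geq 1}\inf_{\bar{y}}\bigl[m\tau_{\widetilde{k}}(\bar{y},\mathbf{p}_{|\widetilde{k}})+Nd(\bar{y}_{|k},\bar{x})+\psi_{\widetilde{k},\widetilde{N}}(\bar{y})\bigr]$ from Lemma~\ref{Lemma: negation transform}, and your converse simply unfolds the paper's two substitutions $x\mapsto 1-x$ (first showing $1-A\in\mathcal{B}$, then reapplying the forward direction to $1-A$). Your explicit justification that the blow-up of the Lipschitz constants affects only the bound variables, so the countable connectives remain legitimate and the result is genuinely $N$-Lipschitz, is precisely the point the paper leaves implicit.
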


\begin{proof}
Suppose that $A\in \mathcal{B}$. We have that for every $\widetilde{k},%
\widetilde{N}\in \omega $ such that $\widetilde{N}\geq 1$ there is a formula 
$\psi _{\widetilde{k},\widetilde{N}}$ in $\widetilde{k}$ free variables such
that%
\begin{equation*}
\sup\nolimits_{g\in \Iso(\mathbb{U})}^{\ast }[A(gM)-\widetilde{N}d(u,g^{-1}%
\mathbf{p}_{|\widetilde{k}})]=\psi _{\widetilde{k},\widetilde{N}}^{M}(u)%
\text{.}
\end{equation*}%
Fix $k,N\in \omega $ with $N\geq 1$. Observe that for every $\widetilde{N}%
,m,k\in \omega $%
\begin{equation*}
\inf_{y_{0},\ldots ,y_{\widetilde{k}-1}}[m\tau _{\widetilde{k}}(\bar{y},%
\mathbf{p}_{|\widetilde{k}})+Nd(\bar{y}_{|k},\bar{x})+\psi _{\widetilde{k},%
\widetilde{N}}\left( \bar{y}\right) ]
\end{equation*}%
is an $N$-Lipschitz formula in the $k$ free variables $\bar{x}$. Therefore 
\begin{equation*}
\inf_{\widetilde{N}\geq N}\inf_{\widetilde{k}\geq k}\sup_{m\geq
1}\inf_{y_{0},\ldots ,y_{\widetilde{k}-1}}[m\tau _{\widetilde{k}}(\bar{y},%
\mathbf{p}_{|\widetilde{k}})+Nd(\bar{y}_{|k},\bar{x})+\psi _{\widetilde{k},%
\widetilde{N}}\left( \bar{y}\right) ]
\end{equation*}%
is also an $N$-Lipschitz formula $\varphi \left( \bar{x}\right) $ in the $k$
free variables $\bar{x}$. Moreover it follows from Lemma \ref{Lemma:
negation transform} that Equation \eqref{Equation:inf} holds. Conversely
suppose that for every $k,N\in \omega $ with $N\geq 1$ there exists and $N$%
-Lipschitz formula $\varphi $ such that Equation \eqref{Equation:inf} holds.
Performing the substitution $x\mapsto 1-x$ in each side of Equation %
\eqref{Equation:inf} shows that $1-A\in \mathcal{B}$. By the proof above
applied to $1-A$ we conclude that for every $k,N\in \omega $ with $N\geq 1$
there is an $N$-Lipschitz formula $\psi $ such that 
\begin{equation*}
\inf\nolimits_{g\in \Iso(\mathbb{U})}^{\ast }[(1-A)(gM)+Nd(g^{-1}\mathbf{p}%
_{|k},u)]=\psi ^{M}(u)\text{.}
\end{equation*}%
Performing again the substitution $x\mapsto 1-x$ gives%
\begin{equation*}
\sup\nolimits_{g\in \Iso(\mathbb{U})}^{\ast }[A\left( gM\right) -Nd(g^{-1}%
\mathbf{p}_{|k},u)]=\left( 1-\psi \right) ^{M}\left( u\right) \text{.}
\end{equation*}
Therefore the fomula $1-\psi $ witnesses the fact that $A\in \mathcal{B}$.
\end{proof}

With a similar argument as in the proof of Lemma \ref{Lemma: inf} one can
prove the following lemma.

\begin{lemma}
\label{Lemma: negation in B}Suppose that $A$ is a grey subset of $\Mod(%
\mathcal{L},\mathbb{U})$ and $q\in \left[ 0,1\right] $. Then $A\in \mathcal{B%
}$ if and only if $q-A\in \mathcal{B}$.
\end{lemma}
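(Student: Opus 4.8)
The goal is to show $A\in\mathcal B$ if and only if $q-A\in\mathcal B$, mirroring the equivalence already established in Lemma~\ref{Lemma: inf} between membership in $\mathcal B$ (expressed via the $\sup^*$-transform of $A$) and the corresponding statement for the $\inf^*$-transform. The plan is to run the same circle of substitutions, but now with $1-x$ replaced by $q-x$ on the value axis. Concretely, I would start from the assumption $A\in\mathcal B$, so for all $\widetilde k,\widetilde N$ there are $N$-Lipschitz formulas $\psi_{\widetilde k,\widetilde N}$ with $\psi_{\widetilde k,\widetilde N}^M(u)=\sup^*_{g}[A(gM)-\widetilde N d(u,g^{-1}\mathbf p_{|\widetilde k})]$, and then assemble the formula
\begin{equation*}
\inf_{\widetilde N\ge N}\inf_{\widetilde k\ge k}\sup_{m\ge 1}\inf_{\bar y}\,[\,m\tau_{\widetilde k}(\bar y,\mathbf p_{|\widetilde k})+Nd(\bar y_{|k},\bar x)+ (q-\psi_{\widetilde k,\widetilde N}(\bar y))\,]\text{,}
\end{equation*}
which is again $N$-Lipschitz. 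By Lemma~\ref{Lemma: negation transform} applied to the grey set $q-A$ (noting $q-(A(gM))$ is exactly what appears after the substitution, since truncated subtraction satisfies $q-(a-b)\ge (q-a)+b$ with equality on the relevant range, and the $\inf$/$\sup$ over $g$ swap under $x\mapsto q-x$), this formula computes $\inf^*_{g}[(q-A)(gM)+Nd(g^{-1}\mathbf p_{|k},u)]$, so $q-A$ satisfies the $\inf^*$-characterization and hence lies in $\mathcal B$ by Lemma~\ref{Lemma: inf}.

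The converse direction is symmetric: if $q-A\in\mathcal B$, apply the forward implication with $q-A$ in place of $A$ to get $q-(q-A)\in\mathcal B$; since $q-(q-A)=A$ whenever $A$ is genuinely $[0,1]$-valued and... well, here one must be slightly careful because truncated double-complementation $q-(q-a)$ is not literally $a$ unless $a\le q$. So rather than iterate, I would instead run the argument of Lemma~\ref{Lemma: inf} directly: from $q-A\in\mathcal B$ one obtains $\inf^*$-formulas for $q-A$, perform the substitution $\bar x\mapsto$ nothing but rather replace $\psi$ by $q-\psi$ inside, and deduce a $\sup^*$-formula for $A$. The bookkeeping is identical to the last paragraph of the proof of Lemma~\ref{Lemma: inf} with $1$ systematically replaced by $q$.

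The main obstacle, and the only place that genuinely needs care, is the interaction between the value-axis reflection $x\mapsto q-x$ and the \emph{truncated} arithmetic that is in force throughout the paper. One must check that the category quantifiers behave correctly: by Proposition~\ref{Proposition: grey quantifiers}\eqref{Clause: negation grey quantifier} we have $\inf^*_{y\in U}(q-B)=q-\sup^*_{y\in U}B$, which is exactly the identity that lets the $\sup^*$ and $\inf^*$ transforms trade places; but the composite expression $q-(A(gM)-\widetilde N d(\cdots))$ involves two nested truncated differences, and one should verify that on the region where the outer $\inf$ over $g$ is actually attained (or approximately attained), the truncations do not bite, so that the algebraic manipulations used to invoke Lemma~\ref{Lemma: negation transform} are valid. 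This is routine — indeed it is precisely the kind of truncation check already implicit in the proof of Lemma~\ref{Lemma: inf}, where the special case $q=1$ is handled — so I would simply remark that "with a similar argument as in the proof of Lemma~\ref{Lemma: inf}" the equivalence follows, and leave the reader to supply the now-familiar substitution bookkeeping. No new geometric input (no further use of Lemma~\ref{Lemma: perturbation}) is needed beyond what Lemma~\ref{Lemma: negation transform} already packages.
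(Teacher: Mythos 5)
Your overall plan---rerun the substitution circle from the proof of Lemma~\ref{Lemma: inf} with $1-x$ replaced by $q-x$---is exactly what the paper's one-sentence proof intends, but your execution of the forward direction breaks on the truncated arithmetic. The identity you invoke, that $q-(a-b)$ agrees with $(q-a)+b$ ``on the relevant range'', is not available: in truncated arithmetic one only has $q-(a-b)\le (q-a)+b$, and equality fails as soon as $a>q$ or $b>a$ (take $a=A(gM)$, $b=\widetilde N d(\widetilde u,g^{-1}\mathbf p_{|\widetilde k})$), and such $g$ need not be negligible for the category quantifiers. Consequently $q-\psi_{\widetilde k,\widetilde N}$, which by Proposition~\ref{Proposition: grey quantifiers}(1) computes $\inf^{\ast}_{g}[\,q-(A(gM)-\widetilde N d(\widetilde u,g^{-1}\mathbf p_{|\widetilde k}))\,]$, is in general neither the $\sup^{\ast}$- nor the $\inf^{\ast}$-transform of $q-A$; and even if it were the latter, Lemma~\ref{Lemma: negation transform} applied to $q-A$ requires in its inner slot formulas for $\sup^{\ast}_{g}[(q-A)(gM)-\widetilde N d(\widetilde u,g^{-1}\mathbf p_{|\widetilde k})]$, so the citation does not justify that your displayed formula computes $\inf^{\ast}_{g}[(q-A)(gM)+Nd(g^{-1}\mathbf p_{|k},u)]$. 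The identity that does survive truncation is $q-(a+b)=(q-a)-b$, and with it the argument is much shorter than your construction: by Lemma~\ref{Lemma: inf} take, for each $k,N$, an $N$-Lipschitz $\varphi_{k,N}$ with $\varphi_{k,N}^{M}(u)=\inf^{\ast}_{g}[A(gM)+Nd(g^{-1}\mathbf p_{|k},u)]$; then $q-\varphi_{k,N}$ is $N$-Lipschitz and $(q-\varphi_{k,N})^{M}(u)=\sup^{\ast}_{g}[(q-A)(gM)-Nd(g^{-1}\mathbf p_{|k},u)]$, which is precisely the witness required for $q-A\in\mathcal B$. No re-running of the $\inf_{\widetilde N}\inf_{\widetilde k}\sup_{m}\inf_{\bar y}$ template is needed; if you do re-run it, the inner formulas must be the $q-\varphi_{\widetilde k,\widetilde N}$, not the $q-\psi_{\widetilde k,\widetilde N}$.

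Your worry about the converse is well placed, but the repair you sketch does not resolve it: replacing $1$ by $q$ in the last paragraph of the proof of Lemma~\ref{Lemma: inf} still needs $q-(q-A)=A$, and from data attached to $q-A$ one can at best recover $\min(A,q)$ (for $q=0$ the grey set $q-A$ is identically $0$ and retains no information about $A$), so no substitution bookkeeping starting from formulas for $q-A$ can produce the $\sup^{\ast}$-formulas for $A$. What this method honestly yields is the implication $A\in\mathcal B\Rightarrow q-A\in\mathcal B$ for every $q\in[0,1]$, together with the genuine equivalence in the case $q=1$, where $1-(1-A)=A$; note that this is all that is ever used later, since hypothesis (1) of Lemma~\ref{Lemma: class Borel grey sets} needs only the forward implication and the appeal to the present lemma in Lemma~\ref{Lemma: sup and inf in B} uses only the case $q=1$.
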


\subsection{The base case\label{Subsection: base case}}

The proofs of Lemma \ref{Lemma: base case computation 1} and Lemma \ref%
{Lemma: base computation 2} are analogous to the proof of Lemma \ref{Lemma:
negation transform}, and are omitted. The key point is that one can perturb
an element of $\mathbb{U}^{k}$ for which $\tau _{k}(w,\mathbf{p}_{|k})$ is
small to an element in the orbit $\mathcal{O}(\mathbf{p}_{|k})$ of $\mathbf{p%
}_{|k}$ using Lemma \ref{Lemma: perturbation}.

\begin{lemma}
\label{Lemma: base case computation 1}Suppose that $n,k\in \omega $, $\phi $
is a quantifier-free $\mathcal{L}_{\omega \omega }$-formula with $n$ free
variables, and $u\in \mathbb{U}^{k}$. If $n<k$ and $t\in \left[ 0,1\right] $%
, then following statements are equivalent:

\begin{enumerate}
\item $\inf\nolimits_{g\in \Iso(\mathbb{U})}^{\ast }[\phi ^{M}(g^{-1}\mathbf{%
p}_{|n})+Nd(g^{-1}\mathbf{p}_{|k},u)]<t$;

\item There is $w\in \mathcal{O}(\mathbf{p}_{|k})$ such that $\phi
^{M}(w_{|n})+Nd\left( w,u\right) <t$;

\item There is $t_{0}<t$ such that for every $m\geq 1$, $\exists w\in 
\mathbb{U}^{k}$ such that $\phi ^{M}(w_{|n})+Nd\left( w,u\right) +m\tau
_{k}(w,\mathbf{p}_{|k})<t_{0}$.
\end{enumerate}
\end{lemma}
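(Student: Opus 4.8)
The plan is to follow the template of the proof of Lemma~\ref{Lemma: negation transform}, exploiting the simplification that, since $\phi$ is quantifier-free, the function $g\mapsto\phi^{M}(g^{-1}\mathbf{p}_{|n})$ is continuous on $\Iso(\mathbb{U})$ (it is the composition of the continuous map $g\mapsto g^{-1}\mathbf{p}_{|n}$ with the continuous evaluation $\phi^{M}\colon\mathbb{U}^{n}\to[0,1]$). Throughout, $M\in\Mod(\mathcal{L},\mathbb{U})$ and $N\in\mathbb{N}$ with $N\geq1$ are the fixed parameters of the statement. I will prove $(1)\Rightarrow(2)$, $(2)\Rightarrow(3)$, $(3)\Rightarrow(2)$ and $(2)\Rightarrow(1)$, of which only $(3)\Rightarrow(2)$ requires any real work.

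For $(1)\Leftrightarrow(2)$: by the definition of the category quantifier, $(1)$ asserts exactly that $\exists^{\ast}g\in\Iso(\mathbb{U})$ with $\phi^{M}(g^{-1}\mathbf{p}_{|n})+Nd(g^{-1}\mathbf{p}_{|k},u)<t$. The displayed quantity is a continuous function of $g$, so the set of such $g$ is open, and an open set is nonmeager iff it is nonempty; hence $(1)$ is equivalent to the existence of a single $g_{0}$ with $\phi^{M}(g_{0}^{-1}\mathbf{p}_{|n})+Nd(g_{0}^{-1}\mathbf{p}_{|k},u)<t$. Since $n<k$ we have $(g_{0}^{-1}\mathbf{p}_{|k})_{|n}=g_{0}^{-1}\mathbf{p}_{|n}$, so $w:=g_{0}^{-1}\mathbf{p}_{|k}\in\mathcal{O}(\mathbf{p}_{|k})$ witnesses $(2)$; conversely every $w\in\mathcal{O}(\mathbf{p}_{|k})$ has this form for some $g_{0}$, and running the same continuity argument backwards recovers $(1)$.

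For $(2)\Rightarrow(3)$: if $w\in\mathcal{O}(\mathbf{p}_{|k})$ then $w$ and $\mathbf{p}_{|k}$ are isometric tuples, hence have the same distance matrix, so $\tau_{k}(w,\mathbf{p}_{|k})=0$ and $m\tau_{k}(w,\mathbf{p}_{|k})=0$ for every $m$; choosing $t_{0}$ with $\phi^{M}(w_{|n})+Nd(w,u)<t_{0}<t$, the same $w$ witnesses $(3)$ for all $m$. For $(3)\Rightarrow(2)$, fix the $t_{0}<t$ given by $(3)$ and, using the modulus of continuity $\varpi_{\phi}$ of $\phi$, choose $m\geq1$ large enough that $t_{0}+\varpi_{\phi}(3/m)+3N/m<t$. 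Applying $(3)$ with this $m$ gives $w\in\mathbb{U}^{k}$ with $\phi^{M}(w_{|n})+Nd(w,u)+m\tau_{k}(w,\mathbf{p}_{|k})<t_{0}\leq1$, whence $\tau_{k}(w,\mathbf{p}_{|k})<1/m$. Lemma~\ref{Lemma: perturbation} (with $\varepsilon=1/m$, applied to $w$ and $\mathbf{p}_{|k}$) then yields $g\in\Iso(\mathbb{U})$ with $d(w,g\mathbf{p}_{|k})<3/m$; set $v:=g\mathbf{p}_{|k}\in\mathcal{O}(\mathbf{p}_{|k})$. Since $d(v_{|n},w_{|n})\leq d(v,w)<3/m$, the triangle inequality together with the modulus of continuity gives
\[
\phi^{M}(v_{|n})+Nd(v,u)\;\leq\;\phi^{M}(w_{|n})+Nd(w,u)+\varpi_{\phi}(3/m)+3N/m\;<\;t_{0}+\varpi_{\phi}(3/m)+3N/m\;<\;t,
\]
so $v$ witnesses $(2)$.

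The only nonroutine point is the estimate in $(3)\Rightarrow(2)$: one must fix $t_{0}$ first and only then choose $m$, so as to simultaneously absorb the change in the $\phi$-term (controlled by $\varpi_{\phi}$, which need not be Lipschitz but still vanishes at $0$) and the change in the term $Nd(\,\cdot\,,u)$ (controlled linearly by $N$) incurred by moving from $w$ to a nearby point of the orbit $\mathcal{O}(\mathbf{p}_{|k})$. Everything else reduces to continuity of quantifier-free formulas, the definition of $\inf^{\ast}$, and the identification $\mathcal{O}(\mathbf{p}_{|k})=\{\,w\in\mathbb{U}^{k}\mid\tau_{k}(w,\mathbf{p}_{|k})=0\,\}$, which is immediate from the ultrahomogeneity of $\mathbb{U}$.
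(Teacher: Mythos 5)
Your proof is correct and takes essentially the approach the paper intends: the paper omits this argument as being analogous to Lemma~\ref{Lemma: negation transform}, its stated key point being exactly your $(3)\Rightarrow(2)$ step, namely perturbing a tuple $w$ with small $\tau_{k}(w,\mathbf{p}_{|k})$ into the orbit $\mathcal{O}(\mathbf{p}_{|k})$ via Lemma~\ref{Lemma: perturbation}. Your observation that continuity of $g\mapsto\phi^{M}(g^{-1}\mathbf{p}_{|n})+Nd(g^{-1}\mathbf{p}_{|k},u)$ collapses the category quantifier to plain existence in $(1)\Leftrightarrow(2)$ is precisely the simplification that makes this quantifier-free base case easier than the general transform lemma, so nothing is missing.
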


\begin{lemma}
\label{Lemma: base computation 2}Suppose that $n,k\in \omega $, $\phi $ is a
quantifier-free $\mathcal{L}_{\omega \omega }$-formula with $n$ free
variables, and $u\in \mathbb{U}^{k}$. If $k\leq n$ and $t\in \left[ 0,1%
\right] $, then the following statements are equivalent:

\begin{enumerate}
\item $\inf\nolimits_{g\in \Iso(\mathbb{U})}^{\ast }[\phi ^{M}(g^{-1}\mathbf{%
p}_{|n})+Nd(g^{-1}\mathbf{p}_{|k},u)]<t$;

\item There is $w\in \mathcal{O}(\mathbf{p}_{|n})$ such that $\phi
^{M}(w)+Nd\left( w_{|k},u\right) <t$;

\item There is $t_{0}<t$ such that for every $m\geq 1$ there is $w\in 
\mathbb{U}^{n}$ such that $\phi ^{M}(w)+Nd\left( w_{|k},u\right) +m\tau
_{n}(w,\mathbf{p}_{|n})<t_{0}$.
\end{enumerate}
\end{lemma}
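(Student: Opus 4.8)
The plan is to follow the template of the proof of Lemma~\ref{Lemma: negation transform}, establishing all three equivalences via the cycle $(1)\Rightarrow(2)\Rightarrow(3)\Rightarrow(2)\Rightarrow(1)$. The situation here is in fact simpler than in Lemma~\ref{Lemma: negation transform}, because $\phi$ is quantifier-free (and finitary): its interpretation $\phi^{M}$ is a genuine uniformly continuous function $\mathbb{U}^{n}\to[0,1]$, with a modulus of continuity $\varpi_{\phi}$ depending only on $\phi$ and the moduli of the symbols of $\mathcal{L}$, so the integrand $g\mapsto\phi^{M}(g^{-1}\mathbf{p}_{|n})$ appearing in clause~(1) is already continuous on $\Iso(\mathbb{U})$ and there is no nested category quantifier to manage. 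Throughout I would use that $g\mapsto g^{-1}\mathbf{p}_{|n}$ is continuous with image exactly $\mathcal{O}(\mathbf{p}_{|n})$, that $g^{-1}\mathbf{p}_{|k}=(g^{-1}\mathbf{p}_{|n})_{|k}$ since $k\le n$, and that for $w\in\mathbb{U}^{n}$ the set $[w]_{<\eta}=\{g:d(g^{-1}\mathbf{p}_{|n},w)<\eta\}$ is open, and nonempty precisely when $w\in\mathcal{O}(\mathbf{p}_{|n})$.

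The three easy directions go as follows. For $(1)\Rightarrow(2)$ I would unwind the definition of $\inf^{\ast}$ as in Lemma~\ref{Lemma: negation transform}: clause~(1) produces $r,s$ with $r+s<t$ and $\exists^{\ast}g$ with $\phi^{M}(g^{-1}\mathbf{p}_{|n})<r$ and $Nd(g^{-1}\mathbf{p}_{|k},u)<s$; both inequalities are open conditions on $g$, so their conjunction defines a nonempty open set, and any $g_{0}$ in it gives $w:=g_{0}^{-1}\mathbf{p}_{|n}\in\mathcal{O}(\mathbf{p}_{|n})$ with $\phi^{M}(w)+Nd(w_{|k},u)<r+s<t$. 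For $(2)\Rightarrow(3)$: a tuple $w\in\mathcal{O}(\mathbf{p}_{|n})$ has the same distance matrix as $\mathbf{p}_{|n}$, so $\tau_{n}(w,\mathbf{p}_{|n})=0$ and the $m\tau_{n}$ term vanishes identically; thus the same $w$, together with any $t_{0}$ with $\phi^{M}(w)+Nd(w_{|k},u)<t_{0}<t$, works for every $m$. For $(2)\Rightarrow(1)$: writing $w=g_{0}^{-1}\mathbf{p}_{|n}$, I would estimate on the nonempty open set $[w]_{<\eta}$, where the triangle inequality gives $Nd(g^{-1}\mathbf{p}_{|k},u)\le Nd(w_{|k},u)+N\eta$ and uniform continuity gives $\phi^{M}(g^{-1}\mathbf{p}_{|n})\le\phi^{M}(w)+\varpi_{\phi}(\eta)$; for $\eta$ small enough that $N\eta+\varpi_{\phi}(\eta)<t-(\phi^{M}(w)+Nd(w_{|k},u))$ the sum stays below $t$ throughout $[w]_{<\eta}$, which yields~(1).

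The step that takes actual work — as the paper signals — is $(3)\Rightarrow(2)$, and it is exactly where Lemma~\ref{Lemma: perturbation} comes in. I would take $w\in\mathbb{U}^{n}$ witnessing clause~(3) for a large $m$ to be chosen; then $\tau_{n}(w,\mathbf{p}_{|n})<1/m$, so by Lemma~\ref{Lemma: perturbation} (applied with $n$ in place of $k$ and $\varepsilon=1/m$) there is $g_{0}\in\Iso(\mathbb{U})$ with $d(w,g_{0}^{-1}\mathbf{p}_{|n})<3/m$; put $\widetilde{w}:=g_{0}^{-1}\mathbf{p}_{|n}\in\mathcal{O}(\mathbf{p}_{|n})$. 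Then
\begin{equation*}
\phi^{M}(\widetilde{w})+Nd(\widetilde{w}_{|k},u)\le\phi^{M}(w)+Nd(w_{|k},u)+\varpi_{\phi}(3/m)+3N/m<t_{0}+\varpi_{\phi}(3/m)+3N/m,
\end{equation*}
using uniform continuity of $\phi^{M}$ and the triangle inequality, and since $\varpi_{\phi}$ is continuous and vanishes at $0$ one picks $m$ with $\varpi_{\phi}(3/m)+3N/m<t-t_{0}$, making the left-hand side $<t$ and so establishing~(2). The hypothesis $k\le n$ (versus $k<n$ in Lemma~\ref{Lemma: base case computation 1}) enters only in the bookkeeping: here $\phi$ is evaluated at the full tuple $w\in\mathbb{U}^{n}$ while it is $w_{|k}$ that is compared to $u$, whereas in the other case $\phi$ sees $w_{|n}$ for $w\in\mathbb{U}^{k}$; the arguments are otherwise word-for-word identical, which is why the two lemmas can be treated together. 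The main obstacle is nothing conceptual but the arithmetic of matching the perturbation constant $3/m$ and the modulus $\varpi_{\phi}$ against the available slack $t-t_{0}$, exactly as in the proof of Lemma~\ref{Lemma: negation transform}.
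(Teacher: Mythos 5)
Your proposal is correct and follows exactly the route the paper indicates for this lemma: the paper omits the proof, stating it is analogous to Lemma~\ref{Lemma: negation transform} with the key point being the perturbation of a tuple $w$ with small $\tau_{n}(w,\mathbf{p}_{|n})$ into the orbit $\mathcal{O}(\mathbf{p}_{|n})$ via Lemma~\ref{Lemma: perturbation}, and your cycle $(1)\Rightarrow(2)\Rightarrow(3)\Rightarrow(2)\Rightarrow(1)$, with the modulus $\varpi_{\phi}$ and the $3N/m$ bound absorbing the slack $t-t_{0}$, is precisely that argument (correctly simplified by the continuity of $g\mapsto\phi^{M}(g^{-1}\mathbf{p}_{|n})$, which removes the inner category quantifier).
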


\begin{lemma}
\label{Lemma: base case}If $\phi $ is an $\mathcal{L}_{\omega \omega }$
quantifier-free formula with $n$ free variables, then the grey subset $%
M\mapsto \phi ^{M}(g^{-1}\mathbf{p}_{|n})$ of $\Mod(\mathcal{L},\mathbb{U})$
belongs to $\mathcal{B}$.
\end{lemma}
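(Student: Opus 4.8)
The plan is to deduce this exactly as Lemma~\ref{Lemma: inf} is applied. Write $A=E_{\phi,\mathbf{p}_{|n}}$, that is $A(M)=\phi^M(\mathbf{p}_{|n})$, so that $A(gM)=\phi^M(g^{-1}\mathbf{p}_{|n})$ for $g\in\Iso(\mathbb{U})$. Since the connectives are continuous, $A$ is a continuous, hence Borel, grey subset of $\Mod(\mathcal{L},\mathbb{U})$, so by Lemma~\ref{Lemma: inf} it suffices to produce, for each $k\in\omega$ and each $N\geq 1$, an $N$-Lipschitz $\mathcal{L}_{\omega_1\omega}$-formula $\varphi$ with $k$ free variables such that for every $M$ and every $u\in\mathbb{U}^k$,
\[
\inf\nolimits_{g\in\Iso(\mathbb{U})}^{\ast}\bigl[\phi^M(g^{-1}\mathbf{p}_{|n})+Nd(g^{-1}\mathbf{p}_{|k},u)\bigr]=\varphi^M(u).
\]

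I would then split according to whether $n<k$ or $k\leq n$, which are covered by Lemma~\ref{Lemma: base case computation 1} and Lemma~\ref{Lemma: base computation 2} respectively. When $n<k$ I would take
\[
\varphi(\bar x)=\sup_{m\geq 1}\ \inf_{y_0,\dots,y_{k-1}}\Bigl[\phi(y_0,\dots,y_{n-1})+Nd\bigl((y_0,\dots,y_{k-1}),\bar x\bigr)+m\,\tau_k\bigl((y_0,\dots,y_{k-1}),\mathbf{p}_{|k}\bigr)\Bigr],
\]
and when $k\leq n$ I would take
\[
\varphi(\bar x)=\sup_{m\geq 1}\ \inf_{y_0,\dots,y_{n-1}}\Bigl[\phi(y_0,\dots,y_{n-1})+Nd\bigl((y_0,\dots,y_{k-1}),\bar x\bigr)+m\,\tau_n\bigl((y_0,\dots,y_{n-1}),\mathbf{p}_{|n}\bigr)\Bigr].
\]
In either case the bracketed expression is a finitary quantifier-free formula in which neither $\phi(\dots)$ nor $\tau$ involves $\bar x$, while $Nd(\cdot,\bar x)$ is $N$-Lipschitz in $\bar x$; hence each bracketed formula has modulus of continuity in $\bar x$ bounded by $t\mapsto Nt$, so the finite block of infima followed by the countable supremum over $m$ yields a well-formed $\mathcal{L}_{\omega_1\omega}$-formula (the moduli of the disjuncts are uniformly bounded by $t\mapsto Nt$) that is again $N$-Lipschitz.

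Finally I would verify the displayed identity by comparing level sets for each rational, hence each real, $t\in[0,1]$. Using the elementary equivalences $\exists\,w\,(f(w)<t_0)\iff\inf_w f(w)<t_0$ and $\bigl((\exists\,t_0<t)(\forall\,m)\,(h_m<t_0)\bigr)\iff\sup_m h_m<t$, clause (3) of Lemma~\ref{Lemma: base case computation 1} (resp. Lemma~\ref{Lemma: base computation 2}) translates precisely into $\varphi^M(u)<t$; together with the equivalence of clauses (1) and (3) of that lemma this gives $\inf\nolimits_{g\in\Iso(\mathbb{U})}^{\ast}[\phi^M(g^{-1}\mathbf{p}_{|n})+Nd(g^{-1}\mathbf{p}_{|k},u)]<t\iff\varphi^M(u)<t$ for all $t$, hence equality. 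Applying Lemma~\ref{Lemma: inf} then gives $E_{\phi,\mathbf{p}_{|n}}\in\mathcal{B}$. I do not expect a genuine obstacle here, since the perturbation content is already isolated in Lemmas~\ref{Lemma: base case computation 1} and~\ref{Lemma: base computation 2}; the only points needing care are (i) confirming that the displayed $\sup_m\inf_{\bar y}$-formula is genuinely $N$-Lipschitz and is a legitimate infinitary formula, and (ii) the bookkeeping that turns the nested $\exists t_0\,\forall m\,\exists w$ condition into the value of $\varphi$, including the degenerate case $k=0$ where $\bar x$ is empty and $\varphi$ is a sentence.
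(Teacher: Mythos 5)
Your proposal is correct and follows essentially the same route as the paper's proof: reduce to Lemma~\ref{Lemma: inf}, split into the cases $n<k$ and $k\leq n$, and define exactly the same $N$-Lipschitz $\sup_{m}\inf_{\bar y}$ formulas, whose agreement with the category-quantifier expression is precisely the content of Lemmas~\ref{Lemma: base case computation 1} and~\ref{Lemma: base computation 2}. The extra details you supply (the Lipschitz/well-formedness check and the level-set bookkeeping) are left implicit in the paper but are the intended justification.
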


\begin{proof}
By Lemma \ref{Lemma: inf} it is enough to show that for every $N,k\in \omega 
$ with $N\geq 1$ there is an $N$-Lipschitz $\mathcal{L}_{\omega _{1}\omega }$
formula $\psi $ with $k$ free variables such that%
\begin{equation*}
\inf\nolimits_{g\in \Iso(\mathbb{U})}^{\ast }[\phi ^{M}(g^{-1}\mathbf{p}%
_{|n})+Nd(g^{-1}\mathbf{p}_{|k},u)]=\psi ^{M}(u)
\end{equation*}%
for every $M\in \Mod(\mathcal{L},\mathbb{U})$ and $u\in \mathbb{U}^{k}$. Let
us distinguish the cases when $n<k$ and $n\geq k$. If $n<k$ define the $N$%
-Lipschitz formula $\psi \left( \bar{x}\right) $ in the $k$ free variables $%
\bar{x}$ by%
\begin{equation*}
\sup_{m\geq 1}\inf_{y_{0},\ldots ,y_{k-1}}[m\tau _{k}(\bar{y},\mathbf{p}%
_{|k})+Nd\left( \bar{y},\bar{x}\right) +\phi (\bar{y}_{|n})]\text{.}
\end{equation*}%
It follows from Lemma \ref{Lemma: base case computation 1} that for every $%
M\in \Mod(\mathcal{L},\mathbb{U})$ and $u\in \mathbb{U}^{k}$%
\begin{equation*}
\inf\nolimits_{g\in \Iso(\mathbb{U})}^{\ast }[\phi ^{M}(g^{-1}\mathbf{p}%
_{|n})+Nd(g^{-1}\mathbf{p}_{|k},u)]=\psi ^{M}(u)\text{.}
\end{equation*}%
If $n\geq k$ then define $\psi \left( \bar{x}\right) $ to be the $N$%
-Lipschitz formula in the $k$ free variables $\bar{x}$%
\begin{equation*}
\sup_{m\geq 1}\inf_{y_{0},\ldots ,y_{n-1}}[m\tau _{n}(\bar{y},\mathbf{p}%
_{|n})+Nd(\bar{x},\bar{y}_{|k})+\phi \left( \bar{y}\right) ]\text{.}
\end{equation*}%
It follows from Lemma \ref{Lemma: base computation 2} that for every $M\in %
\Mod(\mathcal{L},\mathbb{U})$ and $u\in \mathbb{U}^{k}$%
\begin{equation*}
\inf\nolimits_{g\in \Iso(\mathbb{U})}^{\ast }[\phi ^{M}(g^{-1}\mathbf{p}%
_{|n})+Nd(g^{-1}\mathbf{p}_{|k},u)]=\psi ^{M}(u)\text{,}
\end{equation*}%
which concludes the proof.
\end{proof}

\subsection{Linear combinations\label{Subsection: linear combinations}}

The proof of Lemma \ref{Lemma: transform linear combination} is analogous to
the proofs of Lemmas~\ref{Lemma: negation transform}, \ref{Lemma: base case
computation 1}, and \ref{Lemma: base computation 2}, and it is again omitted.

\begin{lemma}
\label{Lemma: transform linear combination}Suppose that $A,B$ are grey
subsets of $\Mod(\mathcal{L},\mathbb{U})$, $k,N\in \omega $ with $N\geq 1$,
and $\lambda ,\mu \in \left[ 0,1\right] $. For any $t\in \left[ 0,1\right] $%
, $u\in \mathbb{U}^{k}$, and $M\in \Mod(\mathcal{L},\mathbb{U})$, the
following statements are equivalent:

\begin{enumerate}
\item $\inf_{g\in \Iso(\mathbb{U})}^{\ast }[\left( \lambda A+\mu B\right)
(gM)+Nd(g^{-1}\mathbf{p}_{|k},u)]<t$;

\item There are $\widetilde{k}\geq k$ and $\widetilde{N}\geq N$ and $%
\widetilde{u}\in \mathcal{O}(\mathbf{p}_{|\widetilde{k}})$ such that 
\begin{multline*}
Nd\left( \widetilde{u}_{|k},u\right) +\lambda \sup\nolimits_{g\in \Iso(%
\mathbb{U})}^{\ast }[A(gM)-\widetilde{N}d(\widetilde{u},g^{-1}\mathbf{p}_{|%
\widetilde{k}})] \\
+\mu \sup\nolimits_{g\in \Iso(\mathbb{U})}^{\ast }[B(gM)-\widetilde{N}d(%
\widetilde{u},g^{-1}\mathbf{p}_{|\widetilde{k}})]<t\text{;}
\end{multline*}

\item There are $t_{0}<t$, $\widetilde{k}\geq k$, $\widetilde{N}\geq 
\widetilde{N}$ such that for every $m\geq 1$ there is $\widetilde{u}\in 
\mathbb{U}^{\widetilde{k}}$ such that%
\begin{multline*}
Nd\left( \widetilde{u}_{|k},u\right) +\lambda \sup\nolimits_{g\in \Iso(%
\mathbb{U})}^{\ast }[A(gM)-\widetilde{N}d(\widetilde{u},g^{-1}\mathbf{p}_{|%
\widetilde{k}})] \\
+\mu \sup\nolimits_{g\in \Iso(\mathbb{U})}^{\ast }[B(gM)-\widetilde{N}d(%
\widetilde{u},g^{-1}\mathbf{p}_{|\widetilde{k}})]+m\tau _{\widetilde{k}}(%
\widetilde{u},\mathbf{p}_{|\widetilde{k}})<t_{0}\text{.}
\end{multline*}
\end{enumerate}
\end{lemma}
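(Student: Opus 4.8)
The plan is to mimic the cycle of implications already established for Lemma~\ref{Lemma: negation transform}, namely (1)$\Rightarrow$(2)$\Rightarrow$(3)$\Rightarrow$(2)$\Rightarrow$(1), with the single real novelty being the need to handle the two grey sets $A$ and $B$ simultaneously by means of a common witness $\widetilde{u}$. First I would unfold the definition of the category quantifier $\inf^{\ast}$: the statement in (1) means that there are reals $r,s\in[0,1]$ with $\lambda r+\mu s'+\,\dots$ — more precisely, there exist $r_A,r_B,s\in[0,1]$ with $\lambda r_A+\mu r_B+s<t$ and a nonempty open $U\subset[u]_{<sN^{-1}}$ such that $\forall^{\ast}g\in U$ we have $A(gM)<r_A$ \emph{and} $B(gM)<r_B$ (here one uses that the conjunction of two comeager conditions on $U$ is again comeager, together with clause~(2) of Proposition~\ref{Proposition: grey quantifiers} to localize to a basic open set). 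Picking $g_0\in U$ and then choosing $\widetilde{k}\geq k$, $\widetilde{N}\geq N$ large enough that the basic neighbourhood $[\widetilde{u}]_{<\widetilde{N}^{-1}}$ of $\widetilde{u}:=g_0^{-1}\mathbf{p}_{|\widetilde{k}}$ is contained in $U$, exactly as in Lemma~\ref{Lemma: negation transform}, one gets $\sup^{\ast}_{g}[A(gM)-\widetilde{N}d(\widetilde{u},g^{-1}\mathbf{p}_{|\widetilde{k}})]\leq r_A$ and likewise $\leq r_B$ for $B$, while $Nd(\widetilde{u}_{|k},u)<s$; adding these up gives (2).

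The implication (2)$\Rightarrow$(3) is immediate since $\widetilde{u}\in\mathcal{O}(\mathbf{p}_{|\widetilde{k}})$ forces $\tau_{\widetilde{k}}(\widetilde{u},\mathbf{p}_{|\widetilde{k}})=0$, so the extra term $m\tau_{\widetilde{k}}$ vanishes and one takes any $t_0$ strictly between the left-hand side and $t$. For (3)$\Rightarrow$(2) I would run the perturbation argument verbatim from Lemma~\ref{Lemma: negation transform}: fix $m$ with $t_0+6\widetilde{N}/m<t$, take the corresponding $\widetilde{u}\in\mathbb{U}^{\widetilde{k}}$ with $\tau_{\widetilde{k}}(\widetilde{u},\mathbf{p}_{|\widetilde{k}})<1/m$, apply Lemma~\ref{Lemma: perturbation} to obtain $g_0$ with $d(\widetilde{v},\widetilde{u})<3/m$ where $\widetilde{v}=g_0^{-1}\mathbf{p}_{|\widetilde{k}}\in\mathcal{O}(\mathbf{p}_{|\widetilde{k}})$, and then use $1$-Lipschitz-ness of $d$ in the $\widetilde{N}d(\widetilde{u},g^{-1}\mathbf{p}_{|\widetilde{k}})$ terms to see that replacing $\widetilde{u}$ by $\widetilde{v}$ increases the $A$-supremum by at most $2\widetilde{N}d(\widetilde{v},\widetilde{u})$ and similarly for the $B$-supremum and for $Nd(\widetilde{u}_{|k},u)$. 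The only bookkeeping difference from the negation lemma is that there are now three perturbation errors rather than two; since $\lambda,\mu\leq 1$ and the truncated arithmetic only helps, the combined error is still at most $6\widetilde{N}/m$, so the bound $t_0+6\widetilde{N}/m<t$ suffices.

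Finally (2)$\Rightarrow$(1): given $\widetilde{u}\in\mathcal{O}(\mathbf{p}_{|\widetilde{k}})$ realizing (2), set $s=Nd(\widetilde{u}_{|k},u)$, $r_A=\sup^{\ast}_{g}[A(gM)-\widetilde{N}d(\widetilde{u},g^{-1}\mathbf{p}_{|\widetilde{k}})]$, $r_B$ likewise, pick $\delta>0$ with $s+\lambda r_A+\mu r_B+3\delta<t$, note $[\widetilde{u}]_{<\delta\widetilde{N}^{-1}}\neq\varnothing$ because $\widetilde{u}$ is in the orbit, observe $[\widetilde{u}]_{<\delta\widetilde{N}^{-1}}\subset[u]_{<(s+\delta)N^{-1}}$ by the triangle inequality exactly as before, and conclude that $\forall^{\ast}g$ in this neighbourhood both $A(gM)<r_A+\delta$ and $B(gM)<r_B+\delta$, whence $(\lambda A+\mu B)(gM)<\lambda r_A+\mu r_B+\delta$; plugging into the definition of $\inf^{\ast}$ yields $(1)$. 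The main obstacle, such as it is, is purely organizational: one must carry the single witness $\widetilde{u}$ through both the $A$-part and the $B$-part of every estimate, and be careful that the category-quantifier manipulations (intersecting two comeager sets, passing to a basic open subset) are done once for the pair $(A,B)$ rather than separately; there is no genuinely new idea beyond what Lemma~\ref{Lemma: negation transform} already contains, which is presumably why the authors omit the details.
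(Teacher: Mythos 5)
Your proposal is correct and follows exactly the route the paper intends: the paper omits this proof as being analogous to Lemma~\ref{Lemma: negation transform}, and your argument is precisely that analogue, carrying a single witness $\widetilde{u}$ (and a common neighbourhood on which the two comeager conditions are intersected) through both the $A$- and $B$-estimates. The only quibble is bookkeeping: in (3)$\Rightarrow$(2) the perturbation error is $\bigl(N+(\lambda+\mu)\widetilde{N}\bigr)d(\widetilde{v},\widetilde{u})\leq 9\widetilde{N}/m$ rather than $6\widetilde{N}/m$ when $\lambda+\mu>1$, and in (2)$\Rightarrow$(1) the bound should read $\lambda r_A+\mu r_B+2\delta$ rather than $+\delta$, but both slips are harmless --- choose $m$ with $t_0+9\widetilde{N}/m<t$, and your $3\delta$ budget already absorbs the second.
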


\begin{lemma}
\label{Lemma: linear combinations in B}Suppose that $A,B$ are grey subsets
of $\Mod(\mathcal{L},\mathbb{U})$ that belong to $\mathcal{B}$. If $\lambda
,\mu \in \left[ 0,1\right] $, then $\lambda A+\mu B$ belongs to $\mathcal{B}$%
.
\end{lemma}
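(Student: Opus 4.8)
The plan is to deduce Lemma~\ref{Lemma: linear combinations in B} from Lemma~\ref{Lemma: transform linear combination} exactly as Lemma~\ref{Lemma: base case} was deduced from Lemmas~\ref{Lemma: base case computation 1} and~\ref{Lemma: base computation 2}, using Lemma~\ref{Lemma: inf} as the bridge. So fix $k,N\in\omega$ with $N\geq 1$; by Lemma~\ref{Lemma: inf} it suffices to produce an $N$-Lipschitz $\mathcal{L}_{\omega_1\omega}$-formula $\varphi(\bar x)$ with $k$ free variables such that
\begin{equation*}
\inf\nolimits_{g\in\Iso(\mathbb{U})}^{\ast}[(\lambda A+\mu B)(gM)+Nd(g^{-1}\mathbf{p}_{|k},u)]=\varphi^M(u)
\end{equation*}
for all $M$ and $u$.

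First I would unpack what membership of $A,B$ in $\mathcal{B}$ gives us. For every $\widetilde k\geq k$ and $\widetilde N\geq N$ there are $\widetilde N$-Lipschitz formulas $\alpha_{\widetilde k,\widetilde N}(\bar y)$ and $\beta_{\widetilde k,\widetilde N}(\bar y)$ in $\widetilde k$ free variables such that $\sup_{g}^{\ast}[A(gM)-\widetilde N d(\bar y,g^{-1}\mathbf{p}_{|\widetilde k})]=\alpha_{\widetilde k,\widetilde N}^M(\bar y)$ and similarly for $B$. Then, mirroring the construction in Lemma~\ref{Lemma: inf}, I would set
\begin{equation*}
\varphi(\bar x)=\inf_{\widetilde N\geq N}\;\inf_{\widetilde k\geq k}\;\sup_{m\geq 1}\;\inf_{y_0,\dots,y_{\widetilde k-1}}\bigl[m\tau_{\widetilde k}(\bar y,\mathbf{p}_{|\widetilde k})+Nd(\bar y_{|k},\bar x)+\lambda\,\alpha_{\widetilde k,\widetilde N}(\bar y)+\mu\,\beta_{\widetilde k,\widetilde N}(\bar y)\bigr].
\end{equation*}
I must check this is a legitimate $\mathcal{L}_{\omega_1\omega}$-formula and that it is $N$-Lipschitz: the inner term is $N$-Lipschitz in $\bar x$ because only the $Nd(\bar y_{|k},\bar x)$ summand involves $\bar x$ and it is $N$-Lipschitz there, while the outer $\inf$'s and $\sup$ over $m$ preserve both the $N$-Lipschitz bound and uniform boundedness of the moduli (as in the displayed formulas of Lemma~\ref{Lemma: inf} and Lemma~\ref{Lemma: base case}). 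Then the identity $\varphi^M(u)=\inf_g^{\ast}[(\lambda A+\mu B)(gM)+Nd(g^{-1}\mathbf{p}_{|k},u)]$ follows by chasing the equivalence of clauses (1), (2), (3) in Lemma~\ref{Lemma: transform linear combination}: clause (3) is precisely the statement that the value of $\varphi^M(u)$ is $<t$ (after substituting the definitions of $\alpha,\beta$ into the $\sup^{\ast}$ terms and using Proposition~\ref{Proposition: grey quantifiers}(3) to commute the $\sup_m$ past the relevant quantifiers), and clause (1) is the left-hand side being $<t$.

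The only real work is bookkeeping: verifying that substituting the formulas $\alpha_{\widetilde k,\widetilde N},\beta_{\widetilde k,\widetilde N}$ for the $\sup_g^{\ast}$-expressions inside clause (3) of Lemma~\ref{Lemma: transform linear combination} turns that clause into ``$\varphi^M(u)<t$'' verbatim, and that the $\inf_{\widetilde N}\inf_{\widetilde k}\sup_m\inf_{\bar y}$ arrangement correctly realizes the ``there are $\widetilde k,\widetilde N$ such that for every $m$ there is $\widetilde u$'' quantifier pattern. Since this is entirely parallel to the argument already given for Lemma~\ref{Lemma: base case} — with the single quantifier-free term $\phi(\bar y_{|n})$ replaced by the sum $\lambda\,\alpha_{\widetilde k,\widetilde N}(\bar y)+\mu\,\beta_{\widetilde k,\widetilde N}(\bar y)$ of two already-handled formulas, and with the extra outer $\inf_{\widetilde N\geq N}$ and $\inf_{\widetilde k\geq k}$ as in Lemma~\ref{Lemma: inf} — I expect no genuine obstacle, and I would simply remark that the proof is analogous to those of Lemma~\ref{Lemma: inf} and Lemma~\ref{Lemma: base case}, citing Lemma~\ref{Lemma: transform linear combination} for the key computation.
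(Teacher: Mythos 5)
Your proposal is correct and follows essentially the same route as the paper: it fixes $k,N$, invokes the definition of $\mathcal{B}$ to get the $\widetilde{N}$-Lipschitz formulas for $A$ and $B$, defines the very same formula $\inf_{\widetilde{N}\geq N}\inf_{\widetilde{k}\geq k}\sup_{m\geq 1}\inf_{\bar{y}}[m\tau_{\widetilde{k}}(\bar{y},\mathbf{p}_{|\widetilde{k}})+Nd(\bar{y}_{|k},\bar{x})+\lambda\psi_{A}(\bar{y})+\mu\psi_{B}(\bar{y})]$, and concludes via Lemma~\ref{Lemma: transform linear combination} and Lemma~\ref{Lemma: inf}, exactly as the paper does.
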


\begin{proof}
Since $A,B\in \mathcal{B}$, for every $\widetilde{k},\widetilde{N}\in \omega 
$ with $\widetilde{N}\geq 1$ there are $\widetilde{N}$-Lipschitz formulas $%
\psi _{A,\widetilde{k},\widetilde{N}}$ and $\psi _{B,\widetilde{k},%
\widetilde{N}}$ in $\widetilde{k}$ free variables such that%
\begin{equation*}
\sup\nolimits_{g\in \Iso(\mathbb{U})}^{\ast }A[(gM)-\widetilde{N}d(%
\widetilde{u},g^{-1}\mathbf{p}_{|\widetilde{k}})]=\psi _{A,\widetilde{k},%
\widetilde{N}}^{M}(\widetilde{u})
\end{equation*}%
and similarly for $B$. Fix $k,N\in \omega $ with $N\geq 1$, and define the $%
N $-Lipschitz formula $\varphi \left( \bar{x}\right) $ in the $k$ free
variables $\bar{x}$ 
\begin{equation*}
\inf_{\widetilde{N}\geq N}\inf_{\widetilde{k}\geq k}\sup_{m\geq
1}\inf_{y_{0},\ldots ,y_{\widetilde{k}-1}}[m\tau _{\widetilde{k}}(\bar{y},%
\mathbf{p}_{|\widetilde{k}})+Nd(\bar{y}_{|k},\bar{x})+\lambda \psi _{A,%
\widetilde{k},\widetilde{N}}\left( \bar{y}\right) +\mu \psi _{B,\widetilde{k}%
,\widetilde{N}}\left( \bar{y}\right) ]\text{.}
\end{equation*}%
By Lemma \ref{Lemma: transform linear combination} for $M\in \Mod(\mathcal{L}%
,\mathbb{U})$ and $u\in \mathbb{U}^{k}$%
\begin{equation*}
\inf\nolimits_{g\in \Iso(\mathbb{U})}^{\ast }[\left( \lambda A+\mu B\right)
(gM)+Nd(g^{-1}\mathbf{p}_{|k},u)]=\varphi ^{M}(u)\text{.}
\end{equation*}%
In view of Lemma \ref{Lemma: inf} this concludes the proof that $\lambda
A+\mu B\in \mathcal{B}$.
\end{proof}

\subsection{Infima and suprema\label{Subsection: infima and suprema}}

\begin{lemma}
\label{Lemma: sup and inf in B}If $\left( A_{n}\right) _{n\in \omega }$ is a
sequence of grey subsets of $\Mod(\mathcal{L},\mathbb{U})$ that belong to $%
\mathcal{B}$, then $\inf_{n}A_{n}$ and $\sup_{n}A_{n}$ belong to $\mathcal{B}
$.
\end{lemma}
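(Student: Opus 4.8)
The goal is to show that $\mathcal{B}$ is closed under countable infima and suprema. First I would handle $\inf_n$. Since each $A_n\in\mathcal{B}$, by Lemma~\ref{Lemma: inf} for each pair $k,N\in\omega$ with $N\geq1$ there is an $N$-Lipschitz formula $\varphi_{n,k,N}$ with $k$ free variables witnessing
\begin{equation*}
\inf\nolimits_{g\in\Iso(\mathbb{U})}^{\ast}[A_n(gM)+Nd(g^{-1}\mathbf{p}_{|k},u)]=\varphi_{n,k,N}^{M}(u)\text{.}
\end{equation*}
The key observation is that infima commute with the starred infimum quantifier: by Proposition~\ref{Proposition: grey quantifiers}(3) we have $\inf_n\inf_{g}^{\ast}B_n=\inf_g^{\ast}\inf_n B_n$, and moreover
\begin{equation*}
\inf\nolimits_g^{\ast}[(\inf_n A_n)(gM)+Nd(g^{-1}\mathbf{p}_{|k},u)]=\inf_n\inf\nolimits_g^{\ast}[A_n(gM)+Nd(g^{-1}\mathbf{p}_{|k},u)]
\end{equation*}
since the additive term does not depend on $n$ and truncated addition is monotone and commutes with infima over a countable set. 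Hence the formula $\inf_n\varphi_{n,k,N}(\bar x)$ — which is a legitimate $\mathcal{L}_{\omega_1\omega}$-formula because all the $\varphi_{n,k,N}$ have the same $k$ free variables and the common bound $N$ on the moduli of continuity keeps the sequence of moduli uniformly bounded, so it is again $N$-Lipschitz — witnesses that $\inf_n A_n\in\mathcal{B}$ via Lemma~\ref{Lemma: inf}.

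For $\sup_n$, rather than repeating the argument with a $\sup$-transform (where the quantifiers would not commute so cleanly), I would exploit the negation closure already established. By Lemma~\ref{Lemma: negation in B}, each $A_n\in\mathcal{B}$ iff $1-A_n\in\mathcal{B}$; by the first part applied to the sequence $(1-A_n)_n$ we get $\inf_n(1-A_n)\in\mathcal{B}$; and since $\inf_n(1-A_n)=1-\sup_n A_n$, one more application of Lemma~\ref{Lemma: negation in B} gives $\sup_n A_n\in\mathcal{B}$. This reduces the supremum case entirely to the infimum case and the negation lemma.

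The main obstacle is purely bookkeeping: verifying that the commutation
$\inf\nolimits_g^{\ast}[(\inf_n A_n)(gM)+c]=\inf_n\inf\nolimits_g^{\ast}[A_n(gM)+c]$
is exactly an instance of Proposition~\ref{Proposition: grey quantifiers}(3) — one checks the level-set characterization, namely that $\bigl(\inf_g^{\ast}[(\inf_n A_n)(\cdot)+c]\bigr)(M,u)<r$ iff $\exists^{\ast}g$ with $(\inf_n A_n)(gM)+Nd(g^{-1}\mathbf{p}_{|k},u)<r$ iff $\exists^{\ast}g\,\exists n$ with $A_n(gM)+Nd(g^{-1}\mathbf{p}_{|k},u)<r$, and then swaps the order of $\exists^{\ast}g$ and $\exists n$ using that a countable union of comeager-in-some-open-set conditions is again of that form. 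I should also double-check that $\inf_n\varphi_{n,k,N}$ really satisfies the syntactic side condition from the definition of $\mathcal{L}_{\omega_1\omega}$ — finitely many free variables overall (here exactly $k$) and uniformly bounded moduli of continuity (here all bounded by $t\mapsto Nt$) — but both of these are immediate from the $N$-Lipschitz conclusion of Lemma~\ref{Lemma: inf}. No genuinely new idea beyond Proposition~\ref{Proposition: grey quantifiers} and the two preceding lemmas is needed.
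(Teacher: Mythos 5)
Your proposal is correct and is essentially the paper's argument in mirror image: the paper proves the $\sup_n$ case directly by commuting $\sup_n$ with the defining $\sup^{\ast}_{g}$-transform via Proposition~\ref{Proposition: grey quantifiers}(3) and then obtains $\inf_n$ from Lemma~\ref{Lemma: negation in B}, whereas you prove $\inf_n$ directly via the $\inf^{\ast}$-characterization of Lemma~\ref{Lemma: inf} and obtain $\sup_n$ by negation. Since both halves rest on the same commutation from Proposition~\ref{Proposition: grey quantifiers}(3), the uniform $N$-Lipschitz bound making $\inf_n\varphi_{n,k,N}$ (respectively $\sup_n\varphi_n$) a legitimate formula, and the negation lemma, the two arguments are interchangeable.
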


\begin{proof}
By Lemma \ref{Lemma: negation in B} it is enough to show that $%
\sup_{n}A_{n}\in \mathcal{B}$. Fix $k,N\in \omega $ with $N\geq 1$. For
every $n\in \omega $, since $A_{n}\in \mathcal{B}$ there is an $N$-Lipschitz
formula $\varphi _{n}$ such that for every $M\in \Mod(\mathcal{L},\mathbb{U}%
) $ and $u\in \mathbb{U}^{k}$%
\begin{equation*}
\sup\nolimits_{g\in \Iso(\mathbb{U})}^{\ast }[A_{n}(gM)-Nd(u,g^{-1}\mathbf{p}%
_{|k})]=\varphi _{n}^{M}(u)\text{.}
\end{equation*}%
It follows from Proposition \ref{Proposition: grey quantifiers}(3) that%
\begin{align*}
\sup\nolimits_{g\in \Iso(\mathbb{U})}^{\ast }[\sup_{n}A_{n}(gM)-Nd(u,g^{-1}%
\mathbf{p}_{|k})]& =\sup_{n}\sup\nolimits_{g\in \Iso(\mathbb{U})}^{\ast
}[A_{n}(gM)-Nd(u,g^{-1}\mathbf{p}_{|k})] \\
& =\left( \sup\nolimits_{n}\varphi _{n}\right) ^{M}(u)\text{.}
\end{align*}%
Since $\sup_{n}\varphi _{n}$ is an $N$-Lipschitz formula, this shows that $%
\sup_{n}A_{n}\in \mathcal{B}$.
\end{proof}

\bibliographystyle{amsplain}
\bibliography{bib-lop-esc}

\end{document}